\newcommand{\cS}{{\mathcal{S}}}
\newcommand{\cX}{{\mathcal{X}}}
\newcommand{\cT}{{\mathcal{T}}}
\newcommand{\cD}{{\mathcal{D}}}
\newcommand{\cL}{{\mathcal{L}}}
\newcommand{\cR}{{\mathcal{R}}}
\newcommand{\cA}{{\mathcal{A}}}
\newcommand{\cK}{{\mathcal{K}}}
\newcommand{\cB}{{\mathcal{B}}}
\newcommand{\A}{\mathbb{A}}
\newcommand{\R}{\mathbb{R}}
\newcommand{\C}{\mathbb{C}}
\newcommand{\ind}{\text{ind}\,}
\newcommand{\nul}{\text{nul}\,}
\renewcommand{\Re}{\textrm{\rm Re}\,} 
\renewcommand{\Im}{\textrm{\rm Im}\,}
\newcommand{\sgn}{\mathrm{sgn}\,}
\newcommand{\ess}{\sigma_\mathrm{\tiny{ess}}}
\newcommand{\ptsp}{\sigma_\mathrm{\tiny{pt}}}
\begin{document}

\title*{Spectral stability of traveling fronts for nonlinear hyperbolic equations of bistable type}

\author{Corrado Lattanzio, Corrado Mascia, Ram\'{o}n G. Plaza and Chiara Simeoni}
\institute{Corrado Lattanzio \at Dipartimento di Ingegneria e Scienze dell'Informazione e Matematica, Universit\`a degli Studi 
dell'Aquila, via Vetoio, Coppito I-67100, L'Aquila (Italy), \email{corrado@univaq.it}
\and Corrado Mascia \at Dipartimento di Matematica ``Guido Castelnuovo'', Sapienza Universit\`a di Roma, P.le Aldo
Moro 2, I-00185 Roma (Italy), \email{mascia@mat.uniroma1.it}
\and Ram\'{o}n G. Plaza \at Instituto de Investigaciones en Matem\'aticas Aplicadas y en Sistemas, Universidad Nacional 
Aut\'onoma de M\'exico, Circuito Escolar s/n C.P. 04510 Cd.\ de M\'exico (Mexico), \email{plaza@mym.iimas.unam.mx}
\and Chiara Simeoni \at Laboratoire J.A. Dieudonn\'e UMR CNRS 7351, Universit\'e de Nice Sophia-Antipolis, Parc Valrose 
06108 Nice Cedex 02 (France), \email{simeoni@unice.fr}
}

\maketitle

\abstract*{This paper addresses the existence and spectral stability of traveling fronts for nonlinear hyperbolic equations with a positive ``damping" term and a reaction function of bistable type. Particular cases include the relaxed Allen-Cahn equation and the nonlinear version of the telegrapher's equation with bistable reaction term. The existence theory of the fronts is revisited, yielding useful properties such as exponential decay to the asymptotic rest states and a variational formula for the unique wave speed. The spectral problem associated to the linearized equation around the front is established. It is shown that the spectrum of the perturbation problem is stable, that is, it is located in the complex half plane with negative real part, with the exception of the eigenvalue zero associated to translation invariance, which is isolated and simple. In this fashion, it is shown that there exists an \textit{spectral gap} precluding the accumulation of essential spectrum near the origin. To show that the point spectrum is stable we introduce a transformation of the eigenfunctions that allows to employ energy estimates in the frequency regime. This method produces a new proof of equivalent results for the relaxed Allen-Cahn case and extends the former to a wider class of equations. This result is a first step in a more general program pertaining to the nonlinear stability of the fronts under small perturbations, a problem which remains open.}

\abstract{This paper addresses the existence and spectral stability of traveling fronts for nonlinear hyperbolic equations with a positive ``damping" term and a reaction function of bistable type. Particular cases of the former include the relaxed Allen-Cahn equation and the nonlinear version of the telegrapher's equation with bistable reaction term. The existence theory of the fronts is revisited, yielding useful properties such as exponential decay to the asymptotic rest states and a variational formula for the unique wave speed. The spectral problem associated to the linearized equation around the front is established. It is shown that the spectrum of the perturbation problem is stable, that is, it is located in the complex half plane with negative real part, with the exception of the eigenvalue zero associated to translation invariance, which is isolated and simple. In this fashion, it is shown that there exists an \textit{spectral gap} precluding the accumulation of essential spectrum near the origin. To show that the point spectrum is stable we introduce a transformation of the eigenfunctions that allows to employ energy estimates in the frequency regime. This method produces a new proof of equivalent results for the relaxed Allen-Cahn case and extends the former to a wider class of equations. This result is a first step in a more general program pertaining to the nonlinear stability of the fronts under small perturbations, a problem which remains open.}


\section{Introduction}

This paper studies the stability of traveling wave solutions to scalar hyperbolic equations of the form
\begin{equation}
\label{hypAC}
\tau u_{tt} + g(u,\tau) u_t = u_{xx} + f(u),
\end{equation}
where $u$ is a scalar, $x \in \R$, $t > 0$, and $\tau \geq 0$ is a constant. Note that \eqref{hypAC} is a nonlinear wave equation with a ``damping term", $g$, and a nonlinear reaction term $f$. Hyperbolic equations of this form often support traveling wave solutions, also called traveling fronts, which are special solutions describing coherent structures which propagate along a particular direction with a certain wave speed. In a previous contribution \cite{LMPS16}, we analyzed the existence and stability of propagating fronts for a one-dimensional model which is a particular case of equation \eqref{hypAC}, called the \textit{Allen-Cahn equation with relaxation}. The motivation for the present study is to explore both the existence and the stability of such configurations for a wider class of equations, which arises in other contexts.

We make the following assumptions. First, the reaction function $f : 
\R \to \R$ is supposed to be of \textit{bistable type}\footnote{also called of Nagumo \cite{NAY62,McKe70}, or 
Allen-Cahn \cite{AlCa79} type.}, that is, $f \in C^2([0,1];\R)$ has two stable 
equilibria at $u=0, u=1$, and one unstable equilibrium point at $u = \alpha \in (0,1)$, more precisely,
\begin{equation}
\label{H1}
\tag{H1}
	\begin{aligned}
	&f(0)=f(\alpha)=f(1)=0,
		&\qquad &f'(0), f'(1)<0,\quad f'(\alpha)>0,\\
	&f(u)>0\textrm{ for all } \, u \in(\alpha,1),
		&\qquad &f(u)<0\textrm{ for all } \, u \in (0,\alpha),
	\end{aligned}
\end{equation}
for a certain $\alpha \in (0,1)$. A well-known example is the widely used cubic polynomial
\begin{equation}
\label{cubicf}
	f(u)= u(1-u)(u-\alpha),
\end{equation}
with $\alpha \in (0,1)$.

Reaction functions of bistable type arise in many models of natural phenomena, such as kinetics of 
biomolecular reactions (cf. Mikha{\u\i}lov \cite{Mik94}), nerve conduction (see, e.g., 
Lieberstein \cite{Lbr67a}, McKean \cite{McKe70}) and electrothermal instability (cf. Iz\'us \textit{et al.} 
\cite{IDRWZB95}). In terms of continuous descriptions of the spread of biological populations, it is 
often applied to kinetics exhibiting positive growth rate for population densities over a threshold 
value ($u > \alpha$), and decay for densities below such value ($u < \alpha$). The latter is often 
described as the \textit{Allee effect}, in which aggregation can improve the survival rate of 
individuals (see Murray \cite{MurI3ed}).

Secondly, we are going to assume that the damping coefficient $g = g(u,\tau)$ in equation 
\eqref{hypAC} is regular enough and strictly positive. More precisely, we suppose that for some fixed value 
$\tau_m>0$, there holds
\begin{equation}
\label{H2}
\tag{H2}
g \in C^1(\R \times [0, \tau_m]), \;\; \; \text{and,} \quad \inf\left\{g(u, \tau) : u\in\R, 
\tau\in (0,\tau_m)\right\}\geq \delta_0>0,
\end{equation}
for some $\delta_0 > 0$ independent of $\tau_m$.

Assumption \eqref{H2} is an extension of the previously studied case of the Allen-Cahn model with relaxation 
\cite{LMPS16}, where
\begin{equation}
 \label{ACrelax}
 g(u,\tau) = 1 - \tau f'(u),
\end{equation}
and with $\tau > 0$ bounded above by the characteristic relaxation time associated to the reaction,
\[
 0 \leq \tau < \tau_m := \frac{1}{\max_{u \in [0,1]} |f'(u)|}.
\]
for which, clearly, $g(u,\tau) > 0$. If $F$ is an antiderivative such that $F'=-f$ with $F(0)=0$, that is,
\begin{equation*}
	F(u):=-\int_{0}^{u} f(v)\,dv,
\end{equation*}
then $F$ can be interpreted as the Allen-Cahn two-well potential (see Figure \ref{figpotAC}).

\begin{figure}[htb]
\begin{center}
\includegraphics[width=5.75cm]{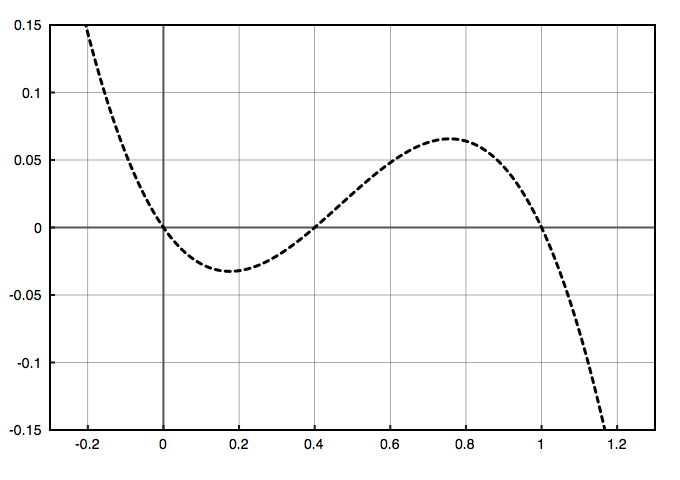}
\includegraphics[width=5.75cm]{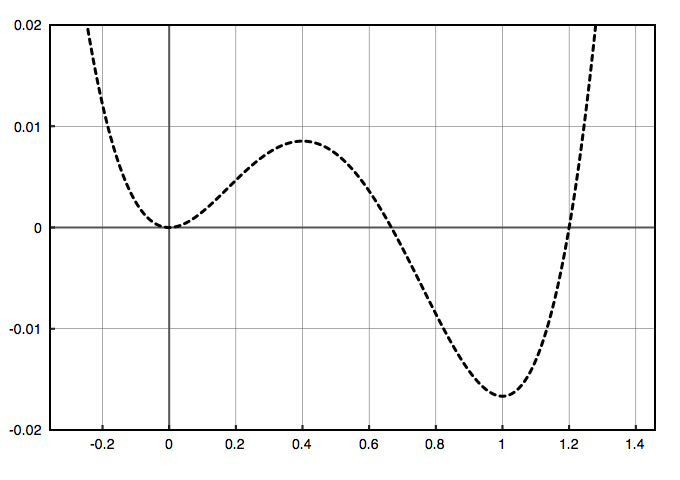} 
\end{center}
\caption{\footnotesize The bistable cubic function $f(u)=u(1-u)(u-0.4)$ (left)
and the corresponding two-well potential $F$ (right).\label{figpotAC}}
\end{figure}

Another example of interest is the \textit{nonlinear telegrapher's 
equation} \cite{Holm1}, where
\begin{equation}
 \label{telegr}
 g(u,\tau) \equiv 1,
\end{equation}
for all $u \in \R$ and $\tau \geq 0$.

\begin{remark}
There exist situations where the appearance of a diffusion coefficient $\varepsilon > 0$ in \eqref{hypAC},
\[
 \tau u_{tt} + g(u,\tau) u_t = \varepsilon u_{xx} + f(u),
\]
is important, for example, in the study of slow motion of solutions or their \textit{metastability} 
\cite{FLM17,Fol17}, when $0 < \varepsilon \ll 1$ is supposed to be small. For the problem of existence and stability of fronts, however, 
the size of 
$\varepsilon$ plays no role, and by rescaling the space variable, $x\mapsto x/\varepsilon$, we recover 
equation 
\eqref{hypAC}. Therefore, our analysis also applies to the more general model with arbitrary (constant) 
diffusion and we can work with equation \eqref{hypAC} directly without loss of generality.
\end{remark}

In this paper we establish the spectral stability of traveling fronts for \eqref{hypAC} under the sole structural assumptions \eqref{H1} and \eqref{H2}, which include many models in population dynamics, microstructures and relaxation mechanisms, among others. In Section \ref{secexist} we prove that traveling fronts exist and provide some of their more important features and properties. Section \ref{secperturb} contains the perturbation problem and describes how to formulate a natural spectral problem (after linearization of the equation around the front), whose analysis encodes the most fundamental stability properties. We show that there exists two different but equivalent ways to formulate the spectral problem. In Section \ref{secess} we analyze the asymptotic systems associated to the perturbed equations and locate the essential spectrum. Section \ref{secptsp} contains the proof that the point spectrum is stable (via energy estimates in the frequency regime), the simplicity of the eigenvalue zero associated to translation, as well as the statement of our main result (see Theorem \ref{mainthm}). Finally, in section \ref{secdisc} we make some concluding remarks.

\section{Structure of traveling fronts}
\label{secexist}

In this section we review the existence theory and structural properties of front solutions to equations of 
the form \eqref{hypAC}. In a recent contribution, Gilding and Kersner \cite{GiKe15} established the necessary 
and sufficient conditions for the existence of traveling wave solutions to equation \eqref{hypAC} with 
reaction function of bistable type under the assumption of positive damping $g > 0$. The authors make use of an 
integral equation approach. For completeness, in this section we present an existence result which applies a 
different technique based on the computation of the index of a rotating vector field of the dynamical system  
with respect to the velocity (in the sense of Perko \cite{Per1}); this proof resembles our previous analysis 
in the particular case of the relaxed Allen-Cahn model \cite{LMPS16}. With this approach we are able to 
derive further structural properties, such as the exponential decay of the solutions and a variational formula for the 
(unique) wave speed, which are not available from the integral formulation in \cite{GiKe15}.

\subsection{Existence}
\label{sect:existence}

We look for solutions to \eqref{hypAC} of the form
\begin{equation*}
	u(x,t)=U(\xi)\quad\textrm{with}\quad\xi=x-ct,
	\qquad\textrm{and}\qquad
	U(-\infty)=0,\quad
	U(+\infty)=1.
\end{equation*}
Substituting into \eqref{hypAC}, we obtain the equation
\begin{equation}
\label{twode0}
	(1-c^2\tau)U''+c\,g(U,\tau)U'+f(U)=0,
\end{equation}
where $' \, := d/d\xi$.

\begin{proposition}\label{prop:properties}
Let assumptions \eqref{H1} and \eqref{H2} be satisfied,
and let $U=U(\xi)$ be a solution to \eqref{twode0} together with the asymptotic
conditions $U(-\infty)=0$ and $U(+\infty)=1$. Then,
	\par
\textit{(i)} (speed sign) the velocity $c$ has the same sign of $-\int_{0}^{1} f(u)\,du$;\par
\textit{(ii)} (subcharacteristic condition) the velocity $c$ necessarily satisfies
\begin{equation}
 \label{subchar}
c^2\tau < 1.
\end{equation}
\end{proposition}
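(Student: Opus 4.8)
My plan is to establish the two assertions in the reverse order, deriving the subcharacteristic condition \textit{(ii)} first, because it yields the hyperbolicity of the rest points (and hence the exponential decay of $U$) needed to control the boundary terms in the energy identity behind \textit{(i)}. For \textit{(ii)} I would argue by contradiction: assume $c^2\tau\ge1$. If $c^2\tau=1$ then $c\neq0$ and \eqref{twode0} degenerates to the scalar first-order autonomous equation $U'=-f(U)/\bigl(c\,g(U,\tau)\bigr)=:h(U)$, with $h\in C^1([0,1])$ because $g\ge\delta_0>0$ by \eqref{H2}; since $h(\alpha)=0$ the constant $\alpha$ solves it, so by uniqueness no orbit crosses the level $U=\alpha$, contradicting $U(-\infty)=0<\alpha<1=U(+\infty)$ together with the intermediate value theorem. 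If $a:=1-c^2\tau<0$ I would pass to the planar system $U'=V$, $aV'=-c\,g(U,\tau)V-f(U)$ and linearize at the rest points $(0,0)$ and $(1,0)$: the Jacobian there has determinant $f'(0)/a>0$, resp.\ $f'(1)/a>0$ (using $f'(0),f'(1)<0$ from \eqref{H1}), so neither is a saddle, and trace $-c\,g(\cdot,\tau)/a$ of the sign of $c$ (using $g>0$), so both are hyperbolic nodes or foci of the \emph{same} stability type — two repellers if $c>0$, two attractors if $c<0$ ($c=0$ being excluded since $a<0$). A heteroclinic orbit from $(0,0)$ at $\xi=-\infty$ to $(1,0)$ at $\xi=+\infty$ requires a nontrivial local unstable manifold at $(0,0)$ and a nontrivial local stable manifold at $(1,0)$; two repellers make the second trivial, two attractors make the first trivial, so no such orbit exists, contradicting the existence of $U$. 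Hence $c^2\tau<1$.

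Now $a=1-c^2\tau>0$, so at $(0,0)$ and $(1,0)$ the Jacobian determinants $f'(0)/a$ and $f'(1)/a$ are negative: both rest points are hyperbolic saddles, and a standard stable/unstable manifold argument gives that $U$ approaches its limits exponentially, in particular $U'(\xi)\to0$ as $\xi\to\pm\infty$ with $U'\in L^2(\R)$. For \textit{(i)} I multiply \eqref{twode0} by $U'$ and integrate over $\R$: the term $(1-c^2\tau)\int_\R U''U'\,d\xi=\tfrac12(1-c^2\tau)\bigl[(U')^2\bigr]_{-\infty}^{+\infty}$ vanishes by this decay, while $\int_\R f(U)U'\,d\xi=\bigl[\int_0^{U(\xi)}f(v)\,dv\bigr]_{-\infty}^{+\infty}=\int_0^1 f(v)\,dv$. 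What remains is
\[
 c\int_\R g(U(\xi),\tau)\,U'(\xi)^2\,d\xi=-\int_0^1 f(v)\,dv .
\]
By \eqref{H2} we have $g(U(\xi),\tau)\ge\delta_0>0$, and $U'\not\equiv0$ since $U$ is nonconstant, so the integral on the left is strictly positive and finite; therefore $\sgn(c)=\sgn\!\bigl(-\int_0^1 f(v)\,dv\bigr)$, which is \textit{(i)}.

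The routine preliminary — that a bounded solution with prescribed finite limits at $\pm\infty$ really does satisfy $(U(\xi),U'(\xi))\to(0,0)$ and $(1,0)$ — I would dispatch by the usual Barbalat-type argument from the equation once $U'$ is known to be bounded, or by simply reading $U$ off the phase plane as an orbit between rest points. The one genuinely delicate point is the planar dynamics in the case $a<0$: the impossibility of joining two equilibria of the same stability type by a heteroclinic. This is classical (it follows from the Hartman--Grobman theorem, or from a Lyapunov function for the linearization), but it must be invoked carefully, since — contrary to the loose slogan ``nothing leaves an attractor or reaches a repeller'' — the actual obstruction is to the \emph{forward} limit at $(1,0)$ when $c>0$ and to the \emph{backward} limit at $(0,0)$ when $c<0$.
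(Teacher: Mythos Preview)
Your proof is correct. Your treatment of (i) matches the paper's exactly---multiply \eqref{twode0} by $U'$, integrate, and read off the sign from the positivity of $\int g\,|U'|^2$---with the added care that you explain why the boundary term $(1-c^2\tau)\bigl[|U'|^2\bigr]_{-\infty}^{+\infty}$ vanishes. Your argument for (ii), however, takes a genuinely different route. The paper does \emph{not} linearize at the rest points; instead it rewrites $U'$ times the equation as
\[
\Bigl(\tfrac12(1-c^2\tau)|U'|^2 - F(U)\Bigr)' + c\,g(U,\tau)|U'|^2 = 0,
\]
integrates from a large $\xi$ (chosen so that $U(\xi)\in(\alpha,1)$) to $+\infty$, and observes that both $F(U(\xi))-F(1)=\int_{U(\xi)}^{1}f>0$ and the residual integral are strictly positive when $c>0$, forcing $\tfrac12(1-c^2\tau)|U'(\xi)|^2>0$. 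Your approach---treating the degenerate case $c^2\tau=1$ as a scalar first-order ODE whose orbits cannot cross the equilibrium $\alpha$, and for $c^2\tau>1$ computing trace and determinant of the Jacobian to see that both endpoints are nodes/foci of the \emph{same} stability type, hence cannot be heteroclinically joined---is more structural and anticipates the saddle picture you then exploit for exponential decay in (i). The paper's energy argument is shorter and handles $c^2\tau\ge1$ uniformly in a single inequality; yours is closer in spirit to the phase-plane construction carried out later in Proposition~\ref{prop:auxode}. Both proofs tacitly rely on $U'\to0$ at infinity, which you at least flag as the ``routine preliminary'' and correctly attribute to a Barbalat-type argument.
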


\begin{proof} 
\smartqed 
(i) Multiplying equation \eqref{twode0} by $U'$ and integrating in $\R$, we obtain
\begin{equation*}
	c\int_{\R} g(U,\tau)\left|U'\right|^2\,dx=F(1)-F(0).
\end{equation*}
where $F'=-f$. Thus, $\mathrm{sgn}(c) = \mathrm{sgn}(F(1) - F(0))$, as $g(U,\tau) > 0$.

(ii) The case $c = 0$ is manifest. If $c > 0$ then multiply equation \eqref{twode0} by $U'$. This yields,
\[
 (1-c^2 \tau)U'' U' + cg(U,\tau)|U'|^2 + f(U) U' = 0.
\]
Since $f = - F'$ last equation is equivalent to
\begin{equation}
 \label{eqfive}
 \Big( \tfrac{1}{2}(1-c^2 \tau) |U'|^2 - F(U) \Big)' + cg(U,\tau) |U'|^2 = 0. 
\end{equation}
Integrate equation \eqref{eqfive} in $(\xi,+\infty)$, to 
obtain
\begin{equation}
\label{launo}
\tfrac{1}{2}(1-c^2 \tau) |U'(\xi)|^2 = F(U(\xi)) - F(1) + c  \int_{\xi}^{+\infty} g(U(s),\tau)|U'(s)|^2 
\, ds,
\end{equation}
and choose $\xi \gg 1$, large enough so that $U(\xi) \in (\alpha, 1)$ (as $U(+\infty) = 1$).
Since $f(u) > 0$ for $u \in (\alpha, 1)$ and $U(\xi) \in (\alpha, 1)$, clearly
\[
F(U(\xi)) - F(1) = \int_{U(\xi)}^1 f(s) \, ds > 0.
\]
Since we are assuming $c > 0$ and since $g(U,\tau) > 0$, clearly the right hand side of \eqref{launo} is positive, yielding $1 > c^2 
\tau$. The case $c<0$ can be treated similarly.
\qed \end{proof}

\begin{remark}
Notice that if $F(0)=F(1)$, then the speed $c$ is necessarily zero and the equation for the profile reduces
to the one for traveling waves for the parabolic Allen-Cahn equation.
\end{remark}

We now prove an auxiliary result.

\begin{proposition}\label{prop:auxode}
Let assumptions \eqref{H1} - \eqref{H2} be satisfied.
Then there exists a unique value $\gamma\in\R$, denoted by $\gamma_\ast=\gamma_\ast(\tau)$,
such that the equation
\begin{equation}\label{auxode}
	V''+\gamma\,g(V,\tau)V'+f(V)=0
\end{equation}
has a monotone increasing solution, $V=V(\xi)$ with asymptotic limits $V(-\infty)=0$ and $V(+\infty)=1$.
\end{proposition}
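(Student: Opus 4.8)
The plan is to repeat the phase-plane shooting argument of the relaxed Allen-Cahn case, now using $\gamma$ as the shooting parameter. Setting $V'=W$, equation \eqref{auxode} becomes the planar system $V'=W$, $W'=-\gamma\,g(V,\tau)W-f(V)$, whose equilibria are $(0,0),(\alpha,0),(1,0)$. By \eqref{H1} the eigenvalues at $(0,0)$ and $(1,0)$ solve $\lambda^2+\gamma g(0,\tau)\lambda+f'(0)=0$ and $\lambda^2+\gamma g(1,\tau)\lambda+f'(1)=0$ with $f'(0),f'(1)<0$; since the respective constant terms are negative, both points are hyperbolic saddles for every $\gamma$, whereas $(\alpha,0)$ is a center, focus or node depending on $\gamma$ and $\tau$ (as $f'(\alpha)>0$). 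A monotone increasing solution of \eqref{auxode} with $V(-\infty)=0$, $V(+\infty)=1$ is precisely a heteroclinic orbit from $(0,0)$ to $(1,0)$ inside the strip $\{0\le V\le 1,\ W\ge 0\}$, and monotonicity is automatic along it because $V'=W\ge 0$.

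First I would track the branch of the unstable manifold of $(0,0)$ entering $\{V>0,\ W>0\}$ and write it, while $W>0$, as a graph $W=W(V;\gamma)$ solving $\tfrac{dW}{dV}=-\gamma\,g(V,\tau)-f(V)/W$ with $W(V;\gamma)\sim\mu(\gamma)V$ as $V\to0^+$, where $\mu(\gamma)=\half\big(-\gamma g(0,\tau)+\sqrt{\gamma^2 g(0,\tau)^2-4f'(0)}\,\big)>0$. For each $\gamma$ exactly one alternative holds: the orbit reaches $V=1$ with $W>0$ (``overshoot''); it stays in $\{W>0\}$ and converges to $(1,0)$ (``connection''); or it fails to reach $V=1$, i.e.\ either $W\to0$ at some $V_0\in(\alpha,1)$ or the orbit converges to $(\alpha,0)$ (``undershoot''). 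The energy identity $E'=-\gamma g(V,\tau)W^2$ for $E:=\half W^2-F(V)$ shows the orbit can never return to $W=0$ where $f\le 0$, so no other possibility occurs.

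The heart of the matter is the monotone dependence on $\gamma$, a rotating-vector-field phenomenon (cf.\ Perko \cite{Per1}): $\mu(\gamma)$ is strictly decreasing in $\gamma$, so for $\gamma_1<\gamma_2$ the graph $W(\cdot;\gamma_1)$ starts strictly above $W(\cdot;\gamma_2)$; and they cannot cross, since at any point $V$ with $W(V;\gamma_1)=W(V;\gamma_2)=\bar W>0$ one has $\tfrac{dW}{dV}(V;\gamma_1)-\tfrac{dW}{dV}(V;\gamma_2)=(\gamma_2-\gamma_1)g(V,\tau)>0$ by \eqref{H2}, which is incompatible with the upper graph decreasing to meet the lower one. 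Hence $W(V;\gamma)$ is strictly decreasing in $\gamma$ wherever defined; the overshoot set and the undershoot set are open and disjoint, and by monotonicity they have the form $(-\infty,a)$ and $(b,+\infty)$ with $a\le b$, so the connection set is $[a,b]$. Finally, two distinct connecting values would have their non-crossing, monotonically ordered graphs both tend to $0$ as $V\to1^-$, contradicting the slope inequality above near $V=1$ where $f>0$; thus $a=b=:\gamma_\ast$ is unique.

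It remains to show both of the monotone sets are nonempty, which is exactly where \eqref{H2} ($g\ge\delta_0>0$) enters. For $\gamma\to-\infty$, from $\tfrac{dW}{dV}\ge\delta_0|\gamma|-f(V)/W$ one bounds $W(\cdot;\gamma)$ below by a function positive on all of $[0,1]$, so the orbit overshoots; for $\gamma\to+\infty$, using $\mu(\gamma)\to0^+$ together with $\tfrac{d}{dV}(W^2)=-2\gamma g(V,\tau)W-2f(V)\le-2\delta_0\gamma W+2\max_{[0,1]}|f|$ one forces $W$ to $0$ before $V=1$, so the orbit undershoots. Since $\R$ is connected the value $\gamma_\ast=\gamma_\ast(\tau)$ exists, its orbit is the desired monotone heteroclinic, and hyperbolicity of the saddles $(0,0),(1,0)$ makes it global in $\xi$ with exponential convergence to $0$ and $1$. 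I expect the main obstacle to be the rigorous bookkeeping near the degenerate endpoint $V=0$, where the equation for $W(V;\gamma)$ is singular, so that the initial condition, the continuous and monotone dependence on $\gamma$, and the non-crossing property must be extracted from the (un)stable manifold theorem in the $\xi$ variable; a secondary technicality is controlling orbits near the middle rest state $(\alpha,0)$, whose type varies with $\gamma$ and $\tau$, which one handles by working in $\xi$ near the equilibria and in $V$ away from them.
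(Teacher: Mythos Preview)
Your argument is correct and follows essentially the same phase-plane shooting strategy as the paper: write \eqref{auxode} as a planar system, identify $(0,0)$ and $(1,0)$ as saddles, and use the monotone (rotated-vector-field) dependence of the unstable/stable manifolds on $\gamma$ to locate a unique connecting value. The only implementation difference is that the paper shoots from \emph{both} ends, recording the heights $W_0(\gamma)$ and $W_1(\gamma)$ at which the unstable manifold of $(0,0)$ and the stable manifold of $(1,0)$ cross $V=\alpha$, and then finds the unique zero of the strictly monotone function $h=W_1-W_0$; you instead shoot one-sidedly from $(0,0)$ and partition $\R$ into overshoot/undershoot sets. Both variants rest on exactly the same monotonicity ingredient you prove by hand (and which the paper cites as Perko's rotated-vector-field theorem), so the arguments are equivalent; the two-sided version has the minor advantage that it sidesteps any analysis near the middle rest state $(\alpha,0)$ you flag as a technicality.
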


The proof of Proposition \ref{prop:auxode} consists of showing that there exists a heteroclinic
connection between the singular points $(V,V')=(0,0)$ and $(V,V')=(1,0)$.
We follow a standard shooting argument starting from the local analysis near the asymptotic
states, and use the special dependence with respect to the parameter $\gamma$ to
show that there is a single value $\gamma_\ast$ for which there exists a connecting orbit.
The strategy closely resembles the one presented in H\"arterich and Mascia \cite{HaeMa1}.
For shortness, we drop the dependence of $g$ with respect to $\tau$.

\begin{proof}[of Proposition \ref{prop:auxode}]
\smartqed
The second order differential equation \eqref{auxode} can be rewritten as
\begin{equation}\label{firstorder}
	\left\{\begin{aligned}
		V'&=\Phi(V,W;\gamma):=W,\\
		W'&=\Psi(V,W;\gamma):=-f(V)-\gamma\,g(V)\,W,
	\end{aligned}\right.
\end{equation}
possessing the two singular points $(0,0)$ and $(1,0)$.

1. Linearizing at $(\bar u,0)$, we obtain the matrix 
\begin{equation*}
	\begin{pmatrix}
		\partial_V \Phi	&	&\partial_W \Phi\\
		\partial_V \Psi	&	&\partial_W \Psi
		\end{pmatrix}
	=\begin{pmatrix}
		0		&	&1\\
		-f'(\bar u)	& & \, -\gamma\,g(\bar u)
		\end{pmatrix}.
\end{equation*}
In particular, since $f'(0)$ and $f'(1)$ are negative, $(0,0)$ and $(1,0)$
are saddles for \eqref{firstorder}.
The positive eigenvalue $\mu^+_0$ at $(0,0)$ and the negative eigenvalue
$\mu^-_1$  at $(1,0)$ are 
\begin{equation*}
	\begin{aligned}
		\mu^+_0&=\frac12\left(\sqrt{(\gamma\,g(0))^2-4f'(0)}-\gamma\,g(0)\right),
			\\
		\mu^-_1&=-\frac12\left(\sqrt{(\gamma\,g(1))^2-4f'(1)}+\gamma\,g(1)\right).
	\end{aligned}
\end{equation*}

We denote by $\mathcal{U}_0(\gamma)$ the intersection of the unstable manifold of $(0,0)$
and the set $\{(V,W)\,:\,W>0\}$, and by $\mathcal{S}_1(\gamma)$ the intersection of the
unstable manifold of $(1,0)$ and the set $\{(V,W)\,:\,W>0\}$.

2. Let $\gamma<0$ and $\hat W>M/c_0|\gamma|$, where $M:=\max\{f(u)\,:\,u\in(\alpha,1)\}$.
The solution trajectory passing through $(\alpha,W_0)$ is the graph of the solution
$\omega=\omega(V)$ to the Cauchy problem
\begin{equation}\label{trajeq}
	\frac{d\omega}{dV}=-\frac{f(V)}{\omega}-\gamma\,g(V),
\end{equation}
with initial condition $\omega(\alpha)=\hat W$.
Denote its interval of maximal existence by $I$, and observe that $\omega$ is strictly increasing in $I\cap 
[\alpha,1]$.
Indeed, since $d\omega/dV(\alpha)=-\gamma\,g(\alpha)>0$, the function $\omega$ is strictly
increasing for $V\in(\alpha,\alpha+\delta)$ for some $\delta>0$.
Moreover, if $\omega>\hat W$ and $V\in[\alpha,1]$ there holds
\begin{equation*}
	\frac{d\omega}{dV}\geq -\frac{M}{\hat W}+c_0|\gamma|>0,
\end{equation*}
and the claim follows from a standard continuation argument.
As a consequence, the derivative of $\omega$ is a priori bounded and 
the interval $I$ contains the interval $[\alpha,1]$.

Since the vector field $(\Phi,\Psi)$ point downward along the segment $(\alpha,1)\times\{0\}$,
the curve $\mathcal{S}_1(\gamma)$ intersect the line $V=\alpha$ at some value $W_1(\gamma)\geq 0$
for $\gamma<0$. 
Similar arguments show that $\mathcal{U}_0(\gamma)$ intersects the line
$V=\alpha$ at some $W_0(\gamma)$ for $\gamma>0$. 

3. Since 
\begin{equation*}
	\det\begin{pmatrix}
		\Phi				& &\Psi\\
		\partial_\gamma\Phi	& & \partial_\gamma \Psi
		\end{pmatrix}
	=\det\begin{pmatrix}
		W	& & \, -f-\gamma\,g\,W\\
		0	& & \, -g\,W
		\end{pmatrix}=-g\,W^2\leq -c_0 W^2\leq 0,
\end{equation*}
the vector field defining the differential system is a \textit{rotated vector field}
with respect to the parameter $\gamma$ (see Perko \cite{Per1}).
As a consequence, the graphs $\mathcal{U}_0(\gamma)$ and $\mathcal{S}_1(\gamma)$
rotate clockwise as the parameter $\gamma$ increases.
Therefore, the map $W_{0}=W_{0}(\gamma)$ is monotone decreasing in $(0,+\infty)$
and the map $W_{1}(\gamma)=W_{1}(\gamma)$ is monotone increasing in $(-\infty,0)$.

4. If $\bar V$ is a relative maximum point for a solution $\omega$ to \eqref{trajeq},
then
\begin{equation*}
	|\omega(\bar V)|=\frac{|f(\bar V)|}{|\gamma|\,g(\bar V)}\leq \frac{M}{c_0|\gamma|},
\end{equation*}
where $M$ is the maximum of $|f|$ in $(0,1)$.
Thus, $W_0(\gamma)\to 0$ as $\gamma\to+\infty$ 
and $W_1(\gamma)\to 0$ as $\gamma\to-\infty$.
Following Hadeler \cite{Had1}, let us note that one can also prove that there exist values
$\gamma_\pm$ with $\gamma_0<0<\gamma_1$ such that $W_0(\gamma_0)=0$ and
$W_1(\gamma_1)=0$.
Then, for any $\gamma\geq \gamma_0$ the trajectory $\mathcal{U}_0(\gamma)$
describes a heteroclinic connection between $0$ and $\alpha$;
similarly, for any $\gamma\leq \gamma_1$ the trajectory $\mathcal{S}_1(\gamma)$
describes a heteroclinic connection between $\alpha$ and $1$.

5. From monotonicity of $W_{0}$ and $W_{1}$, we infer that they both have limits as $\gamma\to 0$.
Additionally, the trajectory equation \eqref{trajeq} shows that such limiting values $W_0(0)$ and
$W_{1}(0)$ are finite and can be computed explictly, taking advantage of the conserved quantity
$W^2-2F(V)$, yielding
\begin{equation*}
	W_0(0)=\sqrt{2\bigl(F(\alpha)-F(0)\bigr)}
		\qquad\textrm{and}\qquad
	W_1(0)=\sqrt{2\bigl(F(\alpha)-F(1)\bigr)}.
\end{equation*}
Since the solution depends continuously with respect to the parameter $\gamma$, 
there exist $\gamma_0, \gamma_1$ with $-\infty\leq \gamma_0<0<\gamma_1\leq +\infty$,
such that $W_0$ is defined (and monotone decreasing) in $(\gamma_0,+\infty)$ and $W_1$
is defined  (and monotone increasing) in $(-\infty,\gamma_1)$.
If $\gamma_0$ is finite, $W_0\to+\infty$ as $\gamma\to\gamma_0^+$;
similarly, if $\gamma_1$ is finite, $W_1\to+\infty$ as $\gamma\to\gamma_1^-$.
\begin{figure}[htb]
\begin{center}
\includegraphics[width=9cm, height=6.5cm]{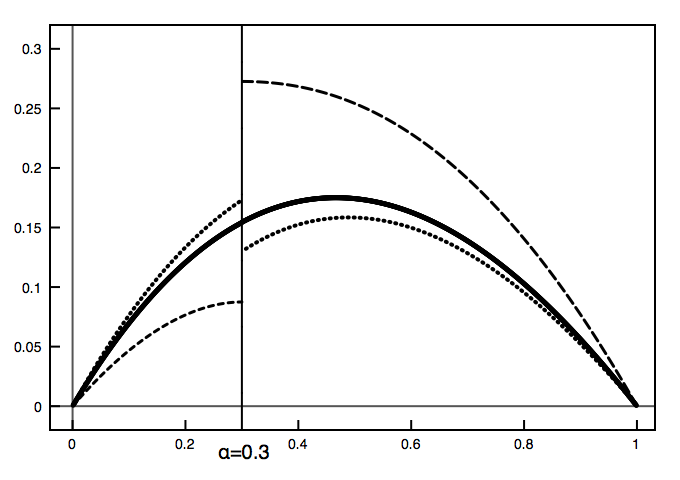}
\end{center}
\caption{\footnotesize Graphs of the curves $\mathcal{U}_0(\gamma)$ (left) and $\mathcal{S}_1(\gamma)$ (right) 
in the plane $(V,W)$ for different values of $\gamma$:
dashed line $\gamma=0$; dotted line $\gamma=-0.40$; and continuous line $\gamma=-0.32$. Here we considered the case of a cubic nonlinearity $f(u)=u(1-u)(u-\alpha)$ with $\alpha=0.3$, and damping term of Cattaneo-Maxwell type, $g(u;\tau)=1-\tau f'(u)$, where $\tau=1$.
}
\end{figure}

6. Let us consider the difference function $h:=W_1-W_0$ defined in $(\gamma_0,\gamma_1)$.
As a consequence of the properties of $W_0$ and $W_1$, we infer that $h$ is continuous,
monotone increasing and such that
\begin{equation*}
	\liminf_{\gamma\to\gamma_0^+} h(\gamma)<0,\quad
	\liminf_{\gamma\to\gamma_1^-} h(\gamma)>0.
\end{equation*}
In particular, there exists a unique $\gamma_\ast$ such that
$W_0(\gamma_\ast)=W_1(\gamma_\ast)$.
For such critical value, the conjuction of the curves $\mathcal{U}_0(\gamma_\ast)$ and
$\mathcal{S}_1(\gamma_\ast)$ gives the desired connection.
Uniqueness of the wave speed $\gamma_\ast$ follows from the monotonicity
of  the functions $W_0$ and $W_1$.
\qed \end{proof}

\begin{remark}
Equation \eqref{auxode} arises also in the case of  reaction-diffusion equations
with density-dependent diffusion
\begin{equation*}
w_t = \varphi(w)_{xx} + f(w),
\end{equation*}
where $\varphi$ is a strictly increasing function. Inserting the traveling wave profile \textit{ansatz} 
$w(x,t)=W(x-\gamma t)$ and setting $V:=\phi(W)$ yields
\begin{equation*}
	\frac{d^2V}{d\eta^2}+\gamma \psi'(V)\frac{dV}{d\eta}+f(\psi(V))=0,
\end{equation*}
where $\psi$ is the inverse function of $\phi$.
In fact, existence of heteroclinic solutions for \eqref{auxode} could be also proved by
appropriately changing the dependent variable $V$ 
and applying the general result proved by Engler \cite{Eng2} that relates the existence
of traveling wave solutions of reaction-diffusion equations with constant diffusion
coefficient to the ones of the density-dependent diffusion coefficient case. 
\end{remark}

\begin{example}
\label{exAllenCahn}
In the special case of a nonlinear telegrapher's equation with cubic reaction function, namely,
\begin{equation}\label{fbist_g1}
	g(u,\tau)=1,\qquad f(u)=\kappa\,u(1-u)(u-\alpha),
\end{equation}
we can look for $W=V'$ with the form $W(V) = AV(1-V)$, where $A$ is a constant to be determined.
Inserting in \eqref{auxode}, we deduce the following constraints on $A$ and $\gamma$
\begin{equation*}
	A^2+\gamma\,A-\kappa\,\alpha=0,\qquad 2A^2-\kappa=0,
\end{equation*}
giving the explicit formulas $A=\sqrt{\kappa/2}$ and
\begin{equation}\label{gast_fbist_g1}
	\gamma_\ast=\gamma_{{}_{\textrm{\tiny AC}}}:=\sqrt{\frac{2}{\kappa}}\left(\alpha-\frac12\right),
\end{equation}
which is the speed of propagation for the (parabolic) Allen--Cahn equation.
In the significant relaxation case $g(u,\tau)=1-\tau f'(u)$, the same simplification does not hold and
an analogous explicit formula for the critical speed $\gamma_\ast$ is not available. However, as in the case 
of the standard Allen--Cahn equation, it is possible to establish
a min-max variational characterization for the critical speed $\gamma_\ast$ (cf. Hamel \cite{Hame99}; see also 
\cite{MFF1}).
\end{example}

\begin{proposition}[variational formula for the speed]
\label{prop:minmax}
Let assumptions \eqref{H1} - \eqref{H2} be satisfied.
Set
\begin{equation*}
	\mathcal{W}:=\{W\in C^2(\R)\,:\, W(x)\in(0,1),\, W'(x)>0\;\,\text{for any}\, \;x\in\R\}.
\end{equation*}
Then the speed $\gamma_\ast$ defined in Proposition \ref{prop:auxode} is such that
\begin{equation}\label{minmax}
	\gamma_\ast=-\inf_{W\in \mathcal W} \sup_{x\in\R}\frac{W''+f(W)}{g(W)\,W'}
			=-\sup_{W\in \mathcal W} \inf_{x\in\R}\frac{W''+f(W)}{g(W)\,W'}.
\end{equation}
\end{proposition}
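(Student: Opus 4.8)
The plan is to establish \eqref{minmax} by exploiting the heteroclinic connection $V=V(\xi)$ constructed in Proposition \ref{prop:auxode}, which belongs to $\mathcal{W}$ and for which $V'' + \gamma_\ast g(V)V' + f(V)=0$ holds identically, so that the quotient $(V''+f(V))/(g(V)V')$ equals $-\gamma_\ast$ at \emph{every} point. This immediately gives
\[
	\sup_{x\in\R}\frac{V''+f(V)}{g(V)V'} = \inf_{x\in\R}\frac{V''+f(V)}{g(V)V'} = -\gamma_\ast,
\]
hence $-\inf_{W}\sup_x(\cdots) \geq \gamma_\ast$ and $-\sup_{W}\inf_x(\cdots) \leq \gamma_\ast$, giving both inequalities in the "easy" direction. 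Since trivially $\inf_W\sup_x \geq \sup_W\inf_x$, it only remains to prove the two matching reverse inequalities, namely that for \emph{every} $W\in\mathcal W$,
\[
	\sup_{x\in\R}\frac{W''+f(W)}{g(W)W'} \geq -\gamma_\ast
	\qquad\text{and}\qquad
	\inf_{x\in\R}\frac{W''+f(W)}{g(W)W'} \leq -\gamma_\ast.
\]

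For the first of these, I would argue by contradiction: suppose there is $W\in\mathcal W$ with $\sup_x (W''+f(W))/(g(W)W') < -\gamma_\ast$, i.e. $W'' + \gamma g(W)W' + f(W) < 0$ pointwise for some $\gamma$ slightly larger than $\gamma_\ast$ (by choosing $\gamma$ between $\gamma_\ast$ and minus the supremum). Then $W$ is a strict \emph{supersolution} of the traveling-wave equation \eqref{auxode} with parameter $\gamma > \gamma_\ast$. The idea is to compare $W$ with the genuine monotone front $V_\gamma$ for speed $\gamma$ — but here one must be careful, because Proposition \ref{prop:auxode} says a monotone front exists only for $\gamma=\gamma_\ast$. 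The cleaner route is a phase-plane argument: writing $W=W(\xi)$ and $w(V):=W'\circ (W^{-1})(V)>0$ as a function of $V\in(0,1)$, the supersolution inequality becomes $w\,w' + \gamma g(V)w + f(V) < 0$, i.e. $w' < -\gamma g(V) - f(V)/w$, meaning the curve $w(V)$ lies \emph{strictly below} the solution of the trajectory ODE \eqref{trajeq} with parameter $\gamma$ through any common point. Using the monotonicity and rotation properties of the curves $\mathcal U_0(\gamma)$ and $\mathcal S_1(\gamma)$ established in the proof of Proposition \ref{prop:auxode}, together with $\gamma>\gamma_\ast$, one shows this forces $w$ to either hit zero at an interior point of $(0,1)$ (contradicting $W'>0$) or fail to have the correct asymptotics as $V\to 0^+$ or $V\to 1^-$ — in either case contradicting $W\in\mathcal W$ being defined and positive on all of $\R$. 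The second reverse inequality is handled symmetrically, working with a \emph{subsolution} and a parameter $\gamma<\gamma_\ast$.

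An alternative, perhaps more transparent, packaging of the same comparison is the following "integrating factor" trick à la Hamel \cite{Hame99}: given $W\in\mathcal W$, set $\lambda(x):=\sup_{x}(W''+f(W))/(g(W)W')$; if $\lambda < -\gamma_\ast$ one integrates the inequality $W'' + (-\lambda)g(W)W' + f(W)\le 0$ against a suitable positive weight built from $V'$ (the derivative of the true front), say test with $V'\circ\Theta$ for an appropriate reparametrization $\Theta$, and derive a sign contradiction from the asymptotic behavior at $\pm\infty$, using the exponential decay of $V'$ and $W'$ at the rest states (which in turn follows from the saddle-point linearization in Proposition \ref{prop:auxode}). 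I expect the main obstacle to be precisely this comparison step — matching a generic competitor $W$ (which need not be a heteroclinic of any autonomous equation, only monotone with values in $(0,1)$) against the true front while keeping careful track of the boundary behavior at $V=0$ and $V=1$, where both $g(W)W'$ and $W''+f(W)$ degenerate; ensuring the quotient stays controlled there, and that the rotated-vector-field monotonicity genuinely rules out the supersolution, is where the real work lies. The sign convention (the overall minus sign in \eqref{minmax}) and the bookkeeping when $F(0)=F(1)$ so that $\gamma_\ast=0$ should be checked as easy special cases.
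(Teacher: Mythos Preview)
Your easy direction is exactly the paper's: plug in the true front $V$, observe that the quotient equals $-\gamma_\ast$ identically, and read off $\underline{\gamma}\le\gamma_\ast\le\overline{\gamma}$. For the hard direction you also reach the right starting point --- a competitor $W\in\mathcal W$ that is a strict supersolution of $U''+\gamma\,g(U)U'+f(U)=0$ for some $\gamma>\gamma_\ast$ --- but then you head into a direct phase-plane comparison with the curves $\mathcal U_0(\gamma)$, $\mathcal S_1(\gamma)$. As you yourself flag, this is delicate: a generic $W\in\mathcal W$ need not satisfy $W(-\infty)=0$, $W(+\infty)=1$, so the graph $w(V)=W'\circ W^{-1}$ lives only on a subinterval of $(0,1)$ with no prescribed endpoint behaviour, and ``$w$ must hit zero in the interior'' is not the correct contradiction to aim for.

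The paper bypasses all of this with one observation you overlooked. You write that one ``must be careful, because a monotone front exists only for $\gamma=\gamma_\ast$'' --- but you do not need a front at speed $\gamma$; the front $V$ at speed $\gamma_\ast$ is itself a \emph{subsolution} at parameter $\gamma$, since
\[
V''+\gamma\,g(V)\,V'+f(V)=(\gamma-\gamma_\ast)\,g(V)\,V'\ge 0.
\]
Pairing this subsolution $V$ with the supersolution $W$ and invoking a standard monotone-iteration / maximum-principle argument (Protter--Weinberger \cite{ProWe84}) produces an actual heteroclinic solution $U$ of \eqref{elliptic} with $V\le U\le W$, hence with the correct asymptotic limits. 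That contradicts the uniqueness of the speed $\gamma_\ast$ already established in Proposition~\ref{prop:auxode}. The inequality $\underline{\gamma}\ge\gamma_\ast$ is handled symmetrically. So the paper leverages uniqueness of the speed as a black box, whereas your rotated-vector-field route essentially re-proves that uniqueness in a harder setting, and the boundary issues you anticipate are precisely where the extra work would pile up.
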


\begin{proof} 
\smartqed
We give a sketch of the proof.
Denote by $V$ the traveling profile given by Proposition \ref{prop:auxode};
then there holds $\gamma_\ast=-(V''+f(V))/g(V)V'$.
Since $V\in\mathcal W$, we infer the inequalities 
\begin{equation*}
	 \underline{\gamma}:= \inf_{W\in \mathcal W} \sup_{x\in\R}\frac{-\bigl(W''+f(W)\bigr)}{g(W)\,W'}
	\leq \gamma_\ast
	\leq \overline{\gamma}:=\sup_{W\in \mathcal W} \inf_{x\in\R}\frac{-\bigl(W''+f(W)\bigr)}{g(W)\,W'}.
\end{equation*}
If $\gamma_\ast<\overline{\gamma}$ then for any $\gamma\in(\gamma_\ast,\overline{\gamma})$,
there exists a function $W\in\mathcal W$ such that
\begin{equation*}
 \inf_{x\in\R}\frac{-\bigl(W''+f(W)\bigr)}{g(W)\,W'}\geq \gamma.
\end{equation*}
As a consequence, we deduce
\begin{equation*}
	W''+\gamma\,g(W)\,W'+f(W)\leq 0\leq 
	(\gamma-\gamma_\ast) g(V)\,V'=V''+\gamma\,g(V)\,V'+f(V),
\end{equation*}
showing that $W$ and $U$ are, respectively, super- and subsolution for
\begin{equation}\label{elliptic}
	U''+\gamma\,g(U)\,U'+f(U)=0.
\end{equation}
Invoking a monotonicity argument \cite{ProWe84}, we deduce the existence of a solution 
$U$ to \eqref{elliptic} 
such that $V\leq U\leq W$,
thus satisfying, in particular, the asymptotic conditions $U(-\infty)=0$ and 
$U(+\infty)=1$.
Such statement contradicts the uniqueness of the speed $\gamma_\ast$
given in Proposition \ref{prop:auxode}.
Thus, $\gamma_\ast=\overline{\gamma}$. Proving in an analogous manner the equality 
$\gamma_\ast=\underline{\gamma}$,
we deduce formula \eqref{minmax}.
\qed \end{proof}

Independently from the variational characterization of the wave speed, the existence of a solution
for \eqref{twode0} with appropriate asymptotic values is a straightforward
consequence of Proposition \ref{prop:auxode}.
The relation between the speed $\gamma$ of Proposition \ref{prop:auxode} and $c$ for
\eqref{twode0} guarantees the uniqueness of the speed for the hyperbolic
Allen--Cahn equation.

\begin{theorem}[existence of a traveling front]
\label{theoexists}
Under assumptions \eqref{H1} - \eqref{H2} there exists a unique value $c\in\R$, denoted by 
$c_\ast=c_\ast(\tau)$,
such that the equation
\begin{equation}\label{twode}
	(1-c^2\tau)U''+c\,g(U,\tau)U'+f(U)=0.
\end{equation}
has a monotone increasing front solution $U=U(\xi)$ with $U(-\infty)=0$ and $U(+\infty)=1$. The value 
$c_\ast=c_\ast(\tau)$ is related to $\gamma_\ast=\gamma_\ast(\tau)$
of Proposition \ref{prop:auxode} by the relation
\begin{equation}\label{castgast}
	c_\ast=\frac{\gamma_\ast}{\sqrt{1+\tau\,\gamma_\ast^2}}.
\end{equation}
\end{theorem}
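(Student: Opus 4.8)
The plan is to reduce the hyperbolic profile equation \eqref{twode} to the auxiliary equation \eqref{auxode} by a change of the independent variable, so that Proposition \ref{prop:auxode} applies directly. First I would note that, by Proposition \ref{prop:properties}(ii), any front solution must satisfy the subcharacteristic condition $c^2\tau<1$, so we may restrict attention to speeds $c$ with $1-c^2\tau>0$ and write $\beta:=\sqrt{1-c^2\tau}>0$. Rescaling $\xi=\beta\,\eta$, i.e. setting $U(\xi)=V(\eta)$ with $\eta=\xi/\beta$, equation \eqref{twode} becomes
\begin{equation*}
	V_{\eta\eta}+\frac{c}{\beta}\,g(V,\tau)\,V_\eta+f(V)=0,
\end{equation*}
which is exactly \eqref{auxode} with $\gamma=c/\beta=c/\sqrt{1-c^2\tau}$. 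The asymptotic conditions $V(\mp\infty)=U(\mp\infty)$ and the monotonicity of $U$ are preserved under this (orientation-preserving, since $\beta>0$) rescaling.

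Next I would invoke Proposition \ref{prop:auxode}: there is a unique $\gamma_\ast=\gamma_\ast(\tau)$ for which \eqref{auxode} admits a monotone increasing heteroclinic from $0$ to $1$. So a front for \eqref{twode} at speed $c$ exists precisely when the associated $\gamma=c/\sqrt{1-c^2\tau}$ equals $\gamma_\ast$, subject to $c^2\tau<1$. The key remaining point is therefore to show that the map
\begin{equation*}
	c\longmapsto \gamma(c)=\frac{c}{\sqrt{1-c^2\tau}},\qquad c\in(-1/\sqrt\tau,1/\sqrt\tau)
\end{equation*}
(with the obvious interpretation $\gamma=c$ when $\tau=0$) is a bijection onto $\R$. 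This is elementary: $\gamma(c)$ is continuous, strictly increasing (its derivative is $(1-c^2\tau)^{-3/2}>0$), and tends to $\pm\infty$ as $c\to\pm1/\sqrt\tau$. Hence there is a unique $c=c_\ast(\tau)$ in the admissible interval with $\gamma(c_\ast)=\gamma_\ast$; solving $\gamma_\ast=c_\ast/\sqrt{1-c_\ast^2\tau}$ for $c_\ast$ gives $c_\ast^2(1+\tau\gamma_\ast^2)=\gamma_\ast^2$, and choosing the sign so that $\mathrm{sgn}(c_\ast)=\mathrm{sgn}(\gamma_\ast)$ (consistent with Proposition \ref{prop:properties}(i)) yields the stated formula \eqref{castgast}. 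Uniqueness of $c_\ast$ follows from the injectivity of $c\mapsto\gamma(c)$ together with the uniqueness of $\gamma_\ast$ in Proposition \ref{prop:auxode}; conversely, for this $c_\ast$ the profile $U(\xi):=V(\xi/\sqrt{1-c_\ast^2\tau})$ is the desired monotone front.

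I do not anticipate a serious obstacle here: the entire content of the theorem is packaged in Proposition \ref{prop:auxode}, and the rest is the rescaling bookkeeping plus the monotonicity of the one-variable map $\gamma(c)$. The only mild subtlety worth stating carefully is that the subcharacteristic condition is what guarantees $1-c^2\tau>0$ so that the rescaling is legitimate and orientation-preserving; without Proposition \ref{prop:properties}(ii) one would have to separately rule out the regime $c^2\tau\ge 1$, where the coefficient of $U''$ in \eqref{twode} degenerates or changes sign and no monotone front connecting $0$ to $1$ can exist. Once that is in place, the argument closes cleanly.
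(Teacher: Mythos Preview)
Your proposal is correct and follows essentially the same approach as the paper: restrict to the subcharacteristic range via Proposition~\ref{prop:properties}(ii), rescale the independent variable by $\sqrt{1-c^2\tau}$ to reduce \eqref{twode} to \eqref{auxode} with $\gamma=c/\sqrt{1-c^2\tau}$, and then invoke the uniqueness of $\gamma_\ast$ from Proposition~\ref{prop:auxode} together with the fact that $c\mapsto\gamma(c)$ is an increasing bijection from $(-1/\sqrt\tau,1/\sqrt\tau)$ onto $\R$. Your write-up is in fact slightly more explicit than the paper's (you spell out the derivative of $\gamma(c)$ and the sign-matching in inverting for $c_\ast$), but the argument is the same.
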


\begin{proof} 
\smartqed
Thanks to the subcharacteristic condition \eqref{subchar}, we can 
restrict our attention to
$c\in(-1/\sqrt{\tau},1/\sqrt{\tau})$.
By applying the change of variables
\begin{equation*}
	\sqrt{1-c^2\tau}\frac{d}{d\xi}=\frac{d}{d\eta},
\end{equation*}
and setting $\gamma=\gamma(c)=c/\sqrt{1-c^2\tau}$,
equation \eqref{twode} transforms into \eqref{auxode}.
Then the profile existence and uniqueness statement follows since
$\gamma=\gamma(c)$ is increasing and $\gamma(\pm 1 /\sqrt{\tau})=\pm\infty$. 
Relation \eqref{castgast} is obtained by inverting the function $\gamma = \gamma(c)$. 
\qed \end{proof}

\subsection{Exponential decay}\label{secexpdecay}

As a consequence of the analysis in Proposition \ref{prop:auxode} and Theorem \ref{theoexists}, the profile function decays to its asymptotic limits exponentially fast.

\begin{lemma}[exponential decay of the profile]
\label{lemexpdecay}
For each $\tau \geq 0$ the front solution and its derivatives satisfy
\begin{equation}
\label{expdecay}
|\partial_\xi^j(U(\xi) - U_\pm)| \leq C e^{-\eta|\xi|}, 
\end{equation}
for all $\xi \in \R$, $j = 0,1,2$, with uniform constants $C > 0$ and $\eta > 0$.
\end{lemma}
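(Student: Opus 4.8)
The plan is to derive \eqref{expdecay} from the standard linear analysis at the two hyperbolic rest points established in the proof of Proposition \ref{prop:auxode} and Theorem \ref{theoexists}. Working in the $\eta$-variable (so that the profile $V$ solves \eqref{auxode} with $\gamma=\gamma_\ast$), I would recall that $(V,W)=(0,0)$ and $(V,W)=(1,0)$ are saddle points, with the relevant (stable/unstable) eigenvalues $\mu_0^+>0$ at the origin and $\mu_1^-<0$ at $(1,0)$ computed explicitly in step 1 of that proof. Since $V$ is a monotone heteroclinic orbit lying on the unstable manifold of $(0,0)$ and the stable manifold of $(1,0)$, the stable-manifold theorem (or a Gronwall argument on the linearized flow) gives, for $\xi\to-\infty$, $|V(\xi)|+|V'(\xi)|\le C e^{\mu_0^+\xi}$, and for $\xi\to+\infty$, $|V(\xi)-1|+|V'(\xi)|\le Ce^{\mu_1^-\xi}$; the exponential rate on the original variable $\xi$ differs only by the constant factor $\sqrt{1-c_\ast^2\tau}$ coming from the rescaling in Theorem \ref{theoexists}. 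Choosing $\eta$ to be, say, half the minimum of $\mu_0^+$ and $|\mu_1^-|$ (after rescaling) absorbs the subleading terms on the unstable/stable manifolds and yields \eqref{expdecay} for $j=0,1$.

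For the second derivative $j=2$, I would not repeat the manifold argument but instead read it off the ODE itself: since $(1-c_\ast^2\tau)U''=-c_\ast g(U,\tau)U'-f(U)$, and since $g$ is bounded on the relevant compact range of $U$ by \eqref{H2} while $f$ is $C^2$ with $f(U_\pm)=0$, Taylor expansion gives $|f(U)|=|f(U)-f(U_\pm)|\le \mathrm{Lip}(f)\,|U-U_\pm|$, so $|U''|\le C(|U'|+|U-U_\pm|)\le Ce^{-\eta|\xi|}$ on each half-line. (If one wanted the estimate for $j\ge 3$ one would differentiate the ODE and induct, but only $j=0,1,2$ is claimed.) Uniformity of $C$ and $\eta$ in $\tau\in[0,\tau_m]$ follows because the eigenvalues $\mu_0^\pm,\mu_1^\pm$ depend continuously on $(\gamma,\tau)$, the map $\tau\mapsto\gamma_\ast(\tau)$ is continuous (indeed $c_\ast$ and $\gamma_\ast$ are related by the explicit formula \eqref{castgast}), and $\sup_{\tau}c_\ast(\tau)^2\tau<1$ by the subcharacteristic condition \eqref{subchar}, so $1-c_\ast^2\tau$ stays bounded below on the compact parameter interval; hence all constants can be chosen uniformly.

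The main obstacle is purely bookkeeping rather than conceptual: one must make sure the exponential decay is genuinely uniform down to $\tau=0$ (the parabolic limit), where the equation \eqref{twode0} is non-degenerate and $\gamma_\ast(0)=c_\ast(0)=\gamma_{{}_{\textrm{\tiny AC}}}$, and uniform up to $\tau_m$. This is handled by the continuity remarks above together with the fact that $f'(0),f'(1)<0$ are fixed, so $\mu_0^+$ and $-\mu_1^-$ are bounded strictly away from zero uniformly in the parameter range; taking $\eta$ smaller than this uniform lower bound (after the $\sqrt{1-c_\ast^2\tau}$ rescaling, which is itself bounded away from $0$) completes the argument. I would assemble the three ingredients — linear decay on the invariant manifolds, the ODE bound for $U''$, and the parameter-continuity for uniformity — into a short paragraph and conclude.
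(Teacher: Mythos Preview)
Your proposal is correct and follows essentially the same route as the paper: linearize the first-order system at the two saddle rest points, read off the exponential rates from the stable/unstable eigenvalues, and conclude via the stable-manifold theorem. The paper carries this out directly in the $\xi$-variable (writing the system \eqref{firstODE} with $V=U'$ and computing the eigenvalues of $D(\hat\Phi,\hat\Psi)/D(U,V)$ at $(U_\pm,0)$), whereas you work in the rescaled $\eta$-variable and then undo the factor $\sqrt{1-c_\ast^2\tau}$; this is a cosmetic difference only. For $j=2$ the paper simply observes that $V'=U''$ is the second component of the vector field and hence inherits the decay of $(U,V)$, which is the same content as your ODE bound $|U''|\le C(|U'|+|U-U_\pm|)$. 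Your additional paragraph on uniformity in $\tau$ goes beyond what the paper actually proves: the paper is content with ``$\eta=\eta(\tau)>0$ for each fixed $\tau\ge 0$'', so the word ``uniform'' in the statement refers to $\xi$ and $j$, not to the parameter $\tau$.
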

\begin{proof}
\smartqed
Suppose that $U = U(\xi)$ is the profile function of Theorem \ref{theoexists}, traveling with speed $c = c_*(\tau)$. As before,  $\xi = x - ct$ and $\, ' = d/d\xi$. If we denote $V = U'$ then $(U,V) = (U,V)(\xi)$ is an heteroclinic connection between the rest points
\[
(U_+, V_+) = (1,0) \quad \text{and,} \quad (U_-,V_-) = (0,0),
\]
as $\xi \to \pm \infty$, of the first order system
\begin{equation}
\label{firstODE}
\begin{pmatrix}U \\ V
\end{pmatrix}' = \begin{pmatrix} V \\ - (1-c^2 \tau)^{-1} (f(U) +c g(U,\tau)V)
\end{pmatrix} =: \begin{pmatrix} \hat{\Phi} \\ \hat{\Psi}\end{pmatrix} (U,V).
\end{equation} 
Linearizing around the asymptotic rest states we obtain
\[
\frac{D(\hat{\Phi}, \hat{\Psi})}{D(U,V)}(U_\pm, V_\pm) =  \begin{pmatrix} 0 & & 1 \\ (1-c^2\tau)^{-1} |a_\pm| & & \, -(1-c^2\tau)^{-1}c b_\pm \end{pmatrix},
\]
where, in view of assumptions \eqref{H1} and \eqref{H2}, we have denoted $a_\pm = f'(U_\pm) < 0$ and $b_\pm = g(U_\pm,\tau) > 0$. Its eigenvalues are
\[
\mu_{1,2}^\pm = - \tfrac{1}{2} cb_\pm (1-c^2 \tau)^{-1} \pm \tfrac{1}{2} \sqrt{c^2 b_{\pm}^2(1-c^2 
\tau)^{-2} + 4(1-c^2 \tau)^{-1}|a_\pm|},
\]
which are real and the asymptotic states are non-degenerate hyperbolic points. The positive eigenvalue at  $(U_-,V_-) = (0,0)$ is
\[
\mu_2^- = - \tfrac{1}{2}cb_-(1-c^2 \tau)^{-1} + \tfrac{1}{2} \sqrt{c^2 b_{-}^2(1-c^2 \tau)^{-2} + 
4(1-c^2 \tau)^{-1}|a_-|},
\]
and the orbit decays to $(U_-,V_-) = (0,0)$ with exponental rate $|(U,V)(\xi)| \leq C e^{\mu_2^- \xi}$ as 
$\xi \to -\infty$ for some uniform $C > 0$.  The negative eigenvalue at $(U_+,V_+) = (1,0)$ is
\[
\mu_1^+ =  - \tfrac{1}{2}cb_+(1-c^2 \tau)^{-1} - \tfrac{1}{2} \sqrt{c^2 b_{+}^2(1-c^2 \tau)^{-2} + 
4(1-c^2 \tau)^{-1}|a_+|}, 
\]
and the orbit decays as $|(U,V)(\xi) - (1,0)| \leq C e^{-|\mu_1^+|\xi}$, when $\xi \to +\infty$. Thus, if we define $\eta = \min \{\mu_2^-, |\mu_1^+|\} > 0$ we obtain the result. Notice that $\eta = \eta(\tau) > 0$ for each fixed $\tau \geq 0$ and that $V' = U''$ also decays exponentially fast.
\qed
\end{proof}

\section{Perturbation equations and the stability problem}

\label{secperturb}

In this section we derive the equation for a perturbation of the traveling front, linearize it around the 
wave, and set up the associated spectral 
problem. 

For fixed $\tau > 0$ let $c = c_*(\tau) \in (-1/\sqrt{\tau},1/\sqrt{\tau})$ be the unique wave speed of the 
traveling front of Theorem \ref{theoexists}. We then recast equation \eqref{hypAC} in the moving coordinate 
frame and, with a slight abuse of notation, make the transformation $x \to x-ct$ so that the model equation 
\eqref{hypAC} now reads
\begin{equation}
 \label{newhypAC}
 \tau u_{tt} - 2c\tau u_{xt} + g(u,\tau) u_t = (1-c^2 \tau) u_{xx} + c g(u,\tau) u_x + f(u).
\end{equation}
From this point on and for the rest of the paper $x$ will denote the (Galilean) moving variable and 
the front profile $U = U(x)$ is now a stationary solution to \eqref{newhypAC}, satisfying
\begin{equation}
 \label{nprofileq}
 (1-c^2\tau) U_{xx} + cg(U,\tau) U_x + f(U) = 0.
\end{equation}
As before, the asymptotic limits are $U_+ = U(+\infty) = 1$ and $U_- = U(-\infty) = 0$. In view of Lemma 
\ref{lemexpdecay} the convergence of $U$ to its asymptotic limits is exponential,
\begin{equation}
 \label{expdec}
 |\partial_x^j(U - U_\pm) (x)| \leq C e^{- \eta |x|},
\end{equation}
as $x \to \pm \infty$ and for some $C, \eta > 0$.

\begin{remark}
\label{remUreg}
By regularity of the profile and its exponential decay, it is clear that $U_x \in H^1(\R)$. 
Apply a bootstrapping argument to verify that, in fact, $U_x \in H^3(\R)$. Details are left to the reader.
\end{remark}

\subsection{Equations for the perturbation and the spectral problem}

Let us consider solutions to \eqref{newhypAC} of the form $u(x,t) + U(x)$, where now $u = u(x,t)$ stands for 
a perturbation of the front. Upon substitution, we obtain the following nonlinear equation for the 
perturbation,
\begin{equation}
 \label{nlpert}
 \begin{aligned}
  \tau u_{tt} - 2c\tau u_{xt} &+ g(u+U,\tau) u_t = \\ &=(1-c^2 \tau) u_{xx} + (1-c^2 \tau) U_{xx} + c 
g(u+U,\tau)( 
u_x + U_x) + f(u).
 \end{aligned}
\end{equation}
Expand the nonlinear terms in Taylor series around $U$ and use the profile equation \eqref{nprofileq} to 
write equation \eqref{nlpert} as
\[
\begin{aligned}
 \tau u_{tt} - 2c\tau u_{xt} + g(U,\tau) u_t &= (1-c^2 \tau) u_{xx} + c g(U,\tau) u_x  + (c 
g_u(U,\tau)U_x + f'(U))u +\\ & \, + O(|uu_t|) + O(|uu_x|) + O(|u|^2).
\end{aligned}
\]
Let us define
\[
 a(x) := c g(U,\tau)_x + f'(U), \quad b(x) := g(U,\tau) \, > \, 0.
\]
Dropping the nonlinear terms we arrive at the following linearized equation for the perturbation
\begin{equation}
 \label{linequ}
 \tau u_{tt} - 2c\tau u_{xt} + b(x) u_t = (1-c^2 \tau) u_{xx} + c b(x) u_x  + a(x) u.
\end{equation}

Let us specialize the linear problem to solutions of the form $u(x,t) = e^{\lambda t} v(x)$, where $\lambda 
\in \C$ is the spectral parameter and $v$ belongs to an appropriate Banach space $X$. The result is the 
following spectral equation for $v$,
\begin{equation}
 \label{specprob}
 \lambda^2 \tau v - 2c \lambda \tau v_x + \lambda b(x) v = (1-c^2 \tau) v_{xx} + cb(x) v_x + a(x) v,
\end{equation}
for some $v \in X$, $\lambda \in \C$.

In this analysis we choose the perturbation space to be $X = L^2(\R;\C)$, and the domain of solutions to 
\eqref{specprob} to be $\mathcal{D} = H^2(\R;\C)$. In the sequel, $L^2$ and $H^m$, with $m > 0$, will denote the complex spaces $L^2(\R;\C)$ and $H^m(\R;\C)$, respectively, except where it is explicitly stated otherwise.

\begin{remark}
\label{rempencil}
 Notice that the spectral equation \eqref{specprob} is quadratic in $\lambda$. Under the substitution $\lambda 
= i \zeta$ equation \eqref{specprob} can be written in terms of a 
\textit{quadratic operator pencil} $\tilde \cA(\zeta)$ (cf. Markus \cite{Markus88}), given by 
\[
 \tilde \cA (\zeta) = \tilde \cA_0 + \zeta \tilde \cA_1 + \zeta^2 \tilde \cA_2,
\]
with
\[
 \begin{aligned}
   \tilde \cA_0 &= (1-c^2 \tau) \frac{d^2}{dx^2} + cb(x) \frac{d}{dx} + a(x),\\
   \tilde \cA_1 &= i 2 \tau \frac{d}{dx} - ib(x),\\
   \tilde \cA_2 &= \tau. 
 \end{aligned}
\]
It is easy to see that \eqref{specprob} is equivalent to $\tilde \cA(\zeta) v = 0$. The transformation
$v_1 = v$, $v_2 = \lambda v - cv_x$ defines an 
appropriate Cartesian product of the base space which allows us to write equation \eqref{specprob} as a 
genuine eigenvalue problem in the form
\begin{equation}
\label{defcLtau}
 \lambda \begin{pmatrix}
          v_1 \\ v_2
         \end{pmatrix} = \begin{pmatrix}
          c \partial_x & & 1 \\ \tau^{-1} (\partial_x^2 + a(x)) & & \, c \partial_x - \tau^{-1}b(x)
         \end{pmatrix}\begin{pmatrix}v_1 \\ v_2 \end{pmatrix} =: \cL^\tau \begin{pmatrix}v_1 \\ 
v_2\end{pmatrix}.
\end{equation}
The linear operator $\cL^\tau$ (densely defined in $L^2 \times L^2$ with domain 
$\cD(\cL^\tau) = H^2 \times H^1$ for $\tau > 0$) is often called the \textit{companion matrix} to the pencil $\tilde 
\cA$ (see \cite{BrJoK14,KoMi14,LaSu1} for further information). 
\end{remark}

\subsection{Reformulation as a first order system}

According to custom in the literature of stability of nonlinear waves \cite{AGJ90,KaPro13}, we now recast the 
spectral problem \eqref{specprob} as a first order system in the frequency regime of the form
\begin{equation} 
\label{Wsystem}
W_x = \A^\tau(x,\lambda) W,
\end{equation}
where $\lambda \in \C$ is a parameter and $\tau > 0$ is fixed. Indeed, making
\[
 W= \begin{pmatrix}
     v \\ v_x
    \end{pmatrix},
\]
and noticing that because of the subcharacteristic condition (see Proposition \ref{prop:properties} 
(ii)) there holds $1 - c^2 \tau > 0$, we obtain a first order ODE system of the form \eqref{Wsystem} 
with coefficient matrix given by
\begin{equation}
 \label{coeffA}
\A^\tau(x,\lambda) = (1 - c^2 \tau)^{-1}\begin{pmatrix}
                                        0 &  & 1-c^2\tau \\ \tau \lambda^2 + \lambda b(x) - a(x) &  & \, -c(b(x) + 2 
\tau \lambda)
                                       \end{pmatrix}.
\end{equation}
Since $U(x) \to U_\pm$ as $x \to \pm \infty$, with $U_- = 0$, $U_+ = 1$, let us denote
\[
\begin{aligned}
a_\pm &= \lim_{x \to \pm \infty} a(x) = \lim_{x \to \pm \infty} \big( f'(U) + g_u(U,\tau)U_x \big) = 
f'(U_\pm) 
< 0,\\
b_\pm &= \lim_{x \to \pm \infty} b(x) = \lim_{x \to \pm \infty} g(U,\tau) = g(U_\pm,\tau) > 0,
\end{aligned}
\]
because $U_x \to 0$, $f'(1)$, $f'(0) < 0$ and $g(U,\tau) > 0$, by hypotheses \eqref{H1} and \eqref{H2}. In 
this fashion, we denote the asymptotic coefficient matrices as
\begin{equation}
\label{asympcoeff}
\begin{aligned}
\A^\tau_\pm (\lambda) &:= \lim_{x \to \pm \infty} \A^\tau(x,\lambda) \\&= (1 - c^2\tau)^{-1} \begin{pmatrix}
                                                                                         0 &  & 1 - c^2 \tau \\ 
\tau \lambda^2 + \lambda b_\pm + |a_\pm| &  & \, -c(b_\pm + 2 \tau \lambda)
                                                                                        \end{pmatrix},
\end{aligned}
\end{equation}
for each $\tau \geq 0$, $\lambda \in \C$. 

It is convenient to define the spectra and 
resolvent of the spectral problem \eqref{specprob} in terms of the first order systems \eqref{Wsystem}. 
Consider the following family of linear, 
closed, densely defined operators
\[
 \cT^{\tau}(\lambda) : \bar \cD \to L^2 \times L^2,
\]
\[
 \cT^{\tau}(\lambda) := \partial_x - \A^{\tau}(x,\lambda),
\]
with domain $\bar \cD = H^1 \times H^1$, indexed by $\tau \geq 0$ and parametrized by $\lambda \in \C$. With 
a 
slight abuse of notation we call $W \in H^1 \times H^1$ an \textit{eigenfunction} associated to the 
eigenvalue 
$\lambda \in \C$ provided $W$ is a bounded solution to the equation
\[
 \cT^{\tau}(\lambda) W = W_x - \A^{\tau}(x,\lambda) W = 0.
\]

\begin{definition}[resolvent and spectra]
\label{defsigmatwo}
For fixed $\tau \geq 0$ we define,
\[
\begin{aligned}
\rho &:= \{\lambda \in \C \, : \, \cT^{\tau}(\lambda) \,\text{ is injective and onto, and } 
\cT^{\tau}(\lambda)^{-1} \, \text{is bounded} \, \},\\
\ptsp &:= \{ \lambda \in \C\,: \; \cT^{\tau}(\lambda) \,\text{ is Fredholm with index zero and has a} \\
& \qquad \qquad \qquad \text{non-trivial kernel} \},\\
\ess &:= \{ \lambda \in \C\,: \; \cT^{\tau}(\lambda) \,\text{ is either not Fredholm or has index 
different } \\
& \qquad \qquad \qquad \text{from zero} \}.
\end{aligned}
\]
The spectrum $\sigma$ of problem \eqref{specprob} is defined as $\sigma = \ess \cup 
\ptsp$. Since $\cT^{\tau}(\lambda)$ is closed, we know that $\rho = \C \backslash \sigma$ (cf. 
Kato \cite{Kat80}).
\end{definition}

\begin{remark}
 This definition of spectrum is due to Weyl \cite{We10}, making $\ess$ a large set but easy to compute, 
whereas $\ptsp$ is a discrete set of isolated eigenvalues with finite multiplicity (see Remark 2.2.4 
in \cite{KaPro13}). We remind the reader that a closed operator $\mathcal{L}$ is said to be 
Fredholm if its range $\mathcal{R(L)}$ is closed, and both its nullity, $\nul\mathcal{L} = \dim \ker 
\mathcal{L}$, and its deficiency, $\mathrm{def} \,\mathcal{L} = \mathrm{codim} \, \mathcal{R(L)}$, are 
finite. In such a case the index of $\cL$ is defined as
$\ind \mathcal{L} = \nul \mathcal{L} - \mathrm{def} \, \mathcal{L}$ (cf. \cite{Kat80}).
\end{remark}

For each $\tau \geq 0$ we can write the coefficients as
\[
 \A^\tau(x,\lambda) = \A^\tau_0(x) + \lambda \A^\tau_1(x) + \lambda^2 \A^\tau_2(x),
\]
where
\[
 \A^\tau_0(x) = (1-c^2\tau)^{-1}\begin{pmatrix}
                 0 & & 1 - c^2\tau \\ -a(x) & & -cb(x)
                \end{pmatrix},
\]
\[
 \A^\tau_1(x) = (1-c^2\tau)^{-1}\begin{pmatrix}
                                 0 & & 0 \\ b(x) & & -2c\tau 
                                \end{pmatrix},
\]
\[
 \A^\tau_2(x) = (1-c^2\tau)^{-1}\begin{pmatrix}
                                 0 & & 0 \\ \tau & & 0
                                \end{pmatrix}.
\]

Therefore, we may compute
\begin{equation}
 \label{derivlamA}
\partial_\lambda \A^\tau(x,\lambda) = \A^\tau_1(x) + 2 \lambda \A^\tau_2(x).
\end{equation}
Furthermore, if we regard the coefficients \eqref{coeffA} as functions from $(\lambda,\tau)$ into $L^\infty$ 
then they are analytic in $\lambda$ (quadratic polynomial) and continuous in $\tau$.

We also define the algebraic and geometric multiplicities of the elements in the point spectrum as 
follows.
\begin{definition}
 \label{defmult}
 Assume $\lambda \in \ptsp$. Its geometric multiplicity (\textit{g.m.}) is the maximal number of linearly 
independent elements in $\ker \cT^{\tau}(\lambda)$. Suppose $\lambda \in \ptsp$ has $g.m. = 1$, so that 
$\ker \cT^{\tau}(\lambda) =$ span $\{W_0\}$. We say $\lambda$ has algebraic multiplicity (\textit{a.m.}) 
equal to $m$ if we can solve
\[
 \cT^{\tau}(\lambda) W_j = \partial_\lambda \A^\tau(x,\lambda) W_{j-1},
\]
for each $j = 1, \ldots, m-1$, with $W_j \in H^1$, but there is no bounded $H^1$ solution $W$ to
\[
 \cT^{\tau}(\lambda) W = \partial_\lambda \A^\tau(x,\lambda)  W_{m-1}.
\]
For an arbitrary eigenvalue $\lambda \in \ptsp$ with $g.m.= l$, the algebraic multiplicity is defined as the 
sum of the multiplicities $\sum_k^l m_k$ of a maximal set of linearly independent elements in $\ker 
\cT^{\tau}(\lambda) = $ span $\{W_1, \ldots, W_l\}$.
\end{definition}

\begin{remark}
Notice that, unlike the operator defined in \eqref{defcLtau}, the spectral problem formulated as a first order system is well defined also for $\tau = 0$, as
\begin{equation}
\label{coeffA0}
\A^0(x,\lambda) = \begin{pmatrix}
                   0 & & 1 \\ \lambda b(x) - a(x) & & \, -c b(x)
                  \end{pmatrix},
\end{equation}
where the coefficients $a(x) = f'(U) + g(U,0)_x$, $b(x) = g(U,0)$ and the speed $c = c(0)$ are evaluated at 
$\tau = 0$. 
\end{remark}

Finally we remark that, due to translation invariance, $\lambda = 0$ belongs to the point spectrum.
\begin{lemma}
\label{lemzeroeigenv}
For each $\tau \geq 0$, $0 \in \ptsp$, with associated eigenfunction $\Phi = (U_x, U_{xx})^\top \in H^1 
\times H^1$.
\end{lemma}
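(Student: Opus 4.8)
The plan is to verify directly that $W_0 := (U_x, U_{xx})^\top$ solves the first order system $\cT^\tau(0) W_0 = 0$, i.e. that $(W_0)_x = \A^\tau(x,0) W_0$, and that $W_0$ is bounded and lies in $H^1 \times H^1$; then, since by Definition \ref{defsigmatwo} membership in $\ptsp$ also requires $\cT^\tau(0)$ to be Fredholm with index zero, I would invoke the asymptotic hyperbolicity of the limiting matrices $\A^\tau_\pm(0)$ together with standard Fredholm theory for first order systems with asymptotically constant coefficients.

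First I would differentiate the profile equation \eqref{nprofileq} with respect to $x$. Writing $p := U_x$, the equation $(1-c^2\tau)U_{xx} + cg(U,\tau)U_x + f(U) = 0$ differentiates to $(1-c^2\tau)p_{xx} + c(g(U,\tau))_x p + cg(U,\tau)p_x + f'(U)p = 0$, which, recalling $a(x) = c(g(U,\tau))_x + f'(U)$ and $b(x) = g(U,\tau)$, is exactly $(1-c^2\tau)p_{xx} + cb(x)p_x + a(x)p = 0$. This is precisely the $\lambda = 0$ case of the scalar spectral equation \eqref{specprob}. Hence, setting $v = p = U_x$ and recalling that $W = (v, v_x)^\top$ solves \eqref{Wsystem} with coefficient matrix \eqref{coeffA}, we see that $W_0 = (U_x, U_{xx})^\top$ satisfies $(W_0)_x = \A^\tau(x,0) W_0$, i.e. $\cT^\tau(0)W_0 = 0$. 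For the regularity and boundedness: by Remark \ref{remUreg} we have $U_x \in H^3(\R)$, so $U_x, U_{xx} \in H^1(\R) \subset L^2(\R)$, and $W_0 \in H^1 \times H^1$; moreover by Lemma \ref{lemexpdecay} (equivalently \eqref{expdec}) both $U_x$ and $U_{xx}$ decay exponentially, so $W_0$ is a bounded solution, hence a genuine eigenfunction in the sense of the excerpt. Finally, $W_0 \not\equiv 0$ because $U$ is strictly monotone increasing, so $U_x > 0$; thus $\ker \cT^\tau(0)$ is non-trivial.

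It remains to confirm that $\cT^\tau(0) = \partial_x - \A^\tau(x,0)$ is Fredholm of index zero, which places $0$ in $\ptsp$ rather than $\ess$. Here I would point to the fact that $\A^\tau(x,0) \to \A^\tau_\pm(0)$ exponentially fast as $x \to \pm\infty$ (a consequence of \eqref{expdec}), and that the asymptotic matrices $\A^\tau_\pm(0)$ from \eqref{asympcoeff} are hyperbolic: at $\lambda = 0$ they read $(1-c^2\tau)^{-1}\begin{pmatrix} 0 & 1-c^2\tau \\ |a_\pm| & -cb_\pm \end{pmatrix}$, with eigenvalues $\mu^\pm_{1,2} = -\tfrac12 cb_\pm(1-c^2\tau)^{-1} \pm \tfrac12\sqrt{c^2 b_\pm^2(1-c^2\tau)^{-2} + 4(1-c^2\tau)^{-1}|a_\pm|}$ — exactly the nonzero real eigenvalues already computed in the proof of Lemma \ref{lemexpdecay}, one positive and one negative on each side. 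Since there is no eigenvalue on the imaginary axis at either end, the standard theory for operators of the form $\partial_x - \A(x)$ with exponentially asymptotically constant hyperbolic coefficients (the Palmer/Sandstede-Scheel-type criterion, or Lemma 3.1.10 in \cite{KaPro13}) gives that $\cT^\tau(0)$ is Fredholm; its index equals the difference of the dimensions of the unstable subspaces at $+\infty$ and $-\infty$, which here is $1 - 1 = 0$. Therefore $0 \in \ptsp$ with eigenfunction $\Phi = W_0 = (U_x, U_{xx})^\top$.

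The argument is essentially routine; the only point requiring care is the last step, justifying the Fredholm-index-zero property, and even that is a direct citation once one notes that the relevant hyperbolicity of $\A^\tau_\pm(0)$ has already been established in Lemma \ref{lemexpdecay}. No genuine obstacle is anticipated.
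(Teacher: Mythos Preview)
Your proof is correct and follows essentially the same approach as the paper: differentiate the profile equation \eqref{nprofileq} to exhibit $(U_x,U_{xx})^\top$ as a nontrivial element of $\ker\cT^\tau(0)$, and invoke Remark~\ref{remUreg} for the required $H^1\times H^1$ regularity. The paper's own proof is a two-line sketch that stops there; you go further and explicitly justify the Fredholm index-zero property via hyperbolicity of $\A^\tau_\pm(0)$ and the Palmer/Sandstede criterion, which the paper postpones to Section~\ref{secess} (Lemma~\ref{lemconsplit} and Corollary~\ref{corstabess}) --- so your argument is in fact more self-contained at this point in the exposition.
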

\begin{proof} 
\smartqed
Follows by a direct calculation using the profile equation \eqref{nprofileq}. 
Notice that $U_{x} \in H^2$ (see Remark \ref{remUreg}), so that $\Phi = (U_x, U_{xx})^\top \in \ker 
\cT^\tau(0) \subset H^1 \times H^1$ is indeed an eigenfunction.
\qed \end{proof}

\subsection{Spectral equivalence}
\label{secequiv}

The seasoned reader might rightfully ask what is the relation between the spectrum of Definition \ref{defsigmatwo}, and the standard spectrum of the family of operators $\cL^\tau$ defined in \eqref{defcLtau} (see Remark \ref{rempencil}). Just like in the relaxed Allen-Cahn case (see Section 3 of \cite{LMPS16}), we shall prove that there is a one-to-one correspondence between the two sets, both in location and in multiplicities.

First observe that the family of operators $\cL^\tau$ in \eqref{defcLtau} is defined for parameter values of $\tau > 0$ only, whereas the first order systems \eqref{Wsystem} are well defined for $\tau = 0$ as well. (This happens because the hyperbolic equation \eqref{hypAC} actually degenerates into a parabolic equation when $\tau \to 0^+$.) Thus, we shall prove the spectral equivalence between the two spectral problems assuming that $\tau > 0$. Notice that for each $\tau > 0$ the operator $\cL^\tau : L^2 \times L^2 \to L^2 \times L^2$  is a closed, densely defined linear operator with domain $\cD(\cL^\tau) = H^2 \times H^1$.

\begin{lemma}
\label{lemQinv}
For each $\lambda \in \C$ and $\tau > 0$, the mapping
\[
\begin{aligned}
\cK : \ker (\cL^\tau - \lambda) &\subset H^2 \times H^1 \, \longrightarrow \ker \cT^\tau(\lambda)  \subset H^1 \times H^1,\\
\cK \begin{pmatrix} v_1 \\ v_2 \end{pmatrix} &:= \begin{pmatrix} v_1 \\ \partial_x v_1 \end{pmatrix}, \qquad \begin{pmatrix} v_1 \\ v_2 \end{pmatrix} \in \ker (\cL^\tau - \lambda),
\end{aligned}
\]
is one-to-one and onto.
\end{lemma}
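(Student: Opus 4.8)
The plan is to show that $\cK$ is well-defined (i.e.\ lands in $\ker\cT^\tau(\lambda)$), injective, and surjective, by exploiting the block structure of $\cL^\tau$ in \eqref{defcLtau} together with the relation $v_2 = \lambda v_1 - c\partial_x v_1$ built into the transformation of Remark \ref{rempencil}. First I would take $(v_1,v_2)^\top \in \ker(\cL^\tau - \lambda)$, meaning $\lambda v_1 = c\partial_x v_1 + v_2$ and $\lambda v_2 = \tau^{-1}(\partial_x^2 v_1 + a v_1) + c\partial_x v_2 - \tau^{-1} b v_1$. The first identity gives $v_2 = \lambda v_1 - c \partial_x v_1$, so that $(v_1,v_2)$ is entirely determined by $v_1$; substituting this into the second identity and clearing the $\tau^{-1}$ (here $\tau>0$ is used) yields precisely the scalar spectral equation \eqref{specprob} for $v_1$. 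Differentiating and rearranging, or simply invoking the computation behind \eqref{coeffA}, shows $W := (v_1, \partial_x v_1)^\top$ solves $W_x = \A^\tau(x,\lambda) W$, i.e.\ $W \in \ker\cT^\tau(\lambda)$; since $v_1 \in H^2$ we have $W \in H^1\times H^1$, so $\cK$ is well-defined.

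Injectivity is immediate: if $\cK(v_1,v_2)^\top = 0$ then $v_1 = 0$, and since $v_2 = \lambda v_1 - c\partial_x v_1 = 0$ as well, the whole vector vanishes. For surjectivity, take $W = (w_1,w_2)^\top \in \ker\cT^\tau(\lambda) \subset H^1\times H^1$; the first component of $W_x = \A^\tau W$ forces $w_2 = \partial_x w_1$, and the second component then says that $w_1$ satisfies the scalar equation \eqref{specprob}. In particular $\partial_x^2 w_1 \in L^2$, so $w_1 \in H^2$. Now set $v_1 := w_1$ and $v_2 := \lambda v_1 - c\partial_x v_1 \in H^1$; reversing the algebra above shows $(v_1,v_2)^\top \in \ker(\cL^\tau - \lambda) \subset H^2 \times H^1$, and by construction $\cK(v_1,v_2)^\top = (w_1, \partial_x w_1)^\top = W$. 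Hence $\cK$ is onto.

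I do not anticipate a genuine obstacle here; the lemma is essentially bookkeeping about the companion-matrix reduction, and the only point requiring care is the regularity tracking (showing $v_1 \in H^2$ and $v_2 \in H^1$ on each side, which hinges on $\tau > 0$ so that $\partial_x^2 v_1$ can be solved for in terms of lower-order terms in $L^2$). The mild subtlety worth flagging explicitly is that, although $\cT^\tau(\lambda)$ is \emph{a priori} defined on $H^1\times H^1$, any element of its kernel automatically has more regularity in its first component — this is what makes the correspondence with $\cD(\cL^\tau) = H^2\times H^1$ exact rather than merely an inclusion. Once this lemma is in place, the equivalence of $\ptsp$ (including geometric multiplicities) for the two formulations follows, and a parallel argument on generalized eigenvectors using \eqref{derivlamA} handles algebraic multiplicities.
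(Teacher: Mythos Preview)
Your proposal is correct and follows essentially the same approach as the paper: verify well-definedness by substituting $v_2 = \lambda v_1 - c\partial_x v_1$ into the second row of $\cL^\tau$ to recover the scalar equation \eqref{specprob}, then reverse the construction for surjectivity and use the same relation for injectivity. One small slip to fix: in your second displayed identity you wrote $-\tau^{-1} b\, v_1$ where it should be $-\tau^{-1} b\, v_2$ (the damping acts on the second component in \eqref{defcLtau}); this does not affect the argument. Your explicit remark that any $W \in \ker\cT^\tau(\lambda)$ automatically has $w_1 \in H^2$ (by solving for $\partial_x^2 w_1$ from the second row, using $\tau>0$) is a useful clarification that the paper leaves somewhat implicit.
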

\begin{proof}
\smartqed
First we check that $(v_1,v_2)^\top \in \ker (\cL^\tau - \lambda)$ implies that $\cK (v_1,v_2)^\top \in \ker \cT^\tau(\lambda)$. In that case we have the system
\[
\begin{aligned}
c \partial_x v_1 + v_2 &= \lambda v_1 \\
\tau^{-1} (\partial_x^2 + a(x)) v_1 + (c \partial_x - \tau^{-1} b(x)) v_2 &= \lambda v_2.
\end{aligned}
\]
Labeling $v := v_1$ and substituting the first equation into the second we immediately arrive at equation \eqref{specprob}, with $(v, v_x) \in H^1 \times H^1$. This shows that $\cK (v_1, v_2)^\top = (v, v_x)^\top \in \ker \cT^\tau(\lambda)$.

Now suppose that $(v, v_x)^\top \in \ker \cT^\tau(\lambda) \subset H^1 \times H^1$. Then clearly $v \in H^2$ and let us define $v_1 := v$, $v_2 := \lambda v - cv_x$. It is then easy to verify that
\[
c \partial_x v_1 + v_2 = \lambda v = \lambda v_1, \qquad \text{and, }
\]
\[
\tau^{-1} (\partial_x^2 + a(x)) v_1 + (c\partial_x - \tau^{-1} b(x)) v_2 = \tau^{-1} (\lambda^2 \tau v - c \lambda \tau v_x) = \lambda (\lambda v - cv_x) = \lambda v_2.
\]
This yields $(v_1, v_2)^\top \in \ker (\cL^\tau - \lambda)$. Thus, for each element $(v, v_x)^\top \in \ker \cT^\tau(\lambda)$ there exists $(v_1, v_2)^\top \in \ker (\cL^\tau - \lambda)$ such that $(v, v_x)^\top = \cK (v_2, v_2)^\top$, and we verify that $\cK$ is onto.

Finally, suppose that $\cK (u_1, u_2)^\top = \cK(v_1, v_2)^\top$ for $(u_1, u_2)$, $(v_1, v_2) \in \ker (\cL^\tau - \lambda)$. This means that $(u_1, \partial_x u_1) = (v_1, \partial_x v_1)$ a.e. in $H^2 \times H^1$. But this implies that $v_2 = \lambda v_1 - c\partial_x v_1 = \lambda u_1 - c \partial_x u_1 = u_2$ a.e. in $H^1$ and we conclude that the mapping $\cK$ is one-to one. 
\qed
\end{proof}

An immediate consequence of the one-to-one correspondence between the kernels of $\cL^\tau - \lambda$ and $\cT^\tau(\lambda)$ is that the Fredholm properties of both operators are the same (see, e.g., Sandstede \cite{San02}, section 3.3). Therefore, if we naturally adopt Weyl's definition of spectra and define
\[
\begin{aligned}
\ptsp(\cL^\tau) &:= \{ \lambda \in \C\,: \; \cL^\tau - \lambda \,\text{ is Fredholm with index zero and has a} \\
& \qquad \qquad \qquad \text{non-trivial kernel} \},\\
\ess(\cL^\tau) &:= \{ \lambda \in \C\,: \; \cL^\tau - \lambda \,\text{ is either not Fredholm or has index 
different } \\
& \qquad \qquad \qquad \text{from zero} \},
\end{aligned}
\]
with $\rho(\cL^\tau) = \C \backslash (\ptsp(\cL^\tau) \cup \ess(\cL^\tau))$, then we obtain the following
\begin{corollary}
\label{corsamespec}
For each $\tau > 0$,
\[
\ptsp = \ptsp(\cL^\tau), \quad  \ess = \ess(\cL^\tau), \quad \rho = \rho(\cL^\tau),
\]
where the sets on the left hand sides of the above equalities are, of course, the sets of Definition \ref{defsigmatwo}.
\end{corollary}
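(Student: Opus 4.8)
The plan is to deduce Corollary~\ref{corsamespec} directly from Lemma~\ref{lemQinv} together with the standard fact that an isomorphism between kernels, suitably promoted to an isomorphism between cokernels, transfers Fredholm properties and indices. First I would observe that Lemma~\ref{lemQinv} already establishes, for every $\lambda \in \C$ and $\tau > 0$, that $\cK$ restricts to a linear isomorphism $\ker(\cL^\tau - \lambda) \to \ker \cT^\tau(\lambda)$; in particular $\nul(\cL^\tau - \lambda) = \nul \cT^\tau(\lambda)$, and one side has a non-trivial kernel if and only if the other does. This already gives half of each of the two spectral identities, provided we also control the deficiencies.

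Next I would address the cokernels. The natural move is to exhibit an explicit bounded invertible map between the ambient spaces that intertwines $\cL^\tau - \lambda$ and $\cT^\tau(\lambda)$ up to bounded invertible factors, so that range and codimension of range are preserved. Concretely, set $T : H^2 \times H^1 \to H^2 \times H^1$, $T(v_1,v_2)^\top := (v_1, \lambda v_1 - c\partial_x v_1)^\top$ (with inverse $(v_1,w)^\top \mapsto (v_1, \lambda v_1 - w)^\top$, using $c^2\tau < 1$ so that $c \ne 0$ is not even needed — $T$ is invertible regardless); this is the map implicit in Remark~\ref{rempencil}. A direct computation, essentially the one carried out in the proof of Lemma~\ref{lemQinv}, shows that under $T$ the operator $\cL^\tau - \lambda$ is conjugate to the operator $v_1 \mapsto$ (left-hand side of the scalar spectral equation \eqref{specprob}) acting $H^2 \to L^2$, which in turn is related to $\cT^\tau(\lambda)$ acting $H^1 \times H^1 \to L^2 \times L^2$ by the companion-matrix reduction: a scalar second-order operator $\cB : H^2 \to L^2$ and its first-order companion $\partial_x - \A$ on $H^1\times H^1$ have the same Fredholm index, with $\nul$ and $\Def$ equal, because $W \mapsto$ (first component of $W$) and $v \mapsto (v, \partial_x v + \text{lower order})^\top$ are mutually inverse isomorphisms between the kernels, and a parallel argument handles the cokernels. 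Rather than redo this by hand, I would cite Sandstede~\cite{San02}, section~3.3, exactly as the paper does in the sentence preceding the corollary: the one-to-one correspondence of kernels plus the parallel correspondence of cokernels shows $\cL^\tau - \lambda$ is Fredholm iff $\cT^\tau(\lambda)$ is, with equal indices.

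Granting that, the corollary is immediate. If $\lambda \in \ptsp(\cL^\tau)$ then $\cL^\tau - \lambda$ is Fredholm of index zero with non-trivial kernel; by the correspondence $\cT^\tau(\lambda)$ is Fredholm of index zero, and by Lemma~\ref{lemQinv} it has non-trivial kernel, so $\lambda \in \ptsp$; the reverse inclusion is symmetric, giving $\ptsp = \ptsp(\cL^\tau)$. Likewise $\lambda \in \ess(\cL^\tau)$ means $\cL^\tau - \lambda$ is not Fredholm or has non-zero index, which transfers verbatim to $\cT^\tau(\lambda)$, so $\ess = \ess(\cL^\tau)$. Since both $\cL^\tau$ and $\cT^\tau(\lambda)$ are closed and densely defined, $\rho(\cL^\tau) = \C \setminus (\ptsp(\cL^\tau)\cup\ess(\cL^\tau))$ and $\rho = \C\setminus(\ptsp\cup\ess)$ by Definition~\ref{defsigmatwo} (the remark there invoking Kato~\cite{Kat80}), so $\rho = \rho(\cL^\tau)$ follows by taking complements.

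The main obstacle is the cokernel (equivalently, the range/deficiency) half of the Fredholm equivalence: Lemma~\ref{lemQinv} as stated handles only kernels, so one must either produce the parallel isomorphism of cokernels explicitly or justify invoking \cite{San02}. The cleanest route is the conjugation-by-$T$ observation above, which turns the abstract statement ``companion operator of a polynomial pencil has the same Fredholm data as the pencil'' into something one can either cite or verify in a few lines; the only point requiring a little care is that $T$ and the companion-matrix map are bounded with bounded inverse on the relevant $H^k$-scales, which is routine since $\lambda, c, \tau$ are fixed constants and $1 - c^2\tau > 0$.
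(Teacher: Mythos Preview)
Your approach is essentially the paper's: the paper offers no standalone proof of the corollary beyond the sentence preceding it (``An immediate consequence of the one-to-one correspondence between the kernels \ldots\ is that the Fredholm properties of both operators are the same (see, e.g., Sandstede \cite{San02}, section 3.3)''), and you reproduce exactly this citation-based argument, deriving the three set equalities from the transferred Fredholm data just as intended.

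One caution on the extra elaboration you supply: the map $T(v_1,v_2)^\top := (v_1,\lambda v_1 - c\partial_x v_1)^\top$ as written ignores $v_2$ entirely and is therefore not invertible (your stated ``inverse'' is a different, genuinely invertible map). The conjugating isomorphism you want is $(v_1,v_2)\mapsto(v_1,\,\lambda v_1 - c\partial_x v_1 - v_2)$, which is an involution on $H^2\times H^1$ and does block-triangularize $\cL^\tau-\lambda$ against the scalar pencil. Since you explicitly fall back on the Sandstede citation rather than carrying the conjugation through, this slip does not affect the validity of your argument, but it would matter if you chose to make the cokernel correspondence explicit.
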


For $\lambda$ in the point spectrum, it is clear from Lemma \ref{lemQinv} that the dimensions of the finite-dimensional kernels are the same and, hence, the geometric multiplicity of $\lambda$ remains the same. Moreover, the mapping $\cK$ can also be used to show that the Jordan block structures of $\cL^\tau - \lambda$ and $\cT^\tau(\lambda)$ coincide, that is, the algebraic multiplicity (the length of each maximal Jordan chain) is the same whether computed for one operator or for the other.

\begin{proposition}
\label{propJordan}
The mapping $\cK$ induces a one-to-one correspondence between Jordan chains.
\end{proposition}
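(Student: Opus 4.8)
The plan is to make explicit how the map $\cK$ from Lemma \ref{lemQinv} acts on Jordan chains and to show it is a chain-preserving isomorphism. Recall that for the companion operator $\cL^\tau$ a Jordan chain of length $m$ above an eigenvalue $\lambda$ is a sequence $\{(\phi_0, \psi_0), \ldots, (\phi_{m-1}, \psi_{m-1})\} \subset \cD(\cL^\tau) = H^2 \times H^1$ with $(\cL^\tau - \lambda)(\phi_0,\psi_0)^\top = 0$, $(\phi_0,\psi_0)^\top \neq 0$, and $(\cL^\tau - \lambda)(\phi_j, \psi_j)^\top = (\phi_{j-1},\psi_{j-1})^\top$ for $j = 1, \ldots, m-1$. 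On the side of the pencil, Definition \ref{defmult} describes chains for $\cT^\tau(\lambda)$: a sequence $\{W_0, \ldots, W_{m-1}\} \subset H^1 \times H^1$ with $\cT^\tau(\lambda) W_0 = 0$, $W_0 \neq 0$, and $\cT^\tau(\lambda) W_j = \partial_\lambda \A^\tau(x,\lambda) W_{j-1}$ for $j = 1,\ldots, m-1$. The first step is to verify at the top of the chain (already done in Lemma \ref{lemQinv}): $\cK$ sends $\ker(\cL^\tau - \lambda)$ isomorphically onto $\ker \cT^\tau(\lambda)$.

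Next I would check that $\cK$ intertwines the two recursions. Concretely: given a chain $(\phi_j, \psi_j)$ for $\cL^\tau$, set $W_j := \cK(\phi_j, \psi_j)^\top = (\phi_j, \partial_x \phi_j)^\top$. Using the two scalar equations that make up $(\cL^\tau - \lambda)(\phi_j,\psi_j)^\top = (\phi_{j-1}, \psi_{j-1})^\top$, namely $c\partial_x \phi_j + \psi_j - \lambda \phi_j = \phi_{j-1}$ and $\tau^{-1}(\partial_x^2 + a)\phi_j + (c\partial_x - \tau^{-1}b)\psi_j - \lambda \psi_j = \psi_{j-1}$, I would eliminate $\psi_j$ (which equals $\lambda\phi_j - c\partial_x\phi_j + \phi_{j-1}$) and reduce to a second-order identity for $\phi_j$ that, written in first-order form for $W_j = (\phi_j,\partial_x\phi_j)^\top$, is exactly $\cT^\tau(\lambda) W_j = \partial_\lambda\A^\tau(x,\lambda) W_{j-1} + (\text{correction involving }\phi_{j-1})$. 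The key algebraic fact to pin down is that the correction terms produced by the lower-order parts of the chain assemble precisely into $\partial_\lambda \A^\tau(x,\lambda)$ acting on $W_{j-1}$; this is where one uses the explicit form $\partial_\lambda\A^\tau = \A_1^\tau + 2\lambda\A_2^\tau$ from \eqref{derivlamA} together with $\psi_{j-1} = \lambda\phi_{j-1} - c\partial_x\phi_{j-1}$. Conversely, given a chain $W_j = (w_j, \partial_x w_j)^\top$ for $\cT^\tau(\lambda)$, I would set $\phi_j := w_j$, $\psi_j := \lambda w_j - c\partial_x w_j - \sum(\text{lower corrections})$ — essentially running the computation of Lemma \ref{lemQinv} backwards at each level — and check regularity ($w_j \in H^2$ forces $\psi_j \in H^1$, as in the proof of Lemma \ref{lemQinv}) so that $(\phi_j,\psi_j)^\top \in \cD(\cL^\tau)$ and solves the $\cL^\tau$-recursion. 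Since $\cK$ is injective on the base (Lemma \ref{lemQinv}), distinct chains map to distinct chains and the correspondence preserves linear independence of the generating eigenvectors, hence preserves the length of each maximal Jordan chain and therefore the algebraic multiplicity in the sense of Definition \ref{defmult}.

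I expect the main obstacle to be purely bookkeeping: carefully matching the inhomogeneity $(\phi_{j-1},\psi_{j-1})^\top$ on the $\cL^\tau$-side against $\partial_\lambda\A^\tau(x,\lambda) W_{j-1}$ on the pencil side, because the natural substitution $\psi_j = \lambda\phi_j - c\partial_x\phi_j$ valid at the kernel level acquires cumulative correction terms once $j \geq 1$. One must be careful that the definition of the second component $\psi_j$ in terms of $w_j$ is the one forced by $\lambda$-differentiation of the relation $v_2 = \lambda v - cv_x$, i.e. $\psi_j = \lambda\phi_j - c\partial_x\phi_j + \phi_{j-1}$ (with $\phi_{-1} := 0$), and verifying that with this choice the second scalar equation of the $\cL^\tau$-recursion is an identity rather than an extra constraint. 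Once the recursive definition of $\psi_j$ is correctly guessed, the verification is a one-line substitution at each level, and no analytic subtlety beyond the elliptic-regularity remark ($H^1 \times H^1$ bounded solutions of $\cT^\tau(\lambda)W = (\text{something in }H^1)$ have first component in $H^2$) is needed. I would also remark that, because this argument is local in the sense that it only manipulates the kernels and the chain recursions, it is completely parallel to the treatment in Section 3 of \cite{LMPS16}, and it may suffice to state the correspondence and indicate the intertwining identity, leaving the inductive bookkeeping to the reader.
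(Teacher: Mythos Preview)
Your proposal is correct and follows essentially the same route as the paper: write out the two scalar equations of the $\cL^\tau$-chain, eliminate $\psi_j$ via $\psi_j = \lambda\phi_j - c\partial_x\phi_j + \phi_{j-1}$, and verify that the resulting second-order identity for $\phi_j$ is exactly $\cT^\tau(\lambda)W_j = \partial_\lambda\A^\tau(x,\lambda)\,W_{j-1}$. The paper does this substitution explicitly for the first generalized eigenvector and then states the general inductive step; your concern about \emph{cumulative} lower-order corrections is slightly overstated, since at each level only the single term $\phi_{j-1}$ appears and the identity closes up without further residue.
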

\begin{proof}
\smartqed
Suppose $(\varphi, \psi)^\top \in \ker (\cL^\tau - \lambda)$. This implies the following system of equations,
\[
\begin{aligned}
c \varphi_x + \psi &= \lambda \varphi,\\
\tau^{-1} (\partial_x^2 + a(x)) \varphi + (c \partial_x - \tau^{-1} b(x)) \psi &= \lambda \psi.
\end{aligned}
\]
Take the next element in a Jordan chain, say, $(v_1, v_2)^\top \in H^2 \times H^1$ such that
\[
(\cL^\tau - \lambda) \begin{pmatrix} v_1 \\ v_2 \end{pmatrix} = \begin{pmatrix} \varphi \\ \psi \end{pmatrix}.
\]
This yields
\[
\begin{aligned}
c \partial_x v_1 + v_2 - \lambda v_1 &= \varphi,\\
\tau^{-1} (\partial_x^2 + a(x)) v_1 + (c\partial_x - \tau^{-1} b(x)) v_2 - \lambda v_2 &= \psi.
\end{aligned}
\]
Notice that $\cK (v_1, v_2)^\top = (v_1, \partial_x v_1)^\top$, $\cK(\varphi, \psi)^\top = (\varphi, \varphi_x)^\top$. Now substitute $\psi = \lambda \varphi - c \varphi_x$ and $v_2 = \varphi + \lambda v_1 - c\partial_x v_1$ in order to obtain a scalar equation for $v_1$ and $\varphi$. The result is
\[
\tau^{-1} (\partial_x^2 + a(x)) v_1 + (c\partial_x - \tau^{-1} b(x) - \lambda) (\varphi + \lambda v_1 - c\partial_x v_1)  = \lambda \varphi - c \varphi_x.
\]
Labeling $v := v_1$, last equation reads
\[
(1-c^2 \tau) v_{xx} + (cb(x) + 2 \tau \lambda) v_x  - (\lambda^2 \tau v + \lambda b(x) - a(x))v = (b(x) + 2 \tau \lambda) \varphi - 2c\tau \varphi_x,
\]
which is equivalent to
\[
(\partial_x  - \A^\tau(x,\lambda)) \begin{pmatrix} v \\ v_x \end{pmatrix} = \Big( \A_1^\tau(x) + 2 \lambda \A^\tau_2(x) \Big) \begin{pmatrix} \varphi \\ \varphi_x \end{pmatrix}.
\]
Generalizing this procedure, we observe that solutions to
\[
(\cL^\tau - \lambda) \begin{pmatrix} v_1^j \\ v_2^j \end{pmatrix} = \begin{pmatrix} v_1^{j-1} \\ v_2^{j-1} \end{pmatrix},
\]
for some $j \geq 1$, are in one-to-one correspondence to solutions to
\[
\cT^\tau(\lambda) \cK \begin{pmatrix} v_1^j \\ v_2^j \end{pmatrix} = (\partial_\lambda \A^\tau(x,\lambda)) \cK \begin{pmatrix} v_1^{j-1} \\ v_2^{j-1} \end{pmatrix}.
\]
We conclude that a Jordan chain for the operator $\cL^\tau - \lambda$ induces a Jordan chain for $\cT^\tau(\lambda)$ with the same block structure and length.
\qed
\end{proof}

\begin{corollary}
\label{corsameptsp}
Assume $\tau > 0$. Then for any complex number $ \lambda \in \C$ there holds 
\[
\lambda \in \ptsp \quad \text{if and only if} \quad \lambda \in \ptsp(\cL^\tau),
\]
with the same algebraic and geometric multiplicities (here $\ptsp$ is the set in Definition \ref{defsigmatwo}).
\end{corollary}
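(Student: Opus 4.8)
The plan is to assemble Corollary~\ref{corsameptsp} directly from the structural results already established, namely Lemma~\ref{lemQinv}, Corollary~\ref{corsamespec}, and Proposition~\ref{propJordan}. The point-spectrum identity $\ptsp = \ptsp(\cL^\tau)$ is already contained in Corollary~\ref{corsamespec}, so the only genuinely new content to verify is the claim that the algebraic and geometric multiplicities agree. For the geometric multiplicity, I would simply invoke Lemma~\ref{lemQinv}: the map $\cK$ is a linear bijection between $\ker(\cL^\tau-\lambda)$ and $\ker\cT^\tau(\lambda)$, and a linear isomorphism of vector spaces preserves dimension, so $\dim\ker(\cL^\tau-\lambda)=\dim\ker\cT^\tau(\lambda)$ whenever either is finite (and by the Fredholm equivalence noted after Lemma~\ref{lemQinv}, finiteness on one side forces it on the other). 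Hence the geometric multiplicities coincide.

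For the algebraic multiplicity, the work is already done in Proposition~\ref{propJordan}: $\cK$ sends a Jordan chain $(v_1^0,v_2^0),(v_1^1,v_2^1),\dots$ for $\cL^\tau-\lambda$, solving $(\cL^\tau-\lambda)(v_1^j,v_2^j)^\top=(v_1^{j-1},v_2^{j-1})^\top$, to a chain $\cK(v_1^0,v_2^0),\cK(v_1^1,v_2^1),\dots$ solving $\cT^\tau(\lambda)\cK(v_1^j,v_2^j)^\top=(\partial_\lambda\A^\tau(x,\lambda))\cK(v_1^{j-1},v_2^{j-1})^\top$, which is precisely the recursion defining Jordan chains for the operator pencil $\cT^\tau(\lambda)$ in the sense of Definition~\ref{defmult}. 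Since $\cK$ is a bijection on kernels, it carries a maximal set of linearly independent kernel elements to a maximal set, and it preserves the length of each maximal chain in both directions (the inverse of $\cK$, described in the proof of Lemma~\ref{lemQinv} by $(v,v_x)^\top\mapsto(v,\lambda v-cv_x)^\top$, runs the argument backwards). Therefore the a.m.\ computed as $\sum_k m_k$ over a maximal independent set is the same for both operators.

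Putting these together: fix $\tau>0$ and $\lambda\in\C$. By Corollary~\ref{corsamespec}, $\lambda\in\ptsp$ iff $\lambda\in\ptsp(\cL^\tau)$. In that case Lemma~\ref{lemQinv} gives equality of geometric multiplicities, and Proposition~\ref{propJordan} together with the bijectivity of $\cK$ gives equality of algebraic multiplicities. I would write this as a short paragraph rather than a numbered proof, since it is pure bookkeeping over results in hand.

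The only point requiring a little care — and the closest thing to an obstacle — is making sure the correspondence of Jordan chains is genuinely \emph{order-preserving} and \emph{terminating at the same length}, i.e.\ that a chain for $\cL^\tau-\lambda$ is maximal exactly when its $\cK$-image is maximal. This follows because $\cK$ is invertible with an explicit inverse that satisfies the reverse recursion (as already shown in the proofs of Lemma~\ref{lemQinv} and Proposition~\ref{propJordan}), so a solvability obstruction at step $m$ on one side transfers to an obstruction at step $m$ on the other; I would state this explicitly in one sentence to close the argument cleanly.

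\begin{proof}
\smartqed
Fix $\tau > 0$ and $\lambda \in \C$. By Corollary \ref{corsamespec} we already have $\lambda \in \ptsp$ if and only if $\lambda \in \ptsp(\cL^\tau)$, so it remains to compare multiplicities when $\lambda$ belongs to the point spectrum. By Lemma \ref{lemQinv}, $\cK$ is a linear isomorphism from $\ker(\cL^\tau - \lambda)$ onto $\ker \cT^\tau(\lambda)$; hence these kernels have the same (finite) dimension and the geometric multiplicities coincide. For the algebraic multiplicity, Proposition \ref{propJordan} shows that $\cK$ maps a Jordan chain $\{(v_1^j, v_2^j)\}_{j=0}^{m-1}$ for $\cL^\tau - \lambda$ to the sequence $\{\cK(v_1^j, v_2^j)\}_{j=0}^{m-1}$ satisfying the pencil recursion $\cT^\tau(\lambda) \cK(v_1^j,v_2^j)^\top = (\partial_\lambda \A^\tau(x,\lambda)) \cK(v_1^{j-1},v_2^{j-1})^\top$, which is the defining relation for Jordan chains of $\cT^\tau(\lambda)$ in Definition \ref{defmult}. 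The inverse map $(v,v_x)^\top \mapsto (v, \lambda v - c v_x)^\top$ from the proof of Lemma \ref{lemQinv} satisfies the reverse recursion, so a chain for $\cL^\tau - \lambda$ is maximal of length $m$ precisely when its $\cK$-image is; equivalently, the solvability obstruction at step $m$ on one side transfers to the same step on the other. Since $\cK$ is bijective on kernels, it carries a maximal linearly independent set $\{W_1,\dots,W_l\}\subset\ker(\cL^\tau-\lambda)$ to a maximal one in $\ker\cT^\tau(\lambda)$, and the algebraic multiplicity $\sum_{k=1}^l m_k$ is unchanged. Therefore the algebraic and geometric multiplicities of $\lambda$ are the same whether computed for $\cL^\tau - \lambda$ or for $\cT^\tau(\lambda)$.
\qed
\end{proof}
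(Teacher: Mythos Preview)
Your proposal is correct and follows precisely the same approach as the paper: the corollary is stated without an independent proof, as an immediate consequence of Corollary~\ref{corsamespec} (for the equality $\ptsp=\ptsp(\cL^\tau)$), Lemma~\ref{lemQinv} (for geometric multiplicity via the bijection $\cK$), and Proposition~\ref{propJordan} (for algebraic multiplicity via the one-to-one correspondence of Jordan chains). Your added sentence on why maximality of chains is preserved in both directions is a welcome clarification but does not depart from the paper's route.
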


\begin{remark}
The results of Corollary \ref{corsamespec} and Proposition \ref{propJordan} generalize the spectral equivalence proved in the relaxed Allen-Cahn case (see Section 3 in \cite{LMPS16}). It is remarkable, however, that for the Allen-Cahn model with relaxation the associated matrix $\cL^\tau$ is a first order differential operator, whereas in the present (general) case the operator is of second order. 
\end{remark}

\section{Asymptotic limits and the essential spectrum}
\label{secess}

In this section we analyze the asymptotic equations
\begin{equation}
\label{Wasymp}
W_x = \A^\tau_\pm(\lambda) W, 
\end{equation}
wherupon the asymptotic coefficients are defined in \eqref{asympcoeff}, and 
which will allow us, in turn, to 
locate the essential spectrum of our problem.

\subsection{The asymptotic equations}

Take a look at the asymptotic coefficients \eqref{asympcoeff}. Let us denote the characteristic polynomial of 
$\A^\tau_\pm(\lambda)$ as
\begin{equation}
\label{defcharpol}
p_\pm^\tau(\mu) = \det \big( \A_\pm^\tau (\lambda) - \mu I\big). 
\end{equation}
Notice that $\mu$ is a root of $p_\pm^\tau(\mu) = 0$ if and only if $\kappa = (1-c^2 \tau) \mu$ is a root of
\[
\begin{aligned}
\det \big(\kappa I - (1-c^2 \tau) \A_\pm^\tau (\lambda)\big)  &= \det \begin{pmatrix}
                                                                       \kappa & &  -(1-c^2\tau) \\ - \tau 
\lambda^2 - \lambda b_\pm - |a_\pm| & & \, \kappa + c(b_\pm + 2 \tau \lambda) 
                                                                      \end{pmatrix}\\
&= \kappa^2 + \kappa c(b_\pm + 2 \tau \lambda) - (1 - c^2 \tau)(\tau \lambda^2 + \lambda b_\pm + |a_\pm|) \\ 
&= 0.
\end{aligned}
\]
Suppose that $\kappa = i \xi$, with $\xi \in \R$. Then the $\lambda$-roots of the equation
\begin{equation}
\label{disprel}
\xi^2 - ic\xi (b_\pm + 2 \tau \lambda) + (1-c^2\tau)(\tau \lambda^2 + b_\pm \lambda + |a_\pm|) = 0, 
\end{equation}
define algebraic curves in the complex plane, bounding the essential spectrum. We denote these curves as
\begin{equation}
\label{algcurves}
\lambda = \lambda_{1,2}^\pm(\xi), \qquad \xi \in \R. 
\end{equation}

Equation \eqref{disprel} is the \textit{dispersion relation} for the wave solutions to the constant 
coefficient asymptotic equations.

\begin{remark}
 It is clear that $\lambda = 0$ does not belong to any of the algebraic curves \eqref{algcurves}, inasmuch as 
$\xi^2 - ic\xi b_\pm + (1-c^2\tau) |a_\pm|$ has strictly positive real part for all $\xi \in \R$.
\end{remark}

\subsubsection{The case $\tau = 0$}
We first analyze these curves in the case when $\tau = 0$. Then the dispersion relation \eqref{disprel} reads
\[
 \xi^2 - ic\xi b_\pm + b_\pm \lambda + |a_\pm| = 0,
\]
and the single root is simply
\begin{equation}
\label{algcurvtau0}
\lambda_0^\pm(\xi) = -b_\pm^{-1}|a_\pm| + ic\xi - b_\pm \xi^2,
\end{equation}
for all $\xi \in \R$. These curves lie on the stable half plane with $\Re \lambda < 0$. In fact, there exist
\begin{equation}
\chi_0^\pm = \tfrac{1}{2}b_\pm^{-1}|a_\pm|, 
\end{equation}
\[
 \chi_0 = \min \{\chi_0^+, \chi_0^-\} > 0
\]
such that
\[
 \Re \lambda_0^\pm(\xi) < - \chi_0^\pm \leq -\chi_0 < 0, \qquad \text{for all} \; \xi \in \R. 
\]
In other words, there is a \textit{spectral gap}.

\subsubsection{The case $\tau > 0$}

We now examine the case when $\tau > 0$. Recall that $0 < \tau < 1/c^2$ thanks to the subcharacteristic 
condition. Let us suppose that $\lambda(\xi)$ belongs to one of the curves \eqref{algcurves} and let 
$\eta(\xi) = \Re \lambda(\xi)$, $\beta(\xi) = \Im \lambda(\xi)$. Then, take the real and imaginary parts of 
the dispersion relation \eqref{disprel} to obtain
\begin{equation}
 \label{realp}
\xi^2 + 2c\tau \xi \beta + (1-c^2\tau)\big( \tau(\eta^2 - \beta^2) + \eta b_\pm + |a_\pm|\big) = 0.
\end{equation}
\begin{equation}
\label{imagp}
-c\xi b_\pm - 2c\tau \xi \eta + (1-c^2\tau)\big( 2\tau \eta\beta + b_\pm \beta\big) = 0. 
\end{equation}
 
\begin{remark}
Upon inspection of \eqref{realp} and \eqref{imagp} we notice that if we assume that $\eta = \Re \lambda = 0$ 
for some $\xi \in \R$ then $-c\xi b_\pm + (1-c^2\tau) \beta b_\pm = 0$. Since $b_\pm > 0$ this implies that 
$\beta = c\xi/(1-c^2\tau)$. Substituting into \eqref{realp} we obtain $\xi^2 + \tau c^2 \xi^2 /(1-c^2\tau) + 
|a_\pm| = 0$, which is a contradiction with $|a_\pm| > 0$, $\tau > 0$, $1-c^2\tau > 0$. This shows that the 
algebraic curves never cross the imaginary axis; they remain in either the stable or the unstable complex half 
plane.
\end{remark}

Notice that equation \eqref{imagp} can be written as
\[
 \big( \beta - \frac{c\xi}{1-c^2\tau}\big)(b_\pm + 2\tau \eta) = 0.
\]

Thus, either
\begin{align}
\eta(\xi) &= - \frac{b_\pm}{2\tau}, \label{casei}\\
\textrm{or, } \;\; \beta(\xi) &= \frac{c\xi}{1-c^2\tau}. \label{caseii}
\end{align}

First, let us consider case \eqref{casei}. Substituting into \eqref{realp} yields
\begin{equation}
 \label{eqforbeta}
\tau \beta^2 - \big( \frac{2c\tau \xi}{1-c^2\tau}\big) \beta - |a_\pm| + \frac{b_\pm^2}{4\tau} - 
\frac{\xi^2}{1-c^2\tau} = 0.
\end{equation}
This equation has real solutions $\beta$ provided that
\[
\Delta_1 := \frac{4c^2 \tau^2 \xi^2}{(1-c^2 \tau)^2} - 4\tau \Big( -|a_\pm| + \frac{b_\pm^2}{4\tau} - 
\frac{\xi^2}{1-c^2\tau}\Big) \geq 0, 
\]
or equivalently,
\begin{equation}
 \label{star}
\xi^2 (1-c^2 \tau)^{-2} + |a_\pm| \geq \frac{b_\pm^2}{4 \tau}.
\end{equation}

On the other hand, if we consider case \eqref{caseii} then after substituting into \eqref{realp} we obtain
\begin{equation}
 \label{eqforeta}
\tau \eta^2 + b_\pm \eta + |a_\pm| + \frac{\xi^2}{(1-c^2\tau)^2} = 0.
\end{equation}
Last equation has real solutions $\eta$ if and only if
\[
 \Delta_2 := b_\pm^2 - 4\tau \Big( |a_\pm| + \frac{\xi^2}{(1-c^2\tau)^2} \Big) \geq 0,
\]
that is, when
\begin{equation}
 \label{dstar}
\xi^2(1-c^2\tau)^{-2} + |a_\pm| \leq \frac{b_\pm^2}{4\tau}.
\end{equation}
Therefore, clearly, $\sgn \Delta_2 = - \sgn \Delta_1$. We consider two cases:\\

\noindent \textit{Case (I):} Suppose that for a certain parameter value $\tau > 0$ there holds
\begin{equation}
 \label{taularge}
\frac{b_\pm^2}{4\tau} < |a_\pm|,
\end{equation}
which means that for both the asymptotic states, or for one of them, $\tau > 0$ is sufficiently large such 
that \eqref{taularge} is true. 
\begin{remark}
It is to be observed that this case happens in the example when $g \equiv 1$, $f(u) = u(1-u)(u-1/2)$ if we 
take $\tau = 1$, yielding $b_\pm = 1$, $|a_\pm| = 1/2$. 
\end{remark}

Whence, if \eqref{taularge} holds then condition \eqref{dstar} is never satisfied and \eqref{star} is always 
true. Therefore there are only real solutions for $\beta$ in \eqref{eqforbeta} inasmuch as $\Delta_1 > 0$ for 
all $\xi \in \R$. This implies that the only algebraic curve solutions $\lambda = \lambda(\xi)$ to 
\eqref{disprel} are
\begin{equation}
 \label{etabeta}
\begin{aligned}
\Re \lambda (\xi) = \eta (\xi) &= - \frac{b_\pm}{2\tau}, \\
\Im \lambda (\xi) = \beta(\xi) &= \frac{c\xi}{1-c^2\tau} \pm \frac{1}{2\tau} \sqrt{\Delta_1(\xi)},
\end{aligned}
\end{equation}
for all $\xi \in \R$. Notice that there exists $\chi_1^\pm(\tau) := b_\pm/(4\tau) > 0$ such that there is a 
spectral gap:
\[
 \Re \lambda(\xi) < - \chi_1^\pm < 0, \qquad \xi \in \R.
\]

\noindent \textit{Case (II):} Now suppose that for certain parameter values
\begin{equation}
 \label{tausmall}
\frac{b_\pm^2}{4\tau} \geq |a_\pm|.
\end{equation}

\begin{remark}
Notably, this case occurs for systems of Cattaneo-Maxwell type with $f(u) = u(1-u)(u-\alpha)$, $g(u,\tau) = 1 
- \tau f'(u)$, $\alpha \in (0,1)$. Here $g(u,\tau) > 0$ provided that
\[
 0 < \tau < \frac{3}{1-\alpha + \alpha^2},
\]
as the reader may easily verify. (This warrants hypothesis \eqref{H1} to hold.) Since $g(0,\tau) = b_- = 1 + 
\tau \alpha > 0$, $g(1,\tau) = b_+ = 1+\tau(1-\alpha) > 0$, then clearly
\begin{align*}
\frac{b_-^2}{4\tau} &= \frac{(1+\alpha\tau)^2}{4\tau} \geq \alpha = |a_-|,\\
\frac{b_+^2}{4\tau} &= \frac{(1+(1-\alpha)\tau)^2}{4\tau} \geq 1-\alpha = |a_+|,
\end{align*}
verifying the two conditions \eqref{tausmall}.  
\end{remark}

Assuming \eqref{tausmall}, let $\xi_0^\pm \geq 0$ be the nonnegative solution to
\[
(\xi_0^\pm)^2 = (1-c^2\tau)^2\big( \frac{b_\pm^2}{4\tau} - |a_\pm|\big).
\]
Henceforth, for every $\xi \in (-\xi_0^\pm,\xi_0^\pm)$ we have that
\[
 \xi^2 < (1-c^2\tau)^2\big( \frac{b_\pm^2}{4\tau} - |a_\pm|\big),
\]
condition \eqref{dstar} is satisfied, and consequently, $\Delta_2(\xi) > 0$. In that range for $\xi$ the 
solutions for $\beta$ and $\eta$ are thus given by
\[
 \beta(\xi) = \frac{c\xi}{1-c^2\tau}, \qquad \xi \in (-\xi_0^\pm,\xi_0^\pm),
\]
and by
\begin{equation}
 \label{etagood}
\eta(\xi) = \frac{1}{2\tau} \big( b_\pm \pm \sqrt{\Delta_2(\xi)}\big), \qquad \xi \in (-\xi_0^\pm,\xi_0^\pm),
\end{equation}
respectively. Observe, however, that $\Delta_1(\xi), \Delta_2(\xi) \to 0$ as $|\xi| \uparrow \xi_0^\pm$; that 
$\beta(\xi) \to \pm c\xi_0/(1-c^2\tau)$ as $\xi \to \pm \xi_0^\pm$, $|\xi| < \xi_0^\pm$; and that $\eta(\xi) 
\to -b_\pm/2\tau$ as $|\xi| \uparrow \xi_0^\pm$. This behavior guarantees the continuity of the algebraic 
curves at $|\xi| = \xi_0^\pm$, because the roots of equation \eqref{eqforbeta} at $|\xi|=\xi_0^\pm$ are
\[
 \beta(\xi_0) = \frac{\pm c\xi_0}{1-c^2\tau}
\]
(as $\Delta_1(\xi_0^\pm) = 0$), and $\eta$ is constant, given by $\eta = -b_\pm/2\tau$. Therefore, for values 
$|\xi| \geq \xi_0^\pm$, $\Delta_1$ and $\Delta_2$ switch signs, $\Delta_1$ is now positive and the solutions 
for $\eta$ and $\beta$ are given by formulas \eqref{etabeta}.

Closer inspection of \eqref{etagood} reveals that
\[
 \eta(\xi) = - \frac{b_\pm}{2\tau} \pm \sqrt{\frac{b_\pm^2}{4\tau^2} - \frac{1}{\tau}\Big( |a_\pm| + 
\frac{\xi^2}{(1-c^2\tau)^2}\Big)} \; \leq \, - \frac{b_\pm}{2\tau} + \sqrt{\frac{b_\pm^2}{4\tau^2} - 
\frac{|a_\pm|}{\tau}} \; < 0,
\]
for all $|\xi| \leq \xi_0^\pm$. Therefore, in case (II) there exists
\[
 \chi_2^\pm(\tau) = \frac{b_\pm}{4\tau} - \frac{1}{2} \sqrt{\frac{b_\pm^2}{4\tau^2} - \frac{|a_\pm|}{\tau}} 
>0,
\]
such that
\[
 \Re \lambda(\xi) < - \chi_2^\pm < 0, \qquad |\xi| \leq \xi_0^\pm,
\]
and there is also a spectral gap.

Under these considerations we now define, for each fixed $\tau \geq 0$,
\begin{equation}
 \label{defspectralgap}
0 < \chi_0^\pm (\tau) := \begin{cases}
                         \tfrac{1}{2}b_\pm^{-1}|a_\pm|, & \text{if } \; \tau = 0,\\
                         \displaystyle{\tfrac{1}{2}\Big( \frac{b_\pm}{2\tau} - \sqrt{\frac{b_\pm^2}{4\tau^2} - 
\frac{|a_\pm|}{\tau}} \, \Big)}, & \text{if } \; b_\pm^2 \geq 4\tau |a_\pm|, \, \tau > 0,\\
                         \displaystyle{\frac{b_\pm}{4\tau}}, & \text{otherwise.}
                        \end{cases}
\end{equation}

Thus we have proved the following
\begin{lemma}[spectral gap]
\label{lemspectralgap}
For each $\tau \geq 0$, there exists a uniform
\begin{equation}
 \label{defchi0}
\chi_0(\tau) = \min \{\chi_0^+(\tau), \chi_0^-(\tau)\} > 0,
\end{equation}
(where $\chi_0^\pm (\tau)$ are defined in \eqref{defspectralgap}) such that the algebraic curves $\lambda = 
\lambda_{1,2}^\pm(\xi)$, $\xi \in \R$, solutions to the dispersion relations \eqref{disprel}, satisfy
\begin{equation}
\label{spectralgapeq}
\mathrm{Re}\, \lambda_{1,2}^\pm(\xi) < - \chi_0(\tau) < 0, \qquad \xi \in \R. 
\end{equation}
\end{lemma}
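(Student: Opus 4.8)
The plan is to collect the explicit descriptions of the algebraic curves $\lambda_{1,2}^\pm(\xi)$ derived in the two preceding subsubsections directly from the dispersion relation \eqref{disprel}, and to read off a uniform upper bound on their real parts, distinguishing exactly the cases in $\tau$ that enter the definition \eqref{defspectralgap}. The structural fact doing all the work is the dichotomy already obtained by separating \eqref{disprel} into its real and imaginary parts \eqref{realp}--\eqref{imagp}: writing $\lambda(\xi) = \eta(\xi) + i\beta(\xi)$ for a point on one of the curves, one has \emph{either} \eqref{casei}, i.e. $\eta(\xi) = -b_\pm/(2\tau)$, \emph{or} \eqref{caseii}, i.e. $\beta(\xi) = c\xi/(1-c^2\tau)$, in which latter case $\eta(\xi)$ is forced to be a real root of the quadratic \eqref{eqforeta}. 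In either alternative $\eta(\xi)$ is controlled by an explicit formula, so the only thing to keep track of is which alternative is available, and this is governed by the sign of $\Delta_2$ (equivalently of $\Delta_1$).

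First I would dispose of $\tau = 0$ from the closed form \eqref{algcurvtau0}: since $\Re\lambda_0^\pm(\xi) = -b_\pm^{-1}|a_\pm| - b_\pm\xi^2 \le -b_\pm^{-1}|a_\pm| < -\tfrac{1}{2}b_\pm^{-1}|a_\pm| = -\chi_0^\pm(0)$, with $\chi_0^\pm(0) > 0$ because $|a_\pm| > 0$ and $b_\pm > 0$ by \eqref{H1}--\eqref{H2}. For $\tau > 0$ I would split exactly as in \eqref{defspectralgap}. In Case (I), i.e. $b_\pm^2 < 4\tau|a_\pm|$, the discriminant $\Delta_2(\xi)$ is negative for every $\xi$, so alternative \eqref{caseii} never occurs, the curves are given by \eqref{etabeta}, and $\Re\lambda(\xi) = -b_\pm/(2\tau) < -b_\pm/(4\tau) = -\chi_0^\pm(\tau)$. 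In Case (II), i.e. $b_\pm^2 \ge 4\tau|a_\pm|$, I would use the threshold $\xi_0^\pm$: for $|\xi| \ge \xi_0^\pm$ alternative \eqref{casei} holds so $\Re\lambda(\xi) = -b_\pm/(2\tau)$, while for $|\xi| < \xi_0^\pm$ alternative \eqref{caseii} holds and the larger root of \eqref{eqforeta} satisfies $\Re\lambda(\xi) = -\tfrac{b_\pm}{2\tau} + \sqrt{\tfrac{b_\pm^2}{4\tau^2} - \tfrac{1}{\tau}\big(|a_\pm| + \tfrac{\xi^2}{(1-c^2\tau)^2}\big)} \le -\tfrac{b_\pm}{2\tau} + \sqrt{\tfrac{b_\pm^2}{4\tau^2} - \tfrac{|a_\pm|}{\tau}} < 0$, where both the definition of the square root (on $|\xi| < \xi_0^\pm$) and the strict negativity use $|a_\pm|/\tau > 0$. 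Because this last bound is attained with equality at $\xi = 0$, I would pass to the strictly smaller but still positive number $\chi_0^\pm(\tau) = \tfrac{1}{2}\big(\tfrac{b_\pm}{2\tau} - \sqrt{\tfrac{b_\pm^2}{4\tau^2} - \tfrac{|a_\pm|}{\tau}}\big)$, which gives $\Re\lambda(\xi) < -\chi_0^\pm(\tau)$ on $|\xi| < \xi_0^\pm$; and since $\chi_0^\pm(\tau) \le \tfrac{b_\pm}{4\tau} < \tfrac{b_\pm}{2\tau}$ the same strict inequality holds on $|\xi| \ge \xi_0^\pm$ as well.

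Collecting the three cases and setting $\chi_0(\tau) := \min\{\chi_0^+(\tau), \chi_0^-(\tau)\} > 0$ then yields \eqref{spectralgapeq} for all $\xi \in \R$; as a sanity check this is consistent with the earlier remark that $\lambda = 0$ lies on none of the curves. The only delicate point — and really the only bookkeeping of any substance — is in Case (II): correctly matching the two regimes $|\xi| < \xi_0^\pm$ and $|\xi| \ge \xi_0^\pm$ across $|\xi| = \xi_0^\pm$, checking that the factor $\tfrac{1}{2}$ in the definition of $\chi_0^\pm(\tau)$ is precisely what upgrades the non-strict estimate at $\xi = 0$ to the strict inequality \eqref{spectralgapeq}, and noting that the bound $\chi_0^\pm(\tau)$ is independent of $\xi$ and hence automatically uniform over $\R$. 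No analytic input beyond the explicit formulas already obtained in the two subsubsections is needed.
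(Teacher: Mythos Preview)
Your proposal is correct and follows essentially the same route as the paper: the lemma is stated in the paper as a summary of the case-by-case computation carried out in the two preceding subsubsections, and your write-up recapitulates that computation faithfully, distinguishing $\tau=0$, Case~(I), and Case~(II) and reading off the explicit bounds on $\eta(\xi)$ in each. If anything you are slightly more explicit than the paper in Case~(II), where you check that the single constant $\chi_0^\pm(\tau)=\tfrac12\big(\tfrac{b_\pm}{2\tau}-\sqrt{\tfrac{b_\pm^2}{4\tau^2}-\tfrac{|a_\pm|}{\tau}}\big)$ dominates both the $|\xi|<\xi_0^\pm$ and $|\xi|\ge\xi_0^\pm$ regimes via $\chi_0^\pm(\tau)\le b_\pm/(4\tau)<b_\pm/(2\tau)$; the paper leaves this last comparison implicit.
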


\begin{remark}
The significance of Lemma \ref{lemspectralgap} is that there is no accumulation of essential spectrum at the 
eigenvalue $\lambda = 0$, which is an isolated eigenvalue with finite multiplicity (see Lemma \ref{lemalgm} 
below). 
Notice that for each 
finite $\tau \geq 0$, the bound $\chi_0(\tau)$ is positive. There could be accumulation of the essential 
spectrum in the case when $\tau \to +\infty$ (for which, it may happen, that $\chi_0(\tau) \to 0$), but that 
case is precluded by our hypothesis \eqref{H2}, with an upper bound $\tau < \tau_m < +\infty$. In the case 
of the relaxation model with Cattaneo-Maxwell transfer law (see equation \eqref{ACrelax}), the parameter 
values are bounded by a characteristic relaxation time associated to the reaction, $\tau_m = 1/\max_{u \in 
[0,1]} |f'(u)|$.
\end{remark}

\subsection{Hyperbolicity and consistent splitting}

For a given $\tau \geq 0$, we define the following open, connected region of the complex plane,
\begin{equation}
 \label{defOmega}
\Omega := \{\lambda \in \C \, : \, \Re \lambda > - \chi_0(\tau)\}.
\end{equation}
It properly contains the unstable complex half plane $\C_+ = \{ \Re \lambda > 0\}$. This is called the 
region of consistent splitting \cite{San02}. Denote 
$S^\tau_\pm(\lambda)$ and $U^\tau_\pm(\lambda)$ as the stable and unstable eigenspaces of 
$\A^\tau_\pm(\lambda)$, respectively.

\begin{lemma}\label{lemconsplit}
Given $\tau \geq 0$, for all $\lambda \in \Omega$ the coefficient matrices $\A^\tau_\pm(\lambda)$ have no 
center eigenspace and, moreover,
\[
 \dim S^\tau_\pm(\lambda) = \dim U^\tau_\pm(\lambda) = 1.
\]
\end{lemma}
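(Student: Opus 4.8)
The plan is to combine the spectral gap estimate of Lemma \ref{lemspectralgap} with a continuity (homotopy) argument in the spectral parameter. First I would note that, because $1-c^2\tau$ is a strictly positive real scalar, an eigenvalue $\mu$ of $\A^\tau_\pm(\lambda)$ is purely imaginary precisely when $\kappa=(1-c^2\tau)\mu$ is, and by the computation preceding \eqref{disprel} this happens if and only if $\lambda$ solves the dispersion relation \eqref{disprel} for some $\xi\in\R$, i.e. $\lambda$ lies on one of the algebraic curves \eqref{algcurves}; the value $\mu=0$ corresponds to $\xi=0$ and is covered by the same relation. Lemma \ref{lemspectralgap} places all of these curves in $\{\Re\lambda<-\chi_0(\tau)\}$, which is disjoint from $\Omega$. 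Hence for every $\lambda\in\Omega$ the matrix $\A^\tau_\pm(\lambda)$ has no eigenvalue on the imaginary axis, so in particular no center eigenspace, and $\C^2=S^\tau_\pm(\lambda)\oplus U^\tau_\pm(\lambda)$, giving $\dim S^\tau_\pm(\lambda)+\dim U^\tau_\pm(\lambda)=2$.

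Next I would fix the individual dimensions by connectedness. The matrices $\A^\tau_\pm(\lambda)$ depend polynomially, hence analytically, on $\lambda$, so their eigenvalues vary continuously with $\lambda$; since no eigenvalue crosses the imaginary axis while $\lambda$ moves within the connected open set $\Omega$, the number of eigenvalues (counted with multiplicity) lying in $\C_+$ is locally constant on $\Omega$, therefore constant. It thus suffices to evaluate this number at one convenient point of $\Omega$, and since $\C_+\subset\Omega$ I would take $\lambda$ real and positive.

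Finally, for such $\lambda$ a direct computation gives $\det\A^\tau_\pm(\lambda)=-(1-c^2\tau)^{-1}\big(\tau\lambda^2+b_\pm\lambda+|a_\pm|\big)$, which is strictly negative because $\tau\geq0$, $b_\pm>0$, $|a_\pm|>0$ and $1-c^2\tau>0$. The trace of $\A^\tau_\pm(\lambda)$ is real, so the two eigenvalues are either a complex conjugate pair --- which would force a nonnegative product --- or both real; the negative determinant forces the latter, with eigenvalues of opposite nonzero sign. Hence exactly one eigenvalue lies in $\C_+$ and one in $\C_-$ at this point, and by the previous step $\dim S^\tau_\pm(\lambda)=\dim U^\tau_\pm(\lambda)=1$ for every $\lambda\in\Omega$.

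I do not expect any genuine obstacle: the entire argument rests on the already-established spectral gap, continuity of the spectrum of an analytic matrix family, and an elementary sign computation. The only step deserving slight care is the identification in the first paragraph of the dispersion curves with the full set of parameters producing a purely imaginary eigenvalue --- in particular making sure the eigenvalue $\mu=0$ is not overlooked --- which is settled by the remark following \eqref{algcurves}.
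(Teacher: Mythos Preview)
Your proof is correct and follows essentially the same route as the paper: rule out center eigenspace via the dispersion relation and Lemma~\ref{lemspectralgap}, then use connectedness of $\Omega$ and continuity of eigenvalues to reduce to a single real test point. The only cosmetic difference is in the last step --- the paper writes down the quadratic formula for the roots $\kappa$ explicitly and reads off their signs, whereas you argue via the sign of $\det\A^\tau_\pm(\lambda)$ and reality of the trace; both reach the same conclusion with comparable effort.
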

\begin{proof}
\smartqed
Take $\lambda \in \Omega$ and suppose $\kappa = i \xi$, with $\xi \in \R$, is an eigenvalue of 
$\A_\pm^\tau(\lambda)$. Then $\lambda$ belongs to one of the algebraic curves \eqref{algcurves}. But 
\eqref{spectralgapeq} yields a contradiction with $\lambda \in \Omega$. Therefore, the matrices 
$\A_\pm^\tau(\lambda)$ have no center eigenspace.

Since $\Omega$ is a connected region of the complex plane, it suffices to compute the dimensions of 
$S_\pm^\tau(\lambda)$ and $U_\pm^\tau(\lambda)$ when $\lambda = \eta \in \R_+$, sufficiently large. $\mu$ is 
a root of $p_\pm^\tau(\mu) = \det (\A_\pm^\tau(\lambda) - \mu) = 0$ if and only if $\kappa = (1-c^2 \tau) 
\mu$ is a solution to
\begin{equation}
\label{eqkappa}
 \kappa^2 + \kappa c(b_\pm + 2 \tau \lambda) - (1 - c^2 \tau)(\tau \lambda^2 + \lambda b_\pm + |a_\pm|) = 0.
\end{equation}
Assuming $\lambda = \eta \in \R_+$, the roots are
\[
\kappa = - \frac{c}{2}(b_\pm + 2 \tau \eta) \pm \frac{1}{2} \sqrt{c^2 (b_\pm + 2\tau \eta)^2 + 
4(1-c^2\tau)(\tau \eta^2 + \eta b_\pm + |a_\pm|)}.                                                     
\]
Clearly, for each $\eta > 0$, one of the roots is positive and the other is negative. This proves the lemma.
\qed \end{proof}

The most important consequence of last lemma is the following
\begin{corollary}[stability of the essential spectrum]\label{corstabess}
For each $\tau \geq 0$, the essential spectrum is contained in the stable half-plane. More precisely,
\[
\ess \subset \{\lambda \in \C \, : \, \mathrm{Re} \, \lambda \leq - \chi_0(\tau) < 0\}.
\]
\end{corollary}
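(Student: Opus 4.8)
The plan is to exploit the characterization of the essential spectrum in terms of the failure of consistent splitting together with the spectral gap already established in Lemma~\ref{lemspectralgap}. Recall that, by Weyl's definition adopted in Definition~\ref{defsigmatwo}, $\lambda \in \ess$ precisely when the operator $\cT^\tau(\lambda) = \partial_x - \A^\tau(x,\lambda)$ either fails to be Fredholm or is Fredholm with nonzero index. A standard result (see Sandstede~\cite{San02}, or Kapitula--Promislow~\cite{KaPro13}) states that for a first order system $W_x = \A^\tau(x,\lambda) W$ with asymptotically constant, exponentially converging coefficients $\A^\tau(x,\lambda) \to \A^\tau_\pm(\lambda)$ (which holds here by the exponential decay \eqref{expdec} of $U$ and its derivatives), the operator $\cT^\tau(\lambda)$ is Fredholm if and only if both asymptotic matrices $\A^\tau_\pm(\lambda)$ are hyperbolic, i.e.\ have no purely imaginary eigenvalues; and in that case the index equals $\dim U^\tau_-(\lambda) - \dim U^\tau_+(\lambda)$, the difference of the dimensions of the unstable subspaces.

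First I would invoke Lemma~\ref{lemconsplit}: for every $\lambda \in \Omega = \{\Re\lambda > -\chi_0(\tau)\}$ the matrices $\A^\tau_\pm(\lambda)$ have no center eigenspace and satisfy $\dim S^\tau_\pm(\lambda) = \dim U^\tau_\pm(\lambda) = 1$. Hence both asymptotic matrices are hyperbolic on $\Omega$, so $\cT^\tau(\lambda)$ is Fredholm there, and since the unstable dimensions on the two ends agree ($1 = 1$), the Fredholm index is zero throughout $\Omega$. Consequently no $\lambda \in \Omega$ can belong to $\ess$, which by definition collects exactly the non-Fredholm points and the Fredholm points of nonzero index. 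Therefore $\ess \cap \Omega = \emptyset$, that is,
\[
\ess \subset \C \setminus \Omega = \{\lambda \in \C \, : \, \Re\lambda \leq -\chi_0(\tau)\},
\]
which is precisely the claimed inclusion, with $\chi_0(\tau) > 0$ by \eqref{defchi0}.

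The one point requiring a little care — and the main obstacle — is the precise invocation of the Fredholm/index dichotomy for $\cT^\tau(\lambda)$ on the domain $\bar\cD = H^1 \times H^1$: one must check that the hypotheses of the cited theorem are met, namely that $\A^\tau(x,\lambda)$ is bounded and continuous in $x$, converges to its limits $\A^\tau_\pm(\lambda)$ at an exponential rate (which follows from Lemma~\ref{lemexpdecay} and the smoothness of $f$, $g$), and that the convergence rate can be taken uniform on compact subsets of $\Omega$. All of this is immediate from the structure of the coefficients \eqref{coeffA} and the exponential decay \eqref{expdec}. A secondary subtlety is that the theorem is usually stated for the operator acting between $H^1$ and $L^2$ on scalar systems; here the target space is $L^2 \times L^2$ and the coefficient matrix is $2\times 2$, but the result applies verbatim to systems. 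I would simply cite \cite{San02} (Theorem in \S3.3) or \cite{KaPro13} for the systems version and conclude.
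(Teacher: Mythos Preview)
Your proof is correct and follows essentially the same approach as the paper: invoke Lemma~\ref{lemconsplit} to obtain hyperbolicity and equal unstable dimensions of $\A_\pm^\tau(\lambda)$ on $\Omega$, then apply the standard exponential-dichotomy/Palmer--Sandstede Fredholm theory to conclude that $\cT^\tau(\lambda)$ is Fredholm of index zero there, whence $\ess \subset \C\setminus\Omega$. The paper cites Coppel and Palmer explicitly for the dichotomy and Fredholm steps, while you cite \cite{San02} and \cite{KaPro13} directly; the content is the same.
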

\begin{proof} 
\smartqed
The proof follows standard arguments \cite{KaPro13}. Fix $\lambda \in \Omega$. Since $\A_\pm^\tau(\lambda)$ 
are hyperbolic, by exponential dichotomies 
theory (cf. Coppel \cite{Cop78}, Sandstede 
\cite{San02}) the asymptotic systems $W_x = \A_\pm^\tau(\lambda)W$ have exponential dichotomies in $x \in 
\R_+ = (0,+\infty)$ and in $x \in \R_- = (-\infty,0)$, respectively, 
with Morse indices
\[
 \begin{aligned}
i_+(\lambda) &= \dim U_+^\tau(\lambda) = 1,  \\
i_-(\lambda) &= \dim U_-^\tau(\lambda) = 1.
 \end{aligned}
\]
This implies (cf. Palmer \cite{Pal1,Pal2}, Sandstede \cite{San02}), that the variable coefficient operators 
$\cT^\tau(\lambda)$ are Fredholm as well, with index 
\[
\ind \cT^\tau(\lambda) = i_+(\lambda) - i_-(\lambda) = 
0,
\]
showing that $\Omega \subset \C\backslash \ess$, or equivalently, that $\ess \subset \C\backslash 
\Omega = \{\Re 
\lambda \leq - \chi_0(\tau)\}$, as claimed.
\qed \end{proof}

\begin{corollary}\label{corsplit}
For every $\lambda \in \Omega$, the eigenvalues of the asymptotic coefficients \eqref{asympcoeff} are given 
by
\begin{equation}
 \label{evaluesmu}
 \mu^\pm_{1,2}(\lambda) = - \frac{c}{2(1-c^2 \tau)}(b_\pm + 2 \tau \lambda) + \omega^\pm_{1,2}(\lambda),
\end{equation}
whereupon
\[
 \omega_1^\pm (\lambda) := - \frac{1}{2} \Theta_{\pm}(\lambda)^{1/2}, \qquad \omega_2^\pm (\lambda) :=  
\frac{1}{2} \Theta_{\pm}(\lambda)^{1/2},
\]
and,
\[
 \Theta_{\pm}(\lambda) = (1-c^2\tau)^{-2} \Big( c^2 b_\pm^2 + 4(\tau \lambda^2 + b_\pm \lambda + (1-c^2 
\tau)|a_\pm|) \Big).
\]
Morever, for every $\lambda \in \Omega$,
\[
 \mathrm{Re} \, \mu_1^\pm(\lambda) < 0 < \mathrm{Re} \, \mu_2^\pm(\lambda),
\]
that is, $\mu_1^+(\lambda)$ is the decaying mode at $+\infty$, and $\mu_2^-(\lambda)$ is the decaying mode at 
$-\infty$.
\end{corollary}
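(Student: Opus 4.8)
The plan is to obtain \eqref{evaluesmu} by a direct diagonalization of $\A^\tau_\pm(\lambda)$ and then to fix the signs of the real parts by appealing to the consistent-splitting Lemma \ref{lemconsplit}. First I would recall, exactly as in the proof of Lemma \ref{lemconsplit}, that $\mu$ is a root of $p^\tau_\pm(\mu)=0$ if and only if $\kappa=(1-c^2\tau)\mu$ solves the quadratic \eqref{eqkappa}, i.e. $\kappa^2+\kappa c(b_\pm+2\tau\lambda)-(1-c^2\tau)(\tau\lambda^2+\lambda b_\pm+|a_\pm|)=0$; the quadratic formula then gives
\[
\kappa=-\tfrac{c}{2}(b_\pm+2\tau\lambda)\pm\tfrac12\sqrt{\,c^2(b_\pm+2\tau\lambda)^2+4(1-c^2\tau)(\tau\lambda^2+\lambda b_\pm+|a_\pm|)\,}.
\]
A one-line expansion of the radicand shows that the cross terms cancel and it reduces to $(1-c^2\tau)^2\,\Theta_\pm(\lambda)$; dividing through by $1-c^2\tau>0$ (legitimate by the subcharacteristic condition, Proposition \ref{prop:properties}(ii)) produces precisely \eqref{evaluesmu}, with $\omega^\pm_{1,2}(\lambda)=\mp\tfrac12\Theta_\pm(\lambda)^{1/2}$.

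Next I would choose the branch of the square root and rule out degeneracies. The point is that $\Theta_\pm(\lambda)\notin(-\infty,0]$ for every $\lambda\in\Omega$: otherwise the two eigenvalues $\mu^\pm_{1,2}(\lambda)$ would share a common real part, namely that of $-\tfrac{c}{2(1-c^2\tau)}(b_\pm+2\tau\lambda)$, which is incompatible with Lemma \ref{lemconsplit} (no center eigenspace and $\dim S^\tau_\pm(\lambda)=\dim U^\tau_\pm(\lambda)=1$, so exactly one eigenvalue lies in each open half-plane). Consequently the principal branch $\Theta_\pm(\cdot)^{1/2}$ is holomorphic on the connected region $\Omega$ and satisfies $\Re\Theta_\pm(\lambda)^{1/2}>0$ there.

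Finally I would read off the ``moreover'' part. With this branch, $\Re\omega^\pm_2(\lambda)=\tfrac12\Re\Theta_\pm(\lambda)^{1/2}>0>\Re\omega^\pm_1(\lambda)$, hence $\Re\mu^\pm_2(\lambda)>\Re\mu^\pm_1(\lambda)$; combining this strict ordering with the fact (Lemma \ref{lemconsplit}) that one of the two eigenvalues lies in the open left half-plane $\{\Re\mu<0\}$ and the other in the open right half-plane $\{\Re\mu>0\}$ forces $\Re\mu^\pm_1(\lambda)<0<\Re\mu^\pm_2(\lambda)$ for all $\lambda\in\Omega$. Since solutions of $W_x=\A^\tau_\pm(\lambda)W$ attached to an eigenvalue $\mu$ behave like $e^{\mu x}$, this says exactly that $\mu^+_1(\lambda)$ is the mode of $\A^\tau_+(\lambda)$ decaying at $+\infty$ and $\mu^-_2(\lambda)$ the mode of $\A^\tau_-(\lambda)$ decaying at $-\infty$. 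An equivalent route that sidesteps the branch-cut discussion altogether is to first establish the labeling at $\lambda=\eta\in\R_+$ large --- where, as computed in the proof of Lemma \ref{lemconsplit}, the radicand is a positive real and the two roots are manifestly one positive and one negative --- and then transport the labels by continuity across the connected set $\Omega$, using Lemma \ref{lemconsplit} to ensure that neither eigenvalue ever touches the imaginary axis en route. The only genuinely delicate point is this bookkeeping of which root is $\mu_1$ and which is $\mu_2$; the remaining computations are elementary.
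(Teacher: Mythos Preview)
Your proposal is correct and, in its computational first half, matches the paper exactly: both derive \eqref{evaluesmu} from the quadratic \eqref{eqkappa} via $\kappa=(1-c^2\tau)\mu$ and a short simplification of the discriminant.

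For the sign statement, the paper takes precisely your alternative route: it checks that for $\lambda=\eta\in\R_+$ large the root with the minus sign has negative real part, then invokes connectedness of $\Omega$ and continuity/analyticity of the eigenvalues together with the hyperbolicity from Lemma \ref{lemconsplit} to propagate the labeling throughout $\Omega$. Your primary argument is a genuine (and arguably cleaner) variant: rather than transporting labels along paths, you show directly that $\Theta_\pm(\lambda)\notin(-\infty,0]$ on $\Omega$ --- since otherwise both eigenvalues would share a real part, contradicting the one-one splitting of Lemma \ref{lemconsplit} --- which legitimizes the principal branch globally and gives the strict ordering $\Re\mu_1^\pm<\Re\mu_2^\pm$ at once. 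This buys you a well-defined analytic square root on all of $\Omega$ without worrying about whether the two eigenvalue branches might swap along a loop, a point the paper's continuity argument leaves implicit. Either way the conclusion is the same, and your write-up is sound.
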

\begin{proof} 
\smartqed
 Since $p_\pm^\tau(\mu) = 0$ if and only if $\kappa = (1-c^2 \tau)\mu$ is a root of the characteristic 
equation \eqref{eqkappa}, then it is clear that for each $\lambda \in\Omega$ the eigenvalues of 
$\A_\pm^\tau(\lambda)$ are given by \eqref{evaluesmu}. A little algebra yields the expression for the 
discriminant $\Theta_\pm(\lambda)$, an analytic function of $\lambda$. From the proof of Lemma 
\ref{lemconsplit}, we know that, for $\lambda \in \R$ and $\lambda \gg 1$, the only eigenvalue with 
negative real part is $\mu_1^\pm(\lambda)$. Since $\Omega$ is connected and the eigenvalues are continuous 
(analytic) in $\lambda$, we conclude that $\Re \mu_1^\pm(\lambda) < 0$ for all $\lambda \in \Omega$ 
(otherwise, the hyperbolicity, and consequently the consistent splitting, would be violated). The same argument 
applies to $\mu_2^\pm(\lambda)$ and the conclusion follows.
\qed 
\end{proof}

\section{Point spectral stability}

\label{secptsp}

This section is devoted to showing that the point spectrum is stable. The proof presented here makes use of 
energy estimates and contrasts with the one reported in \cite{LMPS16} for the particular case of the 
Allen-Cahn model with relaxation. The former proof was based on a perturbation argument in the vicinity of 
$\tau = 0$ and a further extension to the whole parameter domain. In contrast, here we perform 
energy estimates in the frequency regime that require to apply a transformation 
on the $H^2$-eigenfunction. Thanks to its decaying behaviour, the transformed eigenfunction also belongs to 
$H^2$ and we are able to perform the energy estimates on the new spectral equation. We close the section by 
showing that the eigenvalue $\lambda = 0$ is simple and by stating the main result of the paper.

\subsection{Decay of solutions to spectral equations}

\begin{lemma}
\label{goodw}
Suppose $v \in H^2$ is a solution to the spectral equation \eqref{specprob} for some $\lambda \in 
\ptsp$ with $\mathrm{Re}\, \lambda \geq 0$ and $\lambda \in \Omega$. If we define
\begin{equation}
\label{transfw}
 w(x) = \exp \left( \frac{c}{2(1-c^2 \tau)} \int_{x_0}^x b(s) \, ds \right) v(x), \qquad x \in \R,
\end{equation}
then $w \in H^2$. Here $x_0 \in \R$ is fixed but arbitrary.
\end{lemma}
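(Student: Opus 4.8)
The plan is to reduce the claim to two facts: the eigenfunction $v$, being a bounded solution of the first order system associated to \eqref{specprob}, decays exponentially at both ends with rates governed by the spatial eigenvalues of Corollary~\ref{corsplit}; and the exponential weight in \eqref{transfw} grows at a rate which is \emph{exactly} the drift part $\nu_\pm := cb_\pm/\big(2(1-c^2\tau)\big)$ of those eigenvalues, so that weight and decay balance, with a strict margin coming from the subcharacteristic condition together with $\Re\lambda\ge 0$. Beyond this balancing, only a short sign check is needed.

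First I would set $W=(v,v_x)^\top$ and note that, since $v\in H^2$, $W\in H^1\times H^1$ is a bounded solution of $W_x=\A^\tau(x,\lambda)W$, and that by Lemma~\ref{lemexpdecay} and the definitions of $a,b$ the coefficient converges exponentially, $\A^\tau(x,\lambda)\to\A^\tau_\pm(\lambda)$ as $x\to\pm\infty$. Since $\Re\lambda\ge 0$ implies $\lambda\in\Omega$, Lemma~\ref{lemconsplit} and Corollary~\ref{corsplit} give that $\A^\tau_\pm(\lambda)$ are hyperbolic with one-dimensional stable and unstable eigenspaces, that the eigenvalues are
\[
\mu_{1,2}^\pm(\lambda)=-\nu_\pm-\tfrac{c\tau\lambda}{1-c^2\tau}+\omega_{1,2}^\pm(\lambda),\qquad \Re\mu_1^\pm(\lambda)<0<\Re\mu_2^\pm(\lambda),
\]
and that $\mu_2^\pm-\mu_1^\pm=\Theta_\pm(\lambda)^{1/2}$ has strictly positive real part, so $\Re\omega_1^\pm(\lambda)<0<\Re\omega_2^\pm(\lambda)$. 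Standard asymptotic ODE theory (Coppel \cite{Cop78}, Sandstede \cite{San02}) then forces a bounded solution to decay along the relevant hyperbolic direction: for every small $\epsilon>0$ there is $C>0$ with $|v(x)|+|v_x(x)|\le Ce^{(\Re\mu_1^+(\lambda)+\epsilon)x}$ for $x\ge 0$ and $|v(x)|+|v_x(x)|\le Ce^{(\Re\mu_2^-(\lambda)-\epsilon)x}$ for $x\le 0$.

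For the weight, exponential convergence of $b$ makes $\int_{x_0}^{\pm\infty}\big(b(s)-b_\pm\big)\,ds$ finite, so the exponent in \eqref{transfw} equals $\nu_\pm x+O(1)$ as $x\to\pm\infty$; since in addition $\psi'(x):=cb(x)/\big(2(1-c^2\tau)\big)$ is bounded, $w=e^{\psi}v$ and $w_x=e^{\psi}(v_x+\psi'v)$ obey $|w(x)|+|w_x(x)|\le Ce^{(\Re\mu_1^+(\lambda)+\nu_++\epsilon)x}$ for $x\ge 0$ and $|w(x)|+|w_x(x)|\le Ce^{(\Re\mu_2^-(\lambda)+\nu_--\epsilon)x}$ for $x\le 0$. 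Hence it is enough to establish the two strict inequalities $\Re\mu_1^+(\lambda)+\nu_+<0$ and $\Re\mu_2^-(\lambda)+\nu_->0$: choosing $\epsilon$ small then makes $w,w_x$ decay exponentially, hence lie in $L^2$, and $w_{xx}$, being through \eqref{specprob} a bounded-coefficient combination of $w$ and $w_x$ (here one uses that $a$, $b$ and $b'=g_u(U,\tau)U_x$ are bounded), also lies in $L^2$, so $w\in H^2$.

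The two inequalities are the core of the argument. From the representation above, $\Re(\mu_1^+(\lambda)+\nu_+)=-\tfrac{c\tau\Re\lambda}{1-c^2\tau}+\Re\omega_1^+(\lambda)$ and $\Re(\mu_2^-(\lambda)+\nu_-)=-\tfrac{c\tau\Re\lambda}{1-c^2\tau}+\Re\omega_2^-(\lambda)$; I would conclude by a short case distinction on $\sgn c$ (the case $c=0$ is trivial since then $\psi\equiv 0$ and $w=v$). By the subcharacteristic condition $1-c^2\tau>0$, so $\nu_\pm$ has the sign of $c$. If $c>0$ then $-c\tau\Re\lambda/(1-c^2\tau)\le 0$ by $\Re\lambda\ge 0$, which with $\Re\omega_1^+(\lambda)<0$ yields the first inequality; and $\Re\mu_2^-(\lambda)>0$ rewrites as $\Re\omega_2^-(\lambda)>\nu_-+c\tau\Re\lambda/(1-c^2\tau)$, whence $\Re(\mu_2^-(\lambda)+\nu_-)>\nu_->0$. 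If $c<0$ the roles swap: now $-c\tau\Re\lambda/(1-c^2\tau)\ge 0$, which together with $\Re\omega_2^-(\lambda)>0$ gives the second inequality, while $\Re\mu_1^+(\lambda)<0$, i.e.\ $\Re\omega_1^+(\lambda)<\nu_++c\tau\Re\lambda/(1-c^2\tau)$, gives $\Re(\mu_1^+(\lambda)+\nu_+)<\nu_+<0$. The only delicate point is precisely this sign check; it is exactly here that the hypothesis $\Re\lambda\ge 0$ is indispensable (for $c>0$, $\Re\lambda<0$ the estimate at $+\infty$ genuinely fails) and where the subcharacteristic condition is invoked to fix the sign of $\nu_\pm$, while the decay estimates and the $H^2$ bootstrap are routine.
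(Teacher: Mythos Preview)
Your proof is correct and follows essentially the same strategy as the paper: invoke exponential dichotomies (the paper cites the Gap Lemma explicitly) to pin down the decay of $v$ at $\pm\infty$ by the eigenvalues $\mu_1^+(\lambda)$, $\mu_2^-(\lambda)$ of Corollary~\ref{corsplit}, observe that the weight in \eqref{transfw} grows like $e^{\nu_\pm x}$ with $\nu_\pm$ exactly cancelling the drift part of those eigenvalues, and check that the remaining exponent has the right sign thanks to $\Re\lambda\ge 0$ and the subcharacteristic condition. The only cosmetic difference is that the paper, for $c>0$, handles the side $x<x_0$ in one line by noting that the weight is $\le 1$ there (so $|w|\le|v|$) and only analyzes the hard side in detail, whereas you treat both ends symmetrically via the sign check on $\Re(\mu_{1,2}^\pm+\nu_\pm)$; your version is a bit more systematic but otherwise equivalent.
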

\begin{proof} 
\smartqed
 Since $\lambda \in \ptsp$ there exists $W = (v,v_x)^\top \in H^1 \times H^1$ such that $\cT^\tau(\lambda) W 
= 0$. This implies, in turn, that $v \in H^2$ is a solution to the spectral equation \eqref{specprob}. To 
analyze the decaying properties of $v$ (equivalently, of $W$) we invoke the Gap Lemma \cite{GZ98,KS98}, which 
relates the decaying properties of the solutions to the variable coefficient system \eqref{Wsystem} to those 
of the solutions of the constant coefficient systems \eqref{Wasymp}, provided that $\A^\tau(x,\lambda)$ 
approaches $\A_\pm^\tau(\lambda)$ exponentially fast as $x \to \pm \infty$. For the precise statement of the 
Gap Lemma we refer the reader to Lemma A.11 in \cite{Zum04}, or Appendix C in \cite{MZ02}.

Suppose that $c > 0$. Since $b > 0$, it is clear that if $x < x_0$ then $|w| \leq |v|$ and $w$ decays like 
$v$ as $x \to - \infty$ Thus, we need to make precise the decaying behaviour of $v$ as $x \to +\infty$. By 
exponential decay of the profile \eqref{expdec}, it is clear that
\[
 |\A^\tau(x,\lambda) - \A_\pm^\tau(\lambda)| \leq C e^{-\nu |x|},
\]
as $x \to \pm \infty$, for some $C, \nu > 0$, uniformly in $\lambda$. Then, applying the Gap Lemma and 
Corollary \ref{corsplit}, the decaying solution $W$ at $+\infty$ to the variable coefficient equation behaves 
as
\[
 W(x,\lambda) = e^{\mu_1^+(\lambda)} \Big( V_1^+(\lambda) + O(e^{-\nu|x|} |V_1^+(\lambda)|) \Big), \quad x > 
0,
\]
where $V_1^+(\lambda)$ is the eigenvector of $\A_\pm^\tau(\lambda)$ associated to the eigenmode 
$\mu_1^+(\lambda)$. This imples that $v$ and $v_x$ decay, at most, as
\[
 |v|, |v_x| \leq C e^{\Re \mu_1^+(\lambda) x},
\]
as $x \to +\infty$. We then readily see, from Corollary \ref{corsplit}, that
\[
\begin{aligned}
|w| &\leq C \exp \Big( \frac{c}{2(1-c^2 \tau)} \int_{x_0}^x |b(s) - b_+| \, ds \Big) \times \\ & \qquad 
\times \exp \Big( \Big( - \frac{c \tau \Re \lambda}{2(1-c^2 \tau)} - \frac{1}{2 \sqrt{2}} \sqrt{\Re 
\Theta_+(\lambda) + |\Theta_+(\lambda)|}\Big) x \Big) \\
&\leq C \exp \Big( - \frac{c \tau (\Re \lambda)x}{2(1-c^2 \tau)}\Big) \exp \Big( - \frac{x}{2 \sqrt{2}} 
\sqrt{\Re \Theta_+(\lambda) + 
|\Theta_+(\lambda)|} \Big) \, \to 0, 
\end{aligned}
\]
as $x \to +\infty$, thanks to exponential decay of the profile, which yields
\[
 \exp \Big( \frac{c}{2(1-c^2 \tau)} \int_{x_0}^x |b(s) - b_\pm| \, ds \Big) \leq C \exp( -e^{-\nu x}) \leq C.
\]

This shows that $w$ decays exponentially fast as $x \to +\infty$. Since $v_x$ decays as the same rate as $v$, 
it is easy to verify that $w_x$ also decays exponentially fast at $+\infty$. We conclude that $w \in H^1$. 
Upon differentiation one can prove that, in fact, $w \in H^2$, as $w_{xx}$ decays exponentially fast as well 
at $+\infty$. Details are left to the dedicated reader. 

The case $c < 0$ can be treated similarly, inasmuch as the decay at $-\infty$ of the eigenfunction $W = 
(v,v_x)^\top$ is determined by the eigenmode $\mu_2^-(\lambda)$; an analogous argument applies. This 
concludes the proof of the lemma.
\qed \end{proof}

\subsection{Energy estimates}

Suppose that $\lambda \in \ptsp$, with $\Re \lambda \geq 0$ (and consequently, $\lambda \in \Omega$). Then 
there exists $W = (v, v_x)^\top \in H^1 \times H^1$ such that 
$\cT^\tau (\lambda) W = 0$. This is tantamount to have an $H^2$ solution $v$ to the spectral equation 
\eqref{specprob}. Consider the transformation
\[
 v(x) = w(x) e^{\theta(x)},
\]
where the function $\theta = \theta(x)$ is to be determined. Upon substitution into 
\eqref{specprob} we obtain 
\[
\begin{aligned}
 \lambda^2 \tau w - 2c\lambda \tau (w_x + \theta_x w) + \lambda b(x) w &= (1 - c^2 \tau) w_{xx} + \big( 
2(1-c^2 \tau) \theta_x + c b(x) \big) w_x + \\
& \; + \big( (1-c^2 \tau) (\theta_x^2 + \theta_{xx}) + cb(x) \theta_x + a(x) \big) w.
\end{aligned}
\]
Choose $\theta$ such that
\[
 \theta_x = - \frac{c}{2(1-c^2\tau)} b(x).
\]
This yields
\begin{equation}
 \label{eqsix}
 \lambda^2 \tau w - 2c \lambda \tau w_x + \frac{\lambda b(x)}{1-c^2 \tau} w =  (1 - c^2 \tau) w_{xx} + H(x) w,
\end{equation}
whereupon
\[
 H(x) := a(x) - \frac{c^2 b(x)^2}{4(1-c^2 \tau)} - \tfrac{1}{2} c b'(x).
\]

If we apply the same procedure to the eigenfunction $U_x \in H^2$ associated to the eigenvalue $\lambda = 0 
\in \ptsp$, denoting $U_x = \psi e^\theta$ we arrive at
\begin{equation}
 \label{eqsixpsi}
 0 =  (1 - c^2 \tau) \psi_{xx} + H(x) \psi.
\end{equation}
By monotonicity of the profile, $U_x > 0$, we know that $\psi > 0$ and we can solve for $H$ in 
\eqref{eqsixpsi}, yielding
\[
 H(x) = - (1-c^2 \tau) \frac{\psi_{xx}}{\psi}.
\]
Substituting back into \eqref{eqsix} we obtain
\begin{equation}
 \label{eqsix2}
 \lambda^2 \tau w - 2c \lambda \tau w_x + \frac{\lambda b(x)}{1-c^2 \tau} w =  (1 - c^2 \tau) \Big( w_{xx} - 
\frac{\psi_{xx}}{\psi} w \Big).
\end{equation}
Notice that thanks to Lemma \ref{goodw}, we have that this is an spectral equation for $w \in H^2$. We 
perform standard energy estimates on equation \eqref{eqsix2}. Multiply by $\overline{w}$ and integrate by 
parts in $\R$. The result is,
\[
\begin{aligned}
 \lambda \tau^2 \| w \|_{L^2}^2 - 2c \lambda \tau \int_{\R} \overline{w} w_x \, dx &+ \frac{\lambda}{1-c^2 
\tau} \int_{\R} b(x) |w|^2 \, dx = \\ &= (1-c^2 \tau) \left( - \int_{\R} |w_x|^2 \, dx + \int_{\R} \psi_x 
\partial_x \Big( \frac{|w|^2}{\psi} \Big) \, dx \right)
\end{aligned}
\]
Using the identity
\begin{equation*}
\psi^{2}\left|\left(\frac{w}{\psi} \right)_x \right|^{2} = - \left( \psi_x \left(\frac{|w|^{2}}{\psi} 
\right)_x-|w_x|^{2} \right),
\end{equation*}
and substituting, we obtain the estimate
\begin{equation}
\label{basicee}
\begin{aligned}
 \lambda \tau^2 \| w \|_{L^2}^2 - 2c \lambda \tau \int_{\R} \overline{w} w_x \, dx &+ \frac{\lambda}{1-c^2 
\tau} \int_{\R} b(x) |w|^2 \, dx = \\ &= - (1-c^2 \tau) \int_{\R} \psi^2 \left| \partial_x \Big( 
\frac{w}{\psi}\Big) \right|^2 \, dx.
\end{aligned}
\end{equation}

\begin{lemma}[point spectral stability]\label{lemptsp}
Suppose $\tau \geq 0$. If $\lambda \in \ptsp \cap \Omega$ then either $\lambda = 0$, or $\mathrm{Re} \, 
\lambda \leq -
\chi_1(\tau) < 0$, for some uniform $\chi_1(\tau) > 0$.
\end{lemma}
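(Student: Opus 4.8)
The plan is to run energy estimates on the transformed spectral equation \eqref{eqsix2} and its energy identity \eqref{basicee}, splitting into real and imaginary parts. First I would record the facts that drive everything: by \eqref{H2} one has $b\geq\delta_0>0$, so $\int_\R b|w|^2\,dx>0$ (the integrand cannot vanish identically, since $\lambda\in\ptsp$ forces $w\not\equiv0$); by monotonicity of the profile, $\psi>0$, hence $\int_\R\psi^2|\partial_x(w/\psi)|^2\,dx\geq0$, with equality only if $w/\psi$ is constant, i.e. only if $v$ is a multiple of $U_x$ and $\lambda=0$; and $\int_\R\overline w w_x\,dx$ is purely imaginary because $\int_\R(|w|^2)_x\,dx=0$.

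Now fix $\lambda\in\ptsp\cap\Omega$ with $\Re\lambda\geq0$. By Lemma \ref{goodw}, $w\in H^2$, so \eqref{basicee} holds; writing $\lambda=\eta+i\beta$, $A:=\tau\|w\|_{L^2}^2\geq0$, $B:=(1-c^2\tau)^{-1}\int_\R b|w|^2\,dx>0$, $Q:=(1-c^2\tau)\int_\R\psi^2|\partial_x(w/\psi)|^2\,dx\geq0$ and $\int_\R\overline w w_x\,dx=iP$ with $P\in\R$, the identity reads $\lambda^2 A+\lambda(B-2ic\tau P)+Q=0$. Taking its imaginary part gives $\beta(2A\eta+B)=2c\tau P\eta$; when $c\tau\eta\neq0$ I solve for $P$, substitute into the real part and multiply by $\eta$, obtaining
\[
\eta\,(A\eta^2+B\eta+Q)+\beta^2(A\eta+B)=0 .
\]
For $\eta\geq0$ every factor on the left is nonnegative, so both summands vanish; since $B>0$ this forces $\beta=0$ and then $B\eta=0$ forces $\eta=0$. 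The degenerate cases $\eta=0$, $c=0$, $\tau=0$ (where one may not divide by $\eta$ or by $\tau$) are dispatched directly from the imaginary (respectively real) part of the identity; in particular for $\tau=0$ the identity collapses to $\lambda=-Q/B\leq0$, a Rayleigh quotient for the self-adjoint weighted operator $\phi\mapsto-(b\psi^2)^{-1}(\psi^2\phi_x)_x$ whose ground state is the constant function, so $\lambda=0$ or $\lambda\leq-\chi_1$ with $\chi_1>0$ its spectral gap. In all cases the only point eigenvalue with $\Re\lambda\geq0$ is $\lambda=0$.

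It remains to promote this to a genuine gap. I would extend Lemma \ref{goodw} to a strip $\{\Re\lambda>-\chi_1(\tau)\}$: by Corollary \ref{corsplit} the modes $\mu_1^+,\mu_2^-$ keep strictly negative real part throughout $\Omega$, so for $\Re\lambda$ in a sufficiently narrow left-strip the exponential weight $\exp\big(\tfrac{c}{2(1-c^2\tau)}\int b\big)$ does not spoil the decay of $w$ and $w\in H^2$ again. Rerunning the computation with $\eta=-s$, $s\in(0,\chi_1)$, gives $\beta^2(B-As)=s(As^2-Bs+Q)$ together with $\beta(B-2As)=-2c\tau sP$; feeding in $|P|\leq\|w\|_{L^2}\|w_x\|_{L^2}$, the bound $(1-c^2\tau)\|w_x\|_{L^2}^2\leq Q+\|H\|_{L^\infty}\|w\|_{L^2}^2$, and $\delta_0\|w\|_{L^2}^2\leq(1-c^2\tau)B\leq\|b\|_{L^\infty}\|w\|_{L^2}^2$, one checks that for small $s$ the coefficient of $Q$ appearing in the resulting inequality is $O(s)$, which pins $Q$ below an $s$-independent constant and hence forces $\beta^2=O(s)$; thus a hypothetical eigenvalue in the strip tends to $0$ as $s\to0$. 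Since $\cT^\tau(\cdot)$ is analytic and Fredholm of index zero on $\Omega$ (Corollary \ref{corstabess}) and invertible for $\Re\lambda$ large, $\ptsp\cap\Omega$ is discrete and $\lambda=0$ is isolated; choosing $\chi_1(\tau)>0$ small enough that this strip lies inside the isolating neighbourhood of $0$ yields $\ptsp\cap\Omega\subset\{0\}\cup\{\Re\lambda\leq-\chi_1(\tau)\}$.

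The main obstacle is precisely the sliver $-\chi_1<\Re\lambda<0$: the clean sign dichotomy is lost there, so one must substitute for it the quantitative estimates from \eqref{H2} on $b$ and the control of $\|w_x\|_{L^2}$ by $Q$ to prove the ``eigenvalue must be near $0$'' statement, and then close with the isolatedness of $0\in\ptsp$ coming from analytic Fredholm theory — taking care, since the simplicity/isolation statement proved in Lemma \ref{lemalgm} appears later and may not be used here. A secondary but unavoidable chore is the bookkeeping of the degenerate parameter values $\tau=0$, $c=0$ and of the line $\Re\lambda=0$, where the elimination step is illegitimate and the identity must be exploited as it stands.
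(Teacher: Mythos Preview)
Your core energy argument matches the paper's proof almost exactly: defining the integrals $A,B,Q,P$ and eliminating $P$ from the real and imaginary parts of \eqref{basicee} is algebraically equivalent to the paper's step of multiplying the real part by $\zeta=\Re\lambda$, the imaginary part by $\beta=\Im\lambda$, and adding, which produces the same identity $|\lambda|^2 B+(\Re\lambda)\bigl(Q+|\lambda|^2 A\bigr)=0$ in your notation. The handling of the degenerate cases $\tau=0$, $c=0$, $\eta=0$ is also essentially the same as the paper's.

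Where you depart from the paper is in promoting this to a spectral gap, and here you over-engineer things. The paper does not extend Lemma~\ref{goodw} into a left half-strip, does not rerun the estimates with $\eta=-s$, and does not prove any ``eigenvalues in the strip must be near $0$'' statement. It simply notes that $\ptsp\cap\Omega$ consists of isolated eigenvalues of finite multiplicity: this is a consequence of the Fredholm-index-zero property on $\Omega$ established in Corollary~\ref{corstabess} together with analytic Fredholm theory, and it does \emph{not} depend on Lemma~\ref{lemalgm}, so there is no circularity. Discreteness in the open set $\Omega$, combined with the conclusion that $\lambda=0$ is the unique eigenvalue with $\Re\lambda\geq 0$, immediately furnishes a $\chi_1(\tau)>0$ with no other eigenvalue in the strip $\{-\chi_1<\Re\lambda\}$, because eigenvalues cannot accumulate at the interior point $0\in\Omega$. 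You in fact state this argument yourself at the end of your third paragraph; the point is that it stands alone, and your entire programme of strip estimates, bounds on $\|w_x\|$ via $Q$, and $\beta^2=O(s)$ is unnecessary. What you call ``the main obstacle'' is not an obstacle at all. A minor related issue: for $\tau=0$ your claim that the weighted Sturm--Liouville operator has a spectral gap is not justified as written (on the whole line such operators need not have purely discrete spectrum), but this too is covered by the same discreteness argument.
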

\begin{proof} 
\smartqed
The result is a consequence of the basic energy estimate \eqref{basicee}. Indeed, suppose that $\lambda \in 
\ptsp$ and $\Re \lambda \geq 0$ (and consequently, $\lambda \in \Omega$). Then after the transformation, $w = 
e^{-\theta} v \in H^2$ satisfies \eqref{basicee}. Notice that
\[
 \Re \int_{\R} \overline{w} w_x \, dx = \tfrac{1}{2} \int_{\R} \partial_x \big( |w|^2\big) \, dx = 0.
\]
First, let us assume that $\tau > 0$. For shortness, we denote,
\[
\begin{aligned}
 k_0 &:= (1-c^2 \tau) \int_{\R} \psi^2 \left| \partial_x \Big( \frac{w}{\psi}\Big) \right|^2 \, dx &\geq 0,\\
 k_1 &:= (1-c^2 \tau)^{-1} \int_{\R} b(x) |w|^2 \, dx &> 0,\\
 k_2 &:= \tau^2 \| w \|_{L^2}^2 &> 0,\\
 i k_3 &:= \int_{\R} \overline{w} w_x \, dx,
\end{aligned}
\]
with $k_j \in \R$. Notice that $k_1, k_2 > 0$ because $v$ is an eigenfunction, $\tau > 0$, and because 
of \eqref{H2}.

Let us denote $\zeta = \Re \lambda$, $\beta = \Im \lambda$. Therefore, taking the real and imaginary parts of 
\eqref{basicee} yields
\[
 \begin{aligned}
  (\zeta^2 - \beta^2) k_2 + 2c\tau \beta k_3 + \zeta k_1 + k_0 &= 0,\\
  2 \zeta \beta k_2 - 2c\tau \zeta k_3 + \beta k_1 &= 0.
 \end{aligned}
\]
Multiply the first equation by $\zeta$, the second by $\beta$, and add them up. The result is,
\[
 (\zeta^2 + \beta^2) (k_1 + \zeta k_2) + \zeta k_0 = 0,
\]
or, equivalently,
\[
 |\lambda|^2 k_1 + (\Re \lambda) \big( k_0 + |\lambda|^2 k_2 \big) = 0.
\]
Since $k_0 > 0$, $k_1, k_2 \geq 0$, this implies that $\Re \lambda \leq 0$.

Now, if we assume that $\zeta = \Re \lambda = 0$, from the equations we have that $\beta^2 k_1 = 0$. Since 
$k_1 > 0$ we conclude that $\beta = 0$ and this implies that $\lambda = 0$. On the other hand, if we assume 
that $\beta = \Im \lambda = 0$, then from the first equation we obtain,
\[
 k_2 \zeta^2 + k_1 \zeta + k_0 = 0, 
\]
or,
\[
 \zeta = \Re \lambda = - \frac{k_1}{2 k_2} \pm \frac{1}{2 k_2} \Big( k_1^2 - 4 k_2 k_0 \Big)^{1/2}.
\]
Since $k_2 k_0 \geq 0$ we have that $\Re \lambda = \zeta < 0$, a contradiction. 

We conclude that the only 
eigenvalue with $\Re \lambda = 0$ 
is $\lambda = 0$ and that, for any other eigenvalue with $\lambda \neq 0$ in $\Omega$, there holds
\[
 \Re \lambda \leq - \chi_1(\tau) < 0,
\]
for some $\chi_1(\tau) > 0$. This holds because the set $\ptsp$ comprises isolated eigenvalues with finite multiplicity. 
$-\chi_1(\tau) < 0$ is actually the real part of the first (isolated) eigenvalue different from zero. In 
other words, there is a spectral gap. 

In the case where $\tau = 0$, the basic energy estimate \eqref{basicee} yields
\[
 \lambda \int_{\R} b(x) |w|^2 \, dx = - \int_{\R} \psi^2 \left| \partial_x \Big( 
\frac{w}{\psi}\Big) \right|^2 \, dx,
\]
which implies, in turn, that $\lambda \in \R$ and $\lambda \leq 0$. 

Finally, notice that $\lambda = 0$ if and only if 
$w/\psi = 0$ a.e., which is tantamount to $v = U_x$ a.e. This concludes the proof of the lemma.
\qed \end{proof}

As a consequence of the proof of Lemma \ref{lemptsp} we have the following immediate
\begin{corollary}
\label{corgm}
 $\lambda = 0$ is an eigenvalue with $g.m. = 1$.
\end{corollary}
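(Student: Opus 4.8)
\textit{Proof proposal.} The plan is to extract the geometric multiplicity of $\lambda=0$ directly from the basic energy identity \eqref{basicee} specialized to $\lambda=0$, essentially reading off the observation already made at the end of the proof of Lemma \ref{lemptsp}. First I would recall that, by Lemma \ref{lemzeroeigenv}, the element $\Phi=(U_x,U_{xx})^\top$ belongs to $\ker\cT^\tau(0)\subset H^1\times H^1$, and that $\Phi\neq 0$ since $U_x>0$; hence $g.m.\geq 1$, and it remains only to show that $\ker\cT^\tau(0)$ is one-dimensional.

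To that end, let $W=(v,v_x)^\top\in\ker\cT^\tau(0)$ be arbitrary, so that $v\in H^2$ solves the spectral equation \eqref{specprob} with $\lambda=0$. Since $\Re\lambda=0\geq 0$ and $0\in\Omega$ (because $\chi_0(\tau)>0$ by Lemma \ref{lemspectralgap}), Lemma \ref{goodw} applies and shows that the transformed function $w:=e^{-\theta}v$, with $\theta_x=-cb(x)/(2(1-c^2\tau))$, belongs to $H^2$. Therefore the derivation leading to \eqref{basicee} is legitimate for $w$, and evaluating that identity at $\lambda=0$ annihilates the entire left-hand side, leaving
\[
0 = -(1-c^2\tau)\int_{\R}\psi^2\left|\partial_x\Big(\frac{w}{\psi}\Big)\right|^2\,dx .
\]
Because $1-c^2\tau>0$ by the subcharacteristic condition (Proposition \ref{prop:properties}(ii)) and $\psi=e^{-\theta}U_x>0$ by strict monotonicity of the profile, this forces $\partial_x(w/\psi)\equiv 0$, hence $w=c_0\psi$ for some constant $c_0\in\C$, and therefore $v=we^\theta=c_0\psi e^\theta=c_0U_x$. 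Consequently $W=c_0\Phi$, so $\ker\cT^\tau(0)=\mathrm{span}\{\Phi\}$ and $g.m.=1$. The case $\tau=0$ is identical, using the degenerate form of \eqref{basicee} recorded at the end of the proof of Lemma \ref{lemptsp}, which again yields $\partial_x(w/\psi)\equiv 0$.

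I do not expect any serious obstacle here, since the analytic work was already carried out inside the proof of Lemma \ref{lemptsp}; the corollary is really just the remark that the kernel at $\lambda=0$ cannot be larger than $\mathrm{span}\{\Phi\}$. The only point that deserves care is verifying that the change of variables of Lemma \ref{goodw} is legitimately available at $\lambda=0$ — that $0$ lies in $\Omega$ and has nonnegative real part, which is immediate from the spectral gap, and that $\psi$ is strictly positive so that dividing by it in \eqref{basicee} causes no trouble, which follows from $U_x>0$.
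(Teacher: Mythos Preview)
Your proposal is correct and follows essentially the same route as the paper: the corollary is stated there as an immediate consequence of the proof of Lemma \ref{lemptsp}, and your write-up simply spells out in detail the final observation of that proof, namely that setting $\lambda=0$ in the energy identity \eqref{basicee} forces $\partial_x(w/\psi)\equiv 0$, hence $v$ is a scalar multiple of $U_x$. Your version is in fact slightly more careful than the paper's terse remark (which contains the minor slip ``$w/\psi=0$ a.e.'' where ``$(w/\psi)_x=0$ a.e.'' is clearly intended).
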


\subsection{Simple translation eigenvalue}

We now show that the eigenvalue $\lambda = 0$ is a simple eigenvalue.

\begin{lemma}\label{lemalgm}
 The algebraic multiplicity of $\lambda = 0 \in \ptsp$ is equal to one.
\end{lemma}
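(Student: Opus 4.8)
The strategy is to argue by contradiction at the level of the first order formulation $\cT^\tau(0)$, exploiting that $g.m.=1$ (Corollary \ref{corgm}) with kernel spanned by $\Phi = (U_x,U_{xx})^\top$. If the algebraic multiplicity were at least two, by Definition \ref{defmult} there would exist $W_1 = (v,v_x)^\top \in H^1 \times H^1$ solving $\cT^\tau(0) W_1 = \partial_\lambda \A^\tau(x,0)\,\Phi$. By Proposition \ref{propJordan} this is equivalent (via $\cK$) to the existence of a Jordan chain of length two for $\cL^\tau$ at $\lambda=0$, i.e. a solution $v\in H^2$ of the inhomogeneous scalar equation obtained by differentiating the spectral equation \eqref{specprob} in $\lambda$ and evaluating at $\lambda=0$ against $U_x$; concretely
\begin{equation*}
 (1-c^2\tau)v_{xx} + cb(x)v_x + a(x)v = b(x)U_x - 2c\tau U_{xx}.
\end{equation*}
First I would write this equation cleanly and observe that its left-hand side is exactly the linearized profile operator $\cR v := (1-c^2\tau)v_{xx}+cb(x)v_x+a(x)v$, for which $\cR U_x = 0$ by differentiating \eqref{nprofileq} in $x$.

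The heart of the argument is a Fredholm solvability condition. I would put $\cR$ in self-adjoint (Sturm--Liouville) form by the integrating factor already used throughout Section \ref{secptsp}: multiplying by $\rho(x) := \exp\!\big(\tfrac{c}{1-c^2\tau}\int_{x_0}^x b(s)\,ds\big)$ turns $\cR v$ into $(1-c^2\tau)\big(\rho v_x\big)_x/\rho + a(x)v$ — more precisely the operator $\cR$ becomes formally self-adjoint with respect to the weight $\rho$, and $\rho U_x > 0$ is a positive decaying solution of the homogeneous equation (its decay is controlled exactly as in Lemma \ref{goodw}, since $\rho$ has the same exponential behaviour as the weight $e^{2\theta}$ there). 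Then the inhomogeneous equation $\cR v = b\,U_x - 2c\tau U_{xx}$ has an $L^2$ solution only if the right-hand side is orthogonal, in the $\rho\,dx$ inner product, to $U_x$ (the unique, up to scalar, decaying homogeneous solution spanning both $\ker$ and — by a Wronskian/exponential-dichotomy argument, since the asymptotic matrices are hyperbolic with one-dimensional stable and unstable spaces per Lemma \ref{lemconsplit} — the cokernel). So the obstruction reduces to showing
\begin{equation*}
 \int_{\R} \rho(x)\,U_x(x)\big( b(x)U_x(x) - 2c\tau U_{xx}(x)\big)\,dx \neq 0.
\end{equation*}

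The main obstacle is evaluating this integral and showing it is nonzero. I expect the $-2c\tau\int \rho U_x U_{xx}\,dx = -c\tau\int \rho (U_x^2)_x\,dx$ term to be integrated by parts, producing $+c\tau\int \rho_x U_x^2\,dx = \tfrac{c^2\tau}{1-c^2\tau}\int \rho\, b\, U_x^2\,dx$ (using $\rho_x = \tfrac{c}{1-c^2\tau}b\rho$); combined with the first term this collapses to $\tfrac{1}{1-c^2\tau}\int \rho\, b\, U_x^2\,dx$, which is \emph{strictly positive} because $b>0$, $\rho>0$, $U_x>0$, and $1-c^2\tau>0$ by the subcharacteristic condition \eqref{subchar}. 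Hence the solvability condition fails, no length-two Jordan chain exists, and the algebraic multiplicity is one. The delicate points to get right are the integrability of the boundary terms in that integration by parts — guaranteed by the exponential decay of $U_x,U_{xx}$ from Lemma \ref{lemexpdecay} together with the controlled growth of $\rho$ (whose logarithm converges, as in Lemma \ref{goodw}) — and the identification of the cokernel of $\cT^\tau(0)$ with $\mathrm{span}\{\rho U_x\}$, which follows from the index-zero Fredholm property (Corollary \ref{corstabess} with $0\in\Omega$) and $g.m.=1$. I would then note this also re-proves that $0$ is isolated, consistent with the spectral gap of Lemmas \ref{lemspectralgap} and \ref{lemptsp}.
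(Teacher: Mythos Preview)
Your proposal is correct and follows essentially the same route as the paper's proof. Both reduce the question to showing that the inhomogeneous equation $\cR v = b(x)U_x - 2c\tau U_{xx}$ is \emph{not} solvable in $H^2$, by pairing the right-hand side against the unique (up to scalar) element of the cokernel; the paper phrases this pairing as a Melnikov integral $\Gamma = \langle \Psi, \partial_\lambda\A^\tau(x,0)\Phi\rangle$ for the first-order system and identifies the adjoint eigenfunction as $z = U_x/h^2$ with $h = e^\theta$, whereas you obtain the same object $\rho U_x$ (note $\rho = 1/h^2$) by putting $\cR$ in Sturm--Liouville form with weight $\rho$. The integration by parts yielding $(1-c^2\tau)^{-1}\int \rho\, b\, U_x^2\, dx > 0$ is identical in both arguments.

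One small inaccuracy: you write that the logarithm of $\rho$ ``converges, as in Lemma \ref{goodw}''. It does not; $\log\rho = \tfrac{c}{1-c^2\tau}\int b$ diverges linearly. What Lemma \ref{goodw} actually uses is that $\int |b-b_\pm|\,ds$ converges, so $\rho(x)\sim C\exp\!\big(\tfrac{c\,b_\pm}{1-c^2\tau}x\big)$ at $\pm\infty$. To conclude $\rho U_x\in L^2$ you must then check that the exponential decay rate $|\mu_1^+(0)|$ of $U_x$ (from Lemma \ref{lemexpdecay}) strictly dominates the growth rate $\tfrac{c b_+}{1-c^2\tau}$ of $\rho$; this follows from the explicit formula for $\mu_1^+$ in Section \ref{secexpdecay} and the fact that $|a_\pm|>0$ (hypothesis \eqref{H1}). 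The paper handles this by direct verification that $\cA^* z = 0$ for $z = U_x/h^2$ and appealing to the decay analysis of Lemma \ref{goodw}. With that correction your argument goes through.
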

\begin{proof}
\smartqed
From Corollary \ref{corgm}, we know that $\Phi = (U_x,U_{xx})^\top \in H^1 \times H^1$ is the only 
eigenfunction associated to $\lambda = 0$. Let us denote, for simplicity, $\phi = U_x \in H^2$, 
so that $\Phi = (\phi, \phi_x)^\top$. Clearly, because of equation \eqref{nprofileq}, $\phi \in H^2$ is a 
solution to 
\[
 \cA \phi := (1-c^2 \tau) \phi_{xx} + cb(x) \phi_x + a(x) \phi = 0.
\]
This holds upon differentiation \eqref{nprofileq} with respect to $x$. The auxiliary operator, $\cA : L^2 \to 
L^2$ defined above, with domain $\cD(\cA) = H^2$, has a formal adjoint, $\cA^* : L^2 \to L^2$, given by
\[
 \cA^* \psi = (1-c^2 \tau) \psi_{xx} - cb(x) \psi_x + (a(x) -cb'(x)) \psi, \qquad \psi \in \cD(\cA^*) = H^2 \subset 
L^2.
\]

Now, for any $\lambda \in \ptsp$, the operator $\cT^\tau(\lambda)$ is Fredholm with index zero. Therefore, by 
properties of closed operators \cite{Kat80}, we have that
\[
 \dim \ker \cT^\tau(\lambda)^* = \dim \cR(\cT^\tau(\lambda))^\perp = \mathrm{codim} \, \cR(\cT^\tau(\lambda)) 
= \dim \ker \cT^\tau(\lambda).
\]
Since $\dim \ker \cT^\tau(0) = 1$ we conclude that there exists a unique bounded solution $\Psi = 
(y,z)^\top \in H^1 \times H^1$ to the adjoint equation
\[
 \cT^\tau(0)^* \Psi = - \big( \partial_x + \A^\tau(x,0)^*\big)\Psi = 0.
\]

From the expression for $\A^\tau(x,0)$ we observe that $(y,z)^\top \in H^1 \times H^1$ is a solution to the 
system
\begin{equation}
 \label{yzsyst}
 \begin{aligned}
 -a(x)z + (1-c^2 \tau) y_x &= 0,\\
 (1-c^2 \tau) y - cb(x) z + (1-c^2\tau) z_x &= 0.
 \end{aligned}
 \end{equation}
Since the coefficents are bounded and $y, z \in H^1$, by a bootstrapping argument we can verify from the 
system of equations that actually $y, z \in H^2$. Thus, upon differentiation of the second equation and 
substitution into the first one we obtain
\[
 \cA^* z = (1-c^2 \tau) z_{xx} - cb(x) z + (a(x) - b'(x)) z = 0.
\]
We conclude that $z = z(x)$ is the only bounded $H^2$-solution to $\cA^* z = 0$.

Now, like in \cite{LMPS16}, let us define the Melnikov integral
\[
 \Gamma := \langle \Psi, \big( \partial_\lambda \A^\tau(x,\lambda) \big)_{|\lambda = 0} \Phi \rangle_{L^2 
\times L^2}.
\]
It is well-known (see section 4.2.1 in \cite{San02}) that $\Gamma$ decides whether $\lambda = 0$ is a simple 
eigenvalue: if $\Gamma \neq 0$ then its algebraic multiplicity is equal to one (see also \cite{LMPS16} and the 
discussion therein). From \eqref{derivlamA} we observe that $\partial_\lambda \A^\tau(x,\lambda) 
\big)_{|\lambda = 0} = \A_1^\tau(x)$, and therefore we arrive at
\[
 \begin{aligned}
 \Gamma = \langle \Psi, \A_1^\tau(x) \Phi \rangle_{L^2 \times L^2} &= \int_{\R} \begin{pmatrix}
               y \\ z
              \end{pmatrix}^* \A_1^\tau(x) \begin{pmatrix}
               \phi \\ \phi_x
              \end{pmatrix} \, dx \\
              &= (1-c^2 \tau)^{-1} \int_{\R} \overline{z} \big( b(x) \phi - 2c\tau \phi_x \big) \, dx.
\end{aligned}
\]

Like in the argumentation leading to the proof of Lemma 3.2 in \cite{LMPS16}, a direct computation allows to 
verify that the only bounded solution to $\cA^* z = 0$ is given by $z =\phi/h^2$, where $h$ is a solution to
\[
 h_x = - \frac{c b(x)}{2 (1- c^2 \tau)} h,
\]
that is, $h(x) = e^{\theta(x)}$ as in the previous section. By the arguments of Lemma \ref{goodw} it is easy 
to verify that $z \in H^2$ inasmuch as $\phi \in H^2$. Thus, a direct computation yields
\[
 \cA^* z = \frac{1}{h^2}\Big( (1-c^2 \tau) \phi_{xx} + cb(x) \phi_x + a(x) \phi \Big) = \frac{1}{h^2} \cA 
\phi = 0,
\]
as claimed. Whence, substituting $z = \phi / h^2$ into the expression for $\Gamma$ we obtain
\[
 \begin{aligned}
(1-c^2 \tau) \Gamma &= \int_{\R} \frac{\overline{\phi}}{h^2} \big( b(x) \phi - 2c\tau \phi_x \big) \, dx \\
&= \int_{\R} \frac{b(x)}{h^2} |\phi|^2 - \frac{c\tau}{h^2} \partial_x ( |\phi|^2 ) \, dx \\
&= (1 - c^2 \tau)^{-1} \int_{\R} \frac{b(x)}{h^2} |\phi|^2 \, dx,
 \end{aligned}
\]
after integration by parts and substitution of the equation for $h$. We observe that
\[
 \Gamma = (1 - c^2 \tau)^{-2} \int_{\R} \frac{b(x)}{h^2} |\phi|^2 \, dx > 0,
\]
and the conclusion follows.
\qed
\end{proof}

\subsection{Main result}

We conclude this section by stating our main theorem.

\begin{theorem}[spectral stability with spectral gap]
\label{mainthm}
Under assumptions \eqref{H1} and \eqref{H2}, for each $\tau \in [0,\tau_m)$ fixed let $U = U(x)$ be the 
monotone traveling front solution to \eqref{hypAC}. Then this front is spectrally stable with spectral gap, 
more precisely, there exists a uniform $\chi(\tau) > 0$ such that
\[
 \sigma \subset \{ \lambda \in \C \, : \, \mathrm{Re} \, \lambda \leq - \chi(\tau) < 0 \} \cup \{ 0 \}.
\]
Moreover, $\lambda = 0$ is a simple isolated eigenvalue (with algebraic multiplicity equal to one) associated 
to translation invariance.
\end{theorem}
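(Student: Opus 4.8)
The plan is to assemble the statements proved in the previous sections; no new computation is required. First I would record that, by Definition~\ref{defsigmatwo}, $\sigma = \ess \cup \ptsp$, so it suffices to locate each piece and then fuse the two spectral gaps into a single constant. From Corollary~\ref{corstabess} the essential spectrum already lies in $\{\lambda \in \C : \Re \lambda \le -\chi_0(\tau)\}$, with $\chi_0(\tau) > 0$ the uniform constant of Lemma~\ref{lemspectralgap}; in particular the region of consistent splitting $\Omega = \{\Re \lambda > -\chi_0(\tau)\}$ contains no essential spectrum, whence $\sigma \cap \Omega = \ptsp \cap \Omega$, and every element of $\ptsp$ is an isolated eigenvalue of finite multiplicity by the adopted Weyl definition.

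Next I would control the point spectrum in $\Omega$. By Lemma~\ref{lemzeroeigenv}, $0 \in \ptsp$ with eigenfunction $\Phi = (U_x, U_{xx})^\top$ --- this is precisely the eigenvalue forced by translation invariance of \eqref{newhypAC}, since differentiating the profile equation \eqref{nprofileq} in $x$ shows $U_x$ lies in the kernel. By Lemma~\ref{lemptsp} every $\lambda \in \ptsp \cap \Omega$ with $\lambda \neq 0$ satisfies $\Re \lambda \le -\chi_1(\tau) < 0$ for the uniform constant $\chi_1(\tau) > 0$ produced there, while any $\lambda \in \ptsp \setminus \Omega$ trivially obeys $\Re \lambda \le -\chi_0(\tau)$. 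Setting $\chi(\tau) := \min\{\chi_0(\tau),\chi_1(\tau)\} > 0$ I would then conclude
\[
\sigma = \ess \cup \ptsp \subset \{\lambda \in \C : \Re \lambda \le -\chi(\tau) < 0\} \cup \{0\}.
\]

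To finish I would argue that $\lambda = 0$ is a simple isolated eigenvalue: isolation is immediate because Lemma~\ref{lemspectralgap} rules out any accumulation of essential spectrum near the origin and $\ptsp$ consists of isolated points, so $0$ is an isolated point of $\sigma$; its geometric multiplicity is one by Corollary~\ref{corgm}, and its algebraic multiplicity is one by Lemma~\ref{lemalgm} (equivalently, the Melnikov integral $\Gamma$ is strictly positive), which is exactly simplicity. By Corollary~\ref{corsameptsp} the identical conclusions transfer to the companion operator $\cL^\tau$ for $\tau > 0$. I do not anticipate any genuine obstacle here: all the substance sits in the preceding lemmas, and the only step worth a sentence is verifying that the two \emph{a priori} unrelated gaps $\chi_0(\tau)$ (essential) and $\chi_1(\tau)$ (point) may be merged into one uniform $\chi(\tau)$, which is trivial once each is known to be strictly positive for every fixed $\tau \in [0,\tau_m)$; the finiteness of $\tau_m$ enters only through hypothesis \eqref{H2}, keeping $\chi_0(\tau)$ from degenerating.
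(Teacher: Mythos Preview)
Your proposal is correct and follows essentially the same approach as the paper: the paper's own proof simply collects Corollary~\ref{corstabess} and Lemmata~\ref{lemptsp} and~\ref{lemalgm}, then sets $\chi(\tau) := \min\{\chi_0(\tau),\chi_1(\tau)\}$, which is exactly what you do (with more explanatory detail). The only minor difference is that you fold in the transfer to $\cL^\tau$ via Corollary~\ref{corsameptsp}, which the paper states separately as the theorem immediately following Theorem~\ref{mainthm}.
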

\begin{proof}
\smartqed
The conclusion follows directly by collecting the results of Corollary \ref{corstabess} and Lemmata \ref{lemptsp} 
and \ref{lemalgm}. The spectral gap is given by
\[
\chi(\tau) := \min \{ \chi_0(\tau), \chi_1(\tau) \} > 0,
\]
for each fixed $\tau \in [0, \tau_m)$, where $\chi_0$ is defined in \eqref{defchi0} and $\chi_1$ is the gap defined in Lemma \ref{lemptsp}.
\qed
\end{proof}

Notice that if $\tau > 0$ then the statement of Theorem \ref{mainthm} can be recast in terms of the spectrum of the operators $\cL^\tau$ defined in \eqref{defcLtau}. Indeed, corollaries \ref{corsamespec} and \ref{corsameptsp} imply that the spectral stability with spectral gap are also properties of the matrix operators $\cL^\tau$ when $\tau > 0$. Thus, we can state the following

\begin{theorem}
Under assumptions \eqref{H1} and \eqref{H2}, and for each fixed $0 < \tau < \tau_m$ there holds
\[
\sigma(\cL^\tau) \subset \{ \lambda \in \C \, : \, \Re \lambda < - \chi(\tau) < 0\} \cup \{ 0 \},
\]
for some uniform $\chi(\tau) > 0$. Moreover, $\lambda = 0$ is a simple isolated eigenvalue of $\cL^\tau$ with associated eigenfunction $(U_x, -cU_{xx}) \in \ker \cL^\tau$.
\end{theorem}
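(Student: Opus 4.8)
The plan is to obtain this statement as a direct corollary of Theorem~\ref{mainthm} together with the spectral equivalence of Section~\ref{secequiv}, without re-running any of the analysis. Since we assume $0 < \tau < \tau_m$, the companion operator $\cL^\tau$ of \eqref{defcLtau} is well defined, and Corollary~\ref{corsamespec} asserts that its essential and point spectra (in Weyl's sense) coincide with the sets $\ess$ and $\ptsp$ of Definition~\ref{defsigmatwo}; hence $\sigma(\cL^\tau) = \ess(\cL^\tau)\cup\ptsp(\cL^\tau) = \ess\cup\ptsp = \sigma$. Theorem~\ref{mainthm} then yields $\sigma \subset \{\Re\lambda \le -\chi(\tau)\}\cup\{0\}$ with $\chi(\tau) = \min\{\chi_0(\tau),\chi_1(\tau)\} > 0$, and replacing $\chi(\tau)$ by $\tfrac12\chi(\tau)$ upgrades $\le$ to the strict inequality in the statement. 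This settles the localization of $\sigma(\cL^\tau)$.

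For the second assertion I would proceed as follows. By Lemma~\ref{lemzeroeigenv} we have $0 \in \ptsp$, by Corollary~\ref{corgm} its geometric multiplicity is one, and by Lemma~\ref{lemalgm} its algebraic multiplicity is one as well. Corollary~\ref{corsameptsp} transfers both statements verbatim to $\cL^\tau$, so $\lambda = 0 \in \ptsp(\cL^\tau)$ is a simple eigenvalue; it is isolated because $\ptsp(\cL^\tau)$ consists of isolated eigenvalues of finite multiplicity and, by the first part, no other spectrum accumulates at the origin (the spectral gap $\chi(\tau)$). To identify the eigenfunction I would invoke the bijection $\cK$ of Lemma~\ref{lemQinv}: since $\Phi = (U_x,U_{xx})^\top$ spans $\ker\cT^\tau(0)$, its $\cK$-preimage spans $\ker(\cL^\tau - 0)$, and by the construction of $\cK$ — equivalently, by the substitution $v_1 = v$, $v_2 = \lambda v - cv_x$ of Remark~\ref{rempencil} evaluated at $\lambda = 0$ with $v = U_x$ — this preimage is $(v_1,v_2) = (U_x,\,-cU_{xx})$, which therefore spans $\ker\cL^\tau$.

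There is no real obstacle here; the entire content was established in the preceding sections, and the only point that deserves attention is verifying that the equivalence of Section~\ref{secequiv} carries over both the location of the spectrum (with the uniform gap $\chi(\tau)$) and the algebraic and geometric multiplicities of every eigenvalue — which it does, since the relevant spectral subsets are literally equal and $\cK$ is a linear isomorphism of the corresponding kernels and Jordan chains (Lemma~\ref{lemQinv} and Proposition~\ref{propJordan}). One could alternatively bypass Theorem~\ref{mainthm} and repeat the arguments of Corollary~\ref{corstabess} and Lemmata~\ref{lemptsp}--\ref{lemalgm} directly for $\cL^\tau$, but routing everything through the already-proved equivalence is shorter and avoids duplicating the energy estimates.
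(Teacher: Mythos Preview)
Your proposal is correct and follows essentially the same route as the paper: the theorem is stated there without a separate proof, preceded only by the remark that Corollaries~\ref{corsamespec} and~\ref{corsameptsp} transfer the conclusion of Theorem~\ref{mainthm} to the companion operator $\cL^\tau$. Your additional observations (the $\le$ versus $<$ adjustment and the explicit identification of the eigenfunction via $v_2 = \lambda v - cv_x$ at $\lambda=0$) are sound and merely make explicit what the paper leaves implicit.
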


\section{Discussion}
\label{secdisc}
In this paper we established the spectral stability with spectral gap of a family of traveling fronts for nonlinear wave equations of the form \eqref{hypAC} when the reaction function is of bistable type. The equations under consideration are endowed with a positive ``damping" term, $g > 0$, which generalizes the previous studied case of the Allen-Cahn equation with relaxation. To that end, we revisited the existence theory using a dynamical systems approach, more in the spirit of our previous work \cite{LMPS16}. Even though existence results are available in the literature \cite{GiKe15}, here we presented a different construction which allows us to derive a variational formula for the unique wave speed and to establish exponential decay of the profile function. Both features play a role in the stability analysis: the uniqueness of the speed is related to the algebraic multiplicity of the zero eigenvalue of the linearized problem around the front, whereas the exponential decay is crucial to locate the essential spectrum.

Our main result establishes that the spectrum of the linearized problem around the front is located in the complex half plane with negative real part, except for the translation zero eigenvalue, which is isolated with finite multiplicity. This property is also known as \textit{spectral stability with spectral gap} and prevents the accumulation of essential spectrum around zero. In this fashion, we generalize the analysis performed in \cite{LMPS16} for a particular case (the Allen-Cahn equation with relaxation) to a wider class of equations. It is important to remark that this result is more general not only in applicability but also in methodology. Indeed, the present proof makes use of energy estimates and works for the whole parameter regime, whereas the previous argument is of perturbative nature, with an extension to further relaxation times. In our opinion, the method presented here is more direct.

The establishment of spectral stability is a first step in a more general program which includes the nonlinear stability analysis of the fronts under small perturbations. Thanks to the location of the spectrum in the complex plane, we conjecture that the linearized operator around the wave is the infinitesimal generator of a $C_0$-semigroup. The generation of such semigroup and its decaying properties is a matter of future investigation. (As additional information, in the Appendix we present how to establish resolvent estimates in the case of stationary fronts with $c = 0$, yielding the generation of the semigroup via Lumer-Philips theorem.) Such analysis, also called \textit{linearized stability} in the literature \cite{KaPro13,San02}, is used to prove nonlinear stability in a key way. There exist results in the literature which guarantee nonlinear stability under the assumption of spectral stability (see, e.g., Rottmann-Matthes \cite{Rott11,Rott12a}), but they are not applicable to the generic class of equations considered here, as they are restricted to hyperbolic systems with constant coefficient first order operators. We regard the nonlinear stability of the hyperbolic fronts of equations of the form \eqref{hypAC} as an important open problem which warrants attention from the nonlinear wave propagation community.

\begin{acknowledgement}
R. G. Plaza is grateful to the Department of Information Engineering, Computer Science and Mathematics of the 
University of L'Aquila, for their hospitality during the Fall of 2017, when this research was carried out. 
This work was partially supported by the EU Project ModComShock G.A. N. 
642768.

\end{acknowledgement}

\section*{Appendix: Resolvent estimates for stationary fronts}
\addcontentsline{toc}{section}{Appendix}

Fix $\tau > 0$ and consider the space $\mathcal{X} := H^1 \times L^2$ endowed with the scalar product
\begin{equation*}
	\langle (u_1,v_1),(u_2,v_2)\rangle_{{}_{\mathcal{X}}} 
		:=\Re \langle u_1,u_2\rangle_{{}_{L^2}}
			+\tau^{-1}\Re \langle \partial_x u_1, \partial_x u_2\rangle_{{}_{L^2}}
			+\Re \langle v_1,v_2\rangle_{{}_{L^2}},
\end{equation*}
and corresponding norm
\begin{equation*}
	\|(u,v)\|_{{}_{\mathcal{X}}}= \Big( \|u\|_{{}_{L^2}}^2+\tau^{-1}\|u_x \|_{{}_{L^2}}^2 + \|v\|_{{}_{L^2}}^2 \Big)^{1/2}.
\end{equation*}

Then, for simplicity drop the $\tau > 0$ from the notation and consider the operator defined in \eqref{defcLtau},
\[
\cL \begin{pmatrix} u \\ v \end{pmatrix} = \begin{pmatrix} c \partial_x & & 1 \\ \tau^{-1} (\partial_x^2 + a(x)) & & \; c \partial_x - \tau^{-1} b(x) \end{pmatrix} \begin{pmatrix} u \\ v \end{pmatrix},
\]
as a closed, densely defined operator on $\cX$ with domain $\cD = H^2 \times H^1$. This operator can be conveniently written as
\[
\cL = \cL_0 + \cB,
\]
where
\[
\cL_0 := \begin{pmatrix} c \partial_x & & 1 \\ \tau^{-1} \partial_x^2 -1 & & c \partial_x - \tau^{-1}b(x) \end{pmatrix}, \qquad \cB := \begin{pmatrix} 0 & & 0 \\ \tau^{-1} a(x) + 1 & & 0 \end{pmatrix}.
\]

We first observe that the operator $\cL_0$ is dissipative on $\mathcal{X}$ since for any $\mathbf{w} = (u,v)^\top \in \cD$,
\begin{equation*}
	\begin{aligned}
	\langle \mathbf{w}, \cL_0 \mathbf{w} \rangle_{{}_{\mathcal{X}}} &= \Re \langle u, c u_x + v  \rangle_{{}_{L^2}} 
		+ \tau^{-1} \Re \langle u_x, cu_{xx} + v_x \rangle_{{}_{L^2}}  + \\ & \;\;\; + \Re \langle v, \tau^{-1} u_{xx} -u + cv_x - \tau^{-1} b(x) v \rangle_{{}_{L^2}} \\
	&= - \tau^{-1} \Re \langle v, b(x)v \rangle_{{}_{L^2}} \leq 0,\\
	\end{aligned}
\end{equation*}
in view of Hypothesis \eqref{H2} and having used the fact that $\Re \langle f, f_x\rangle_{{}_{L^2}} = 0$ for any $f \in H^1$. Since $\cD$ is dense in $\cX$ and by dissipativity, thanks to the Lumer-Philips theorem (see, e.g., Theorem 12.22 in \cite{ReRo04}) it suffices to show that $\cL_0 - \lambda$ is onto for real $\lambda$ sufficiently large to conclude that $\cL_0$ is the infinitesimal generator of a $C_0$-semigroup of contractions, $e^{t \cL_0}$, satisfying $\| e^{t \cL_0}\| \leq 1$. Clearly, $\cB$ is a bounded operator and $\|\cB\| \leq O(1 + \tau^{-1} \| a \|_{{}_{L^\infty}})$; since $\cL$ is a bounded perturbation of $\cL_0$, it is also the infinitesimal generator of a quasi-contractive $C_0$-semigroup, $\cS(t)$, such that
\[
\| \cS(t) \| \leq e^{t \| \cB \|} = e^{tC(1 + \tau^{-1} \| a \|_{{}_{L^\infty}})},
\]
for some $C > 0$ (see Theorem 1.1 in Pazy \cite{Pazy83}, chapter 3).

We illustrate how to prove that $\cL_0 - \lambda$ is onto for $\lambda$ real and large in the case of a stationary front with $c = 0$ by establishing a resolvent estimate. 

First, note that if $c = 0$ then the operator $\cL_0$ reduces to
\begin{equation}
\label{defopc0}
\cL_0 \begin{pmatrix} u \\ v \end{pmatrix} = \begin{pmatrix} 0 & & 1 \\ \tau^{-1} \partial_x^2 - 1 & & \; - \tau^{-1} b(x) \end{pmatrix} \begin{pmatrix} u \\ v \end{pmatrix}.
\end{equation}
For given $(\phi, \psi)^\top \in \cX$ suppose that $(u,v)^\top \in \cD$ is a solution to the resolvent equation
\[
(\lambda - \cL_0) \begin{pmatrix} u \\ v \end{pmatrix} = \begin{pmatrix} \phi \\ \psi / \tau  \end{pmatrix},
\]
for some $\lambda \in \C$. This yields the system of equations
\begin{equation}\label{reszerospeed}
	\lambda u-v=\phi,\qquad
	\tau\lambda v-u_{xx} + \tau u+b(x)v=\psi.
\end{equation}

\begin{lemma}\label{lemma:vuprime}
Let $b(x)\geq b_0>0$ for any $x\in\R$.
Given $(\phi,\psi)\in \cX = H^1 \times L^2$, let $(u,v)\in \cD = H^2 \times H^1$
be a solution to system \eqref{reszerospeed}.
Then for any $r>0$, there exists a constant $C>0$ (depending on $\tau, b_0$
and $r$) such that
\begin{equation}\label{vuprime}
	\|v\|_{{}_{L^2}}+\|u_x\|_{{}_{L^2}}
		\leq C\left( \|\psi\|_{{}_{L^2}}+\|\phi_x\|_{{}_{L^2}}+\|u\|_{{}_{L^2}}\right)
\end{equation}
for any $\lambda$ with $|\lambda|\geq r>0$ and $\Re \lambda\geq 0$.
\end{lemma}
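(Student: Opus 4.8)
\emph{Proof proposal.}
The plan is to reduce everything to a single energy identity obtained by pairing the second equation in \eqref{reszerospeed} with $\bar v$ in $L^2(\R)$, and then to close the estimate by distinguishing whether $|\Re\lambda|$ or $|\Im\lambda|$ dominates. Multiplying $\tau\lambda v-u_{xx}+\tau u+b(x)v=\psi$ by $\bar v$, integrating over $\R$, integrating by parts in $-\int_\R u_{xx}\bar v\,dx=\int_\R u_x\bar v_x\,dx$, and substituting $v_x=\lambda u_x-\phi_x$ (which follows by differentiating the first equation), one obtains
\[
\bar\lambda\,\|u_x\|_{L^2}^2+\tau\lambda\,\|v\|_{L^2}^2+\int_\R b\,|v|^2\,dx=\langle\psi,v\rangle_{L^2}+\langle u_x,\phi_x\rangle_{L^2}-\tau\langle u,v\rangle_{L^2}.
\]
By Cauchy--Schwarz the modulus of the right-hand side is at most $\mathcal{R}_0:=\|\psi\|_{L^2}\|v\|_{L^2}+\|u_x\|_{L^2}\|\phi_x\|_{L^2}+\tau\|u\|_{L^2}\|v\|_{L^2}$.

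From this identity I would extract three inequalities. Taking real parts and using $\Re\lambda\ge 0$ and $b\ge b_0>0$, the summands $(\Re\lambda)\|u_x\|_{L^2}^2$ and $\tau(\Re\lambda)\|v\|_{L^2}^2$ are nonnegative, so discarding the appropriate ones yields simultaneously $b_0\|v\|_{L^2}^2\le\mathcal{R}_0$ and $(\Re\lambda)\|u_x\|_{L^2}^2\le\mathcal{R}_0$. The first of these, after absorbing a factor $\|v\|_{L^2}$ by Young's inequality, gives for any prescribed $\delta>0$
\[
\|v\|_{L^2}^2\le\delta\,\|u_x\|_{L^2}^2+C(\delta,\tau,b_0)\big(\|\psi\|_{L^2}^2+\|u\|_{L^2}^2+\|\phi_x\|_{L^2}^2\big).
\]
Taking instead imaginary parts (and noting $\Im(\bar\lambda c)=-(\Im\lambda)c$ for real $c$) gives $|\Im\lambda|\,\big|\,\tau\|v\|_{L^2}^2-\|u_x\|_{L^2}^2\,\big|\le\mathcal{R}_0$.

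Then I would split into two cases. If $|\Im\lambda|\le|\Re\lambda|$, then $\Re\lambda\ge|\lambda|/\sqrt2\ge r/\sqrt2$, so the real-part bound already yields $\|u_x\|_{L^2}^2\le(\sqrt2/r)\,\mathcal{R}_0$; inserting the bound on $\|v\|_{L^2}^2$ into $\mathcal{R}_0$ and absorbing the surviving $\|u_x\|_{L^2}$ and $\|v\|_{L^2}$ factors with Young's inequality closes the estimate. If $|\Im\lambda|>|\Re\lambda|$, then $|\Im\lambda|\ge|\lambda|/\sqrt2\ge r/\sqrt2$, and the imaginary-part bound gives $\|u_x\|_{L^2}^2\le\tau\|v\|_{L^2}^2+(\sqrt2/r)\,\mathcal{R}_0$; choosing $\delta<1/(2\tau)$ in the displayed $\|v\|_{L^2}^2$-bound lets one absorb the $\tau\|v\|_{L^2}^2$ contribution, and a final Young's inequality again finishes. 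In both cases the resulting constant depends only on $\tau$, $b_0$ and $r$, as required.

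The hard part is precisely the uniformity of the constant in $\lambda$, and this is where both hypotheses $\Re\lambda\ge0$ and $|\lambda|\ge r$ are used. Because the problem is quadratic in $\lambda$, the more natural pairing with $\bar u$ produces a term $\tau\lambda\langle v,u\rangle_{L^2}$ whose direct estimate costs a factor $|\lambda|$ that cannot be reabsorbed; the $\bar v$-pairing avoids this, but it transfers the difficulty into the $\lambda$-dependent coefficient of $\|u_x\|_{L^2}^2$ --- the factor $\bar\lambda$ in the real part, and the sign-indefinite combination $\tau\|v\|_{L^2}^2-\|u_x\|_{L^2}^2$ in the imaginary part. The dichotomy is what guarantees that whichever of $|\Re\lambda|$, $|\Im\lambda|$ one ends up dividing by is bounded below by $r/\sqrt2$, turning that coefficient into a harmless constant. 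A minor but essential bookkeeping point is that only $\|\phi_x\|_{L^2}$, and not $\|\phi\|_{L^2}$, appears on the right, which is automatic here because $\phi$ enters the identity solely through the relation $v_x=\lambda u_x-\phi_x$.
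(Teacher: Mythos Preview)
Your proof is correct and follows essentially the same route as the paper: pair the second equation with $\bar v$, use $v_x=\lambda u_x-\phi_x$ to produce the identity $\bar\lambda\|u_x\|^2+\tau\lambda\|v\|^2+\int b|v|^2=\langle\psi,v\rangle+\langle u_x,\phi_x\rangle-\tau\langle u,v\rangle$, separate real and imaginary parts, and close by a case split on whether $\Re\lambda$ or $|\Im\lambda|$ is bounded below. The only cosmetic difference is that the paper phrases the two cases as $\Re\lambda\ge c_0$ and $|\Im\lambda|\ge\theta_0$ with auxiliary constants, whereas you make the connection to $|\lambda|\ge r$ explicit via the dichotomy $\max(|\Re\lambda|,|\Im\lambda|)\ge r/\sqrt2$.
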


\begin{proof}
\smartqed
Multiplying the second equation by $\bar v$ we obtain
\begin{equation*}
	\bigl(\tau\lambda+b(x)\bigr) |v|^2-(u_x \bar v)_x + u_x \bar{v}_x + \tau u \bar{v}=\psi \,\bar{v}.
\end{equation*}
Since $v_x = \lambda u_x - \phi_x$, there holds
\begin{equation*}
	\bigl(\tau\lambda+b(x)\bigr) |v|^2+\bar \lambda |u_x|^2 + \tau u\,\bar v-(u_x \bar{v}_x)_x
		=\psi\,\bar{v}+\bar{\phi}_x \,u_x.
\end{equation*}
Integrating in $\R$ and separating real and imaginary parts, we infer
\begin{equation*}
	\begin{aligned}
		&\bigl(\tau\Re \lambda+b_0\bigr) \|v\|_{{}_{L^2}}^2+\Re \lambda \|u_x\|_{{}_{L^2}}^2
		\leq \|\psi\|_{{}_{L^2}}\|v\|_{{}_{L^2}}+\|\phi_x\|_{{}_{L^2}}\|u_x\|_{{}_{L^2}}
			+\tau \|u\|_{{}_{L^2}}\|v\|_{{}_{L^2}},\\
		&\bigl|\Im\lambda\bigr|\,\Bigl|\tau\|v\|_{{}_{L^2}}^2-\|u_x\|_{{}_{L^2}}^2\Bigr|
		\leq \|\psi\|_{{}_{L^2}}\|v\|_{{}_{L^2}}+\|\phi_x\|_{{}_{L^2}}\|u_x\|_{{}_{L^2}}
			+ \tau \|u\|_{{}_{L^2}}\|v\|_{{}_{L^2}}.\\
	\end{aligned}
\end{equation*}
Applying Young's inequality, we deduce
\begin{equation*}
	\begin{aligned}
		&\bigl(\tau \, \Re\lambda+b_0\bigr) \|v\|_{{}_{L^2}}^2+\Re \lambda \|u_x\|_{{}_{L^2}}^2
		\leq  \frac{1}{b_0}\|\psi\|_{{}_{L^2}}^2+\frac{\tau^2}{b_0}\|u\|_{{}_{L^2}}^2
			+\|\phi_x\|_{{}_{L^2}}\|u_x\|_{{}_{L^2}}+\frac{b_0}{2}\|v\|_{{}_{L^2}}^2.
	\end{aligned}
\end{equation*}
Hence, the following two estimates hold for any choice of $\lambda$ such that
$\Re\lambda\geq 0$,
\begin{equation}\label{forvuprime}
	\begin{aligned}
		&\frac12\,b_0\|v\|_{{}_{L^2}}^2+\Re \lambda \|u_x\|_{{}_{L^2}}^2
		\leq  \frac{1}{b_0}\|\psi\|_{{}_{L^2}}^2+\frac{\tau^2}{b_0}\|u\|_{{}_{L^2}}^2
			+\|\phi_x\|_{{}_{L^2}}\|u_x\|_{{}_{L^2}}\\
		&\bigl|\Im\lambda\bigr|\,\Bigl|\tau\|v\|_{{}_{L^2}}^2-\|u_x\|_{{}_{L^2}}^2\Bigr|
		\leq \|\psi\|_{{}_{L^2}}\|v\|_{{}_{L^2}}+ \tau \|u\|_{{}_{L^2}}\|v\|_{{}_{L^2}}
			+\|\phi_x\|_{{}_{L^2}}\|u_x\|_{{}_{L^2}}
	\end{aligned}
\end{equation}
For $\Re\lambda\geq c_0>0$, there holds
\begin{equation*}
	\begin{aligned}
	b_0 \|v\|_{{}_{L^2}}^2+c_0\|u_x\|_{{}_{L^2}}^2
		&\leq \frac{1}{b_0}\|\psi\|_{{}_{L^2}}^2+\frac{\tau^2}{b_0}\|u\|_{{}_{L^2}}^2
			+\|\phi_x\|_{{}_{L^2}}\|u_x\|_{{}_{L^2}}\\
		&\leq \frac{1}{b_0}\|\psi\|_{{}_{L^2}}^2+\frac{1}{2c_0} \|\phi_x\|_{{}_{L^2}}^2
			+\frac{\tau^2}{b_0}\|u\|_{{}_{L^2}}^2+\frac{c_0}{2}\|u_x\|_{{}_{L^2}}^2.
	\end{aligned}
\end{equation*}
Thus, we deduce
\begin{equation*}
	\|v\|_{{}_{L^2}}^2+\|u_x\|_{{}_{L^2}}^2
		\leq C\left(\|\psi\|_{{}_{L^2}}^2+\|\phi_x\|_{{}_{L^2}}^2+\|u\|_{{}_{L^2}}^2\right),
\end{equation*}
for some strictly positive constant $C$ depending on $b_0, \tau$ and $c_0$.

Next, let $\lambda$ to be such that $|\Im\lambda|\geq \theta_0>0$.
Then, from the second bound in \eqref{forvuprime}, it follows
\begin{equation*}
	\begin{aligned}
	\theta_0\|u_x\|_{{}_{L^2}}^2
		&\leq \|\psi\|_{{}_{L^2}}\|v\|_{{}_{L^2}}+\|\phi_x\|_{{}_{L^2}}\|u_x\|_{{}_{L^2}}
			+\tau\|u\|_{{}_{L^2}}\|v\|_{{}_{L^2}}+\theta_0\tau\|v\|_{{}_{L^2}}^2\\
		&\leq \|\psi\|_{{}_{L^2}}\|v\|_{{}_{L^2}}
			+\frac{1}{2\theta_0}\|\phi_x\|_{{}_{L^2}}^2+\frac{\theta_0}{2}\|u_x\|_{{}_{L^2}}^2
			+\tau \|u\|_{{}_{L^2}}\|v\|_{{}_{L^2}}+\theta_0\tau\|v\|_{{}_{L^2}}^2,
	\end{aligned}
\end{equation*}
again thanks to Young's inequality, so that
\begin{equation}\label{uprime}
	\|u_x\|_{{}_{L^2}}^2
		\leq C\left(\|\psi\|_{{}_{L^2}}^2+\|\phi_x\|_{{}_{L^2}}^2
			+\|u\|_{{}_{L^2}}^2+\|v\|_{{}_{L^2}}^2\right),
\end{equation}
for some strictly positive constant depending on $\tau$ and $\theta_0$.
Hence, from the first estimate in \eqref{forvuprime},
we deduce for $\Re\lambda\geq 0$ and $|\Im\lambda|\geq \theta_0>0$, that
\begin{equation*}
	\begin{aligned}
	\frac12\,b_0\|v\|_{{}_{L^2}}^2
		&\leq  \frac{1}{b_0}\|\psi\|_{{}_{L^2}}^2+\frac{\tau^2}{b_0}\|u\|_{{}_{L^2}}^2
			+\|\phi_x\|_{{}_{L^2}}\|u_x\|_{{}_{L^2}}\\
		&\leq  \frac{1}{b_0}\|\psi\|_{{}_{L^2}}^2+\frac{\tau^2}{b_0}\|u\|_{{}_{L^2}}^2
			+\frac{1}{2\eta}\|\phi_x\|_{{}_{L^2}}^2+\frac{\eta}{2}\|u_x\|_{{}_{L^2}}^2,
	\end{aligned}
\end{equation*}
for any $\eta>0$. By choosing $\eta$ sufficiently small and taking advantage of \eqref{uprime}, 
we deduce
\begin{equation*}
	\|v\|_{{}_{L^2}}^2
		\leq C\left(\|\psi\|_{{}_{L^2}}^2+\|\phi_x\|_{{}_{L^2}}^2+\|u\|_{{}_{L^2}}^2\right),
\end{equation*}
for some strictly positive constant $C>0$ depending on $\tau, b_0$ and $\theta_0$.
\qed
\end{proof}

Thanks to Lemma \ref{lemma:vuprime}, it is enough to estimate $u$ in $L^2$.
To this aim, we state and prove the following elementary result.

\begin{lemma}
Let $0\leq A\leq B$ with $B>0$.
Given $c_0>0$, set $\Sigma_0:=\{(x,y)\,:\,x\geq 0, |y|\geq c_0\}$. Then
\begin{equation}\label{somesup}
	\sup_{(x,y)\in\Sigma_0} \frac{1+A\sqrt{x^2+y^2}}{\bigl(1+B\,x\bigr)|y|}
	\leq A+\frac{1}{y_0}.
\end{equation}
\end{lemma}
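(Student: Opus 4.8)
The plan is to prove this by a direct pointwise estimate on $\Sigma_0$ and then pass to the supremum; no compactness or differential calculus is needed, and (reading the statement with $y_0=c_0$, as the notation clearly intends) the right-hand side is $A+1/c_0$. First I would use subadditivity of the square root: for $x\ge 0$ one has $\sqrt{x^{2}+y^{2}}\le x+|y|$, so the numerator obeys $1+A\sqrt{x^{2}+y^{2}}\le 1+Ax+A|y|$. Next, since $0\le A\le B$ and $x\ge 0$, we have $Ax\le Bx$, hence $1+Ax\le 1+Bx$, which gives
\[
1+A\sqrt{x^{2}+y^{2}}\le (1+Bx)+A|y|.
\]

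Dividing both sides by the strictly positive quantity $(1+Bx)\,|y|$ (positivity holds because $x\ge 0$, $B>0$, and $|y|\ge c_0>0$) yields
\[
\frac{1+A\sqrt{x^{2}+y^{2}}}{(1+Bx)\,|y|}\;\le\;\frac{1}{|y|}+\frac{A}{1+Bx}.
\]
Then I would bound the two terms on the right separately: on $\Sigma_0$ one has $|y|\ge c_0$, so $1/|y|\le 1/c_0$; and $Bx\ge 0$, so $A/(1+Bx)\le A$. Summing these gives the uniform pointwise bound $A+1/c_0$ on $\Sigma_0$, and taking the supremum over $(x,y)\in\Sigma_0$ proves \eqref{somesup}.

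I do not expect any genuine obstacle here: the only point requiring a moment's thought is recognizing that the hypothesis $A\le B$ is precisely what lets one absorb the cross term $Ax$ into the denominator factor $1+Bx$, while the constraint $|y|\ge c_0$ is exactly what keeps $1/|y|$ bounded. Everything else is an elementary application of $\sqrt{a+b}\le\sqrt a+\sqrt b$ and monotonicity.
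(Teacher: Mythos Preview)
Your proof is correct and, in fact, more direct than the paper's. The paper argues as follows: fixing $|y|\geq y_0$, it sets $F(x):=M(1+Bx)|y|-A\sqrt{x^{2}+y^{2}}$ with $M=A+1/y_0$, observes that $F$ is concave in $x$ on $[0,\infty)$ (since $-A\sqrt{x^{2}+y^{2}}$ is concave), and then checks that $F(0)\geq 1$ and that the slope at $x\to+\infty$ is nonnegative; concavity then forces $F(x)\geq 1$ for all $x\geq 0$, which is equivalent to the desired bound. Your route avoids the concavity detour entirely by using the elementary inequality $\sqrt{x^{2}+y^{2}}\le x+|y|$, then absorbing $Ax$ into $1+Bx$ via $A\le B$; this splits the fraction into two pieces that are each trivially bounded on $\Sigma_0$. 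Both arguments hinge on the same two hypotheses in the same way (the bound $A\le B$ controls the $x$-growth, the constraint $|y|\ge c_0$ controls the $1/|y|$ term), but yours is shorter and uses nothing beyond $\sqrt{a+b}\le\sqrt a+\sqrt b$, whereas the paper's approach would generalize more readily to numerators that are concave in $x$ but not subadditive.
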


\begin{proof}
\smartqed
Fix $c_0>0$ and $y\geq c_0$ and consider $y$ such that $|y|\geq y_0$
We want to prove that $M$ is such that
\begin{equation*}
	F(x):=M\bigl(1+B\,x\bigr)|y|-A\sqrt{x^2+y^2}\geq 1, \qquad\qquad \forall x\geq 0.
\end{equation*}
Since the function $F$ is concave, it is enough to require that the condition $F(x)\geq 1$
is satisfied at $x=0$ and at $x=+\infty$. 
The former condition is satisfied if  $M\geq A+1/y_0$;
the latter, if  $M>A/By_0$.
Since $B>A$, the first condition implies the second.
\qed
\end{proof}

\begin{lemma}\label{lemma:uell2}
Let $0<b_0\leq b(x)\leq b_1$ for any $x\in\R$.
Given $(\phi,\psi)\in \cX = H^1 \times L^2$, let $(u,v)\in \cD = H^2 \times H^1$
be such that \eqref{reszerospeed} holds.
Then there exists $M>0$ such that for any $\theta_0>0$, there exists a constant
$C>0$ (depending on $\tau, b_0, M$ and $\theta_0$) such that
\begin{equation}\label{uell2}
	\|u\|_{{}_{L^2}}\leq C\left(\|\phi\|_{{}_{L^2}}+\|\psi\|_{{}_{L^2}}\right),
\end{equation}
for any $\lambda$ with either $\Re\lambda\geq M$ or $|\Im\lambda|\geq \theta_0>0$.
\end{lemma}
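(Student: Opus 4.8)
The plan is to eliminate the component $v$ from the resolvent system \eqref{reszerospeed}, reduce matters to a single second order equation for $u$, and then extract the $L^2$ bound \eqref{uell2} by testing against $\bar u$ and separating the two regimes $\Re\lambda\geq M$ and $|\Im\lambda|\geq\theta_0$. Throughout I would work, as in Lemma \ref{lemma:vuprime}, in the half plane $\Re\lambda\geq 0$. Substituting $v=\lambda u-\phi$ (the first equation in \eqref{reszerospeed}) into the second one gives the scalar equation
\[
-u_{xx}+\bigl(\tau\lambda^{2}+\tau+\lambda\, b(x)\bigr)u=g,\qquad g:=\psi+(\tau\lambda+b(x))\phi ,
\]
and multiplying by $\bar u$ and integrating over $\R$ (with no boundary contribution, since $u\in H^{2}$) produces the identity
\[
\|u_x\|_{{}_{L^2}}^{2}+(\tau\lambda^{2}+\tau)\|u\|_{{}_{L^2}}^{2}+\lambda\int_{\R}b(x)|u|^{2}\,dx=\langle g,u\rangle_{{}_{L^2}} .
\]
I would also record the elementary bound $\|g\|_{{}_{L^2}}\leq\|\psi\|_{{}_{L^2}}+(\tau|\lambda|+b_{1})\|\phi\|_{{}_{L^2}}$, valid by Hypothesis \eqref{H2}.

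For the regime $\Re\lambda\geq M$ (any fixed $M>0$ suffices here), I would divide the identity by $\lambda$ and take the real part. Since $\Re(\lambda^{-1})=\Re\lambda/|\lambda|^{2}\geq0$ and $\Re(\lambda+\lambda^{-1})=\Re\lambda\,(1+|\lambda|^{-2})\geq0$, every term on the left-hand side becomes nonnegative; discarding all but $\int_{\R}b|u|^{2}\,dx\geq b_{0}\|u\|_{{}_{L^2}}^{2}$ yields $b_{0}\|u\|_{{}_{L^2}}^{2}\leq|\lambda|^{-1}\|g\|_{{}_{L^2}}\|u\|_{{}_{L^2}}$, hence $\|u\|_{{}_{L^2}}\leq\|g\|_{{}_{L^2}}/(b_{0}|\lambda|)$. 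Inserting the bound for $\|g\|_{{}_{L^2}}$ and using $|\lambda|\geq M$ then gives \eqref{uell2} with a constant depending only on $\tau,b_{0},b_{1},M$.

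For the regime $|\Im\lambda|\geq\theta_0$ I would instead use the imaginary part of the identity. With $\zeta=\Re\lambda\geq0$ and $\beta=\Im\lambda$ it reads $\beta\bigl(2\tau\zeta\|u\|_{{}_{L^2}}^{2}+\int_{\R}b|u|^{2}\,dx\bigr)=\Im\langle g,u\rangle_{{}_{L^2}}$, and since $\zeta\geq0$ and $b\geq b_{0}$ this gives $(b_{0}+2\tau\zeta)|\beta|\,\|u\|_{{}_{L^2}}^{2}\leq\|g\|_{{}_{L^2}}\|u\|_{{}_{L^2}}$, i.e. $\|u\|_{{}_{L^2}}\leq\|g\|_{{}_{L^2}}/\bigl((b_{0}+2\tau\zeta)|\beta|\bigr)$. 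Bounding $\|g\|_{{}_{L^2}}$ by a constant multiple of $\bigl(1+\tau\sqrt{\zeta^{2}+\beta^{2}}\bigr)(\|\psi\|_{{}_{L^2}}+\|\phi\|_{{}_{L^2}})$ and using the elementary inequality $b_{0}+2\tau\zeta\geq\min\{b_{0},2\}\,(1+\tau\zeta)$ valid for $\zeta\geq0$, one is left to bound
\[
\frac{1+\tau\sqrt{\zeta^{2}+\beta^{2}}}{(1+\tau\zeta)\,|\beta|}
\]
uniformly over $\{\zeta\geq0,\ |\beta|\geq\theta_0\}$, which is precisely what the estimate \eqref{somesup} supplies (with $x=\zeta$, $y=\beta$, $A=B=\tau$ and lower bound $\theta_0$), bounding it by $\tau+\theta_0^{-1}$. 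This produces \eqref{uell2} with a constant depending on $\tau,b_{0},b_{1},\theta_0$, and since $\{\Re\lambda\geq M\}\cup\{|\Im\lambda|\geq\theta_0\}$ is covered by the two cases, the argument would be complete.

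The main obstacle is that eliminating $v$ forces the inhomogeneity $g$ to pick up the term $\tau\lambda\phi$, so that $\|g\|_{{}_{L^2}}$ grows \emph{linearly} in $|\lambda|$; a crude energy estimate would therefore lose a full power of $|\lambda|$. The point of splitting into the two regimes is exactly to recover that power: when $\Re\lambda$ is large the extra factor $|\lambda|^{-1}$ obtained by dividing the energy identity by $\lambda$ compensates, while when $|\Im\lambda|$ stays bounded away from zero the denominator $(b_{0}+2\tau\Re\lambda)\,|\Im\lambda|$ does — and the content of \eqref{somesup} is precisely that this denominator dominates $1+\tau|\lambda|$ on $\{\Re\lambda\geq0,\ |\Im\lambda|\geq\theta_0\}$. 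A minor point to keep in mind is that every sign manipulation above relies on $\Re\lambda\geq0$, in line with the standing convention of the preceding lemmas.
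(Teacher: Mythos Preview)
Your proof is correct and follows essentially the same route as the paper: both arguments substitute $v=\lambda u-\phi$, test the resulting scalar equation against $\bar u$, and split into the two regimes, handling $|\Im\lambda|\geq\theta_0$ via the imaginary part together with \eqref{somesup}. The one genuine difference is in the regime $\Re\lambda\geq M$: the paper works directly with the real and imaginary parts of the integrated identity and then combines them under a cone restriction $|\Im\lambda|\leq m_0\Re\lambda$ for $\Re\lambda$ large, whereas you first divide the identity by $\lambda$ and only then take the real part. Your variant is a little cleaner---it covers all of $\{\Re\lambda\geq M\}$ in one stroke (indeed, any $M>0$ works, with the constant depending on $M$), while the paper needs to patch the remaining sector $\{\Re\lambda\geq M,\ |\Im\lambda|>m_0\Re\lambda\}$ via the imaginary-part estimate.
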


\begin{proof}
\smartqed
Multiplying the second equation by $\bar u$ we obtain
\begin{equation*}
	\bigl(\tau\lambda^2 +\lambda b(x) + \tau \bigr)|u|^2+|\bar{u}_x|^2-(u_x \bar{u})_x
		=(b(x)+\tau\lambda)\bar u\phi+\bar u\psi.
\end{equation*}
Integrating in $\R$ and taking real and imaginary parts, we infer
\begin{equation}\label{realandimag}
	\begin{aligned}
	\bigl(\tau(\Re\lambda)^2-\tau(\Im\lambda)^2
				+b_0\Re\lambda + \tau\bigr)\|u\|_{{}_{L^2}}
		&\leq (b_1+\tau|\lambda|)\|\phi\|_{{}_{L^2}}+\|\psi\|_{{}_{L^2}},\\
	|\Im\lambda|\bigl(2\tau\Re\lambda+b_0\bigr)\|u\|_{{}_{L^2}}
		&\leq (b_1+\tau|\lambda|)\|\phi\|_{{}_{L^2}}+\|\psi\|_{{}_{L^2}},
	\end{aligned}
\end{equation}
Applying \eqref{somesup}, from the second inequality in \eqref{realandimag}, we infer
\begin{equation}\label{estimate1}
	\|u\|_{{}_{L^2}}
		\leq \frac{1}{b_0}\left(\tau+\frac{b_1}{\theta_0}\right)\|\phi\|_{{}_{L^2}}
			 +\frac{1}{b_0\,\theta_0}\,\|\psi\|_{{}_{L^2}}
\end{equation}
for any $\lambda$ such that $|\Im\lambda|\geq \theta_0>0$ and $\Re\lambda\geq 0$.

Using relations \eqref{realandimag}, we deduce
\begin{equation*}
	\begin{aligned}
	\bigl(\tau(\Re\lambda)^2 +b_0\Re\lambda + \tau\bigr)\|u\|_{{}_{L^2}}
	&\leq (b_1+\tau|\lambda|)\|\phi\|_{{}_{L^2}}+\|\psi\|_{{}_{L^2}}
		+\tau(\Im\lambda)^2\|u\|_{{}_{L^2}}\\
	&\leq \frac{2\tau\Re\lambda+\tau|\Im\lambda|+b_0}{2\tau\Re\lambda+b_0}
			\left((b_1+\tau|\lambda|)\|\phi\|_{{}_{L^2}}+\|\psi\|_{{}_{L^2}}\right).
	\end{aligned}
\end{equation*}
For $\Re\lambda$ large and $|\Im\lambda|\leq m_0|\Re\lambda|$, there holds
\begin{equation*}
	\|u\|_{{}_{L^2}}\leq \frac{C}{\Re\lambda}\left(\|\phi\|_{{}_{L^2}}
		+\frac{1}{\Re\lambda}\|\psi\|_{{}_{L^2}}\right)
\end{equation*}
for some strictly positive constant $C>0$.
\qed
\end{proof}

Collecting the statements contained in Lemma \ref{lemma:vuprime} and Lemma \ref{lemma:uell2},
we deduce the following result.

\begin{proposition}
Given $0<b_0\leq b(x)\leq b_1$ for any $x\in\R$,
let $\cL_0$ be the operator defined in \eqref{defopc0} on the space $\cX = H^1 \times L^2$ with
dense domain $\cD = H^2 \times H^1$. Then,\par
{(i) } there exists $M>0$ such that
\begin{equation*}
	\{\lambda \in \C  \,:\,\Re\lambda\geq 0\}\setminus[0,M]\subseteq\rho(\cL_0),
\end{equation*}
where $\rho(\cL_0)$ is the resolvent set of $\cL_0$; and, \par
{(ii) } for any $\theta_0>0$, there exists a constant $C>0$ for which
\[
\|(\lambda-\cL_0)^{-1}\|\leq C
\] 
for any $\lambda$ such that either $\Re\lambda\geq M$ or $|\Im\lambda|\geq \theta_0>0$.
\end{proposition}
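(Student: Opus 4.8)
Here is a plan for proving the concluding Proposition.

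The plan is to obtain the Proposition by combining the a priori bounds of Lemmas \ref{lemma:vuprime} and \ref{lemma:uell2} with an independent solvability statement for the resolvent equation. The two lemmas already carry the analytically substantial estimates, so what remains is a reduction of the resolvent system to a scalar elliptic problem, an application of the Lumer-Philips theorem, and a standard continuation-in-$\lambda$ argument.

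First I would settle the surjectivity of $\lambda-\cL_0$ for $\lambda$ real and non-negative, which is exactly what is needed to complete the Lumer-Philips argument begun earlier in this appendix. Eliminating $v=\lambda u-\phi$ from system \eqref{reszerospeed}, the resolvent equation $(\lambda-\cL_0)(u,v)^\top=(\phi,\psi/\tau)^\top$ becomes equivalent to the scalar equation $-u_{xx}+\bigl(\tau\lambda^2+\lambda b(x)+\tau\bigr)u=\psi+(\tau\lambda+b(x))\phi$ on $\R$, whose right-hand side lies in $L^2$. For $\lambda\ge 0$ the coefficient $\tau\lambda^2+\lambda b(x)+\tau$ is bounded and bounded below by $\tau>0$, so the associated sesquilinear form on $H^1(\R)$ is bounded and coercive; Lax-Milgram together with the usual elliptic bootstrap produces a unique $u\in H^2$, whence $v=\lambda u-\phi\in H^1$ and $(u,v)^\top\in\cD$. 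Thus $\lambda-\cL_0$ is onto for every real $\lambda\ge 0$. Together with the dissipativity of $\cL_0$ on $\cX$ and the density of $\cD$ in $\cX$ noted above, the Lumer-Philips theorem then shows that $\cL_0$ generates a $C_0$-semigroup of contractions, so that $\{\Re\lambda>0\}\subset\rho(\cL_0)$.

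Next I would assemble the resolvent estimate. Let $M>0$ be the constant from Lemma \ref{lemma:uell2} and fix any $\theta_0>0$. If $\Re\lambda\ge 0$ and either $\Re\lambda\ge M$ or $|\Im\lambda|\ge\theta_0$, then $|\lambda|\ge\min\{M,\theta_0\}>0$, so Lemma \ref{lemma:vuprime} applies with $r=\min\{M,\theta_0\}$; consequently, for such $\lambda$, every solution $(u,v)^\top\in\cD$ of the resolvent equation satisfies, by Lemma \ref{lemma:uell2} and then Lemma \ref{lemma:vuprime}, $\|u\|_{L^2}+\|u_x\|_{L^2}+\|v\|_{L^2}\le C\bigl(\|\phi\|_{H^1}+\|\psi\|_{L^2}\bigr)$, that is, $\|(u,v)^\top\|_{\cX}\le C\,\|(\phi,\psi/\tau)^\top\|_{\cX}$, with $C=C(\tau,b_0,b_1,\theta_0)$ independent of $\lambda$ in that set (using that for fixed $\tau$ the norm $\|\cdot\|_{\cX}$ is equivalent to the product $H^1\times L^2$ norm). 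Choosing $(\phi,\psi)=0$ shows in passing that $\lambda-\cL_0$ is injective there.

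Finally, for part (i) it remains to place the punctured imaginary axis inside $\rho(\cL_0)$. Given $\beta_0\ne 0$, take $\theta_0:=|\beta_0|$: then the half-line $\{s+i\beta_0:s\ge 0\}$ lies in the set of the previous paragraph, so the uniform bound $\|(s+i\beta_0-\cL_0)^{-1}\|\le C(|\beta_0|)$ holds for all $s>0$; since $\rho(\cL_0)$ is open and a uniform resolvent bound prevents the boundary of $\rho(\cL_0)$ from being attained along a sequence (the usual perturbation argument via a Neumann series), letting $s\downarrow 0$ gives $i\beta_0\in\rho(\cL_0)$. Combined with $\{\Re\lambda>0\}\subset\rho(\cL_0)$ this yields $\{\Re\lambda\ge 0\}\setminus\{0\}\subset\rho(\cL_0)$, which contains the set in (i); and the estimate of the previous paragraph, now a genuine bound on $(\lambda-\cL_0)^{-1}$, is exactly (ii). The step requiring the most care is this last one: one must check that the continuation along each vertical ray is uniform, which holds precisely because the constant in Lemma \ref{lemma:uell2} depends on $\Im\lambda$ only through the threshold $\theta_0$; everything else is routine once Lemmas \ref{lemma:vuprime}--\ref{lemma:uell2} are in hand.
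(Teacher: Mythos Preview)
Your plan is correct and follows the paper's intended route: the paper offers no proof beyond the sentence ``Collecting the statements contained in Lemma~\ref{lemma:vuprime} and Lemma~\ref{lemma:uell2}, we deduce the following result,'' relying on the Lumer--Philips setup sketched earlier in the appendix. You supply two ingredients the paper leaves implicit: the surjectivity of $\lambda-\cL_0$ for real $\lambda\ge 0$ via the Lax--Milgram argument on the scalar reduction $-u_{xx}+(\tau\lambda^2+\lambda b(x)+\tau)u=\psi+(\tau\lambda+b(x))\phi$, and the Neumann-series continuation along horizontal rays to place the punctured imaginary axis in $\rho(\cL_0)$. Both additions are sound; the first is in fact necessary, since a priori bounds alone give injectivity and closed range but not surjectivity, a point the paper glosses over when it says the resolvent estimate ``illustrates how to prove that $\cL_0-\lambda$ is onto.'' Your version is thus a faithful completion of the paper's sketch rather than a different argument.
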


\def\cprime{$'$}

%
%
%
%
%

\begin{thebibliography}{10}

\bibitem{AGJ90}
{\sc J.~Alexander, R.~Gardner, and C.~K. R.~T. Jones}, {\em A topological
  invariant arising in the stability analysis of travelling waves}, J. Reine
  Angew. Math. \textbf{410} (1990), pp.~167--212.

\bibitem{AlCa79}
{\sc S.~M. Allen and J.~W. Cahn}, {\em A microscopic theory for antiphase
  boundary motion and its application to antiphase domain coarsening}, Acta
  Metallurgica \textbf{27} (1979), no.~6, pp.~1085--1095.

\bibitem{BrJoK14}
{\sc J.~C. Bronski, M.~A. Johnson, and T.~Kapitula}, {\em An instability index
  theory for quadratic pencils and applications}, Comm. Math. Phys.
  \textbf{327} (2014), no.~2, pp.~521--550.

\bibitem{Cop78}
{\sc W.~A. Coppel}, {\em Dichotomies in Stability Theory}, no.~629 in Lecture
  Notes in Mathematics, Springer-Verlag, New York, 1978.

\bibitem{Eng2}
{\sc H.~Engler}, {\em Relations between travelling wave solutions of
  quasilinear parabolic equations}, Proc. Amer. Math. Soc. \textbf{93} (1985),
  no.~2, pp.~297--302.

\bibitem{Fol17}
{\sc R.~Folino}, {\em Slow motion for a hyperbolic variation of {A}llen-{C}ahn
  equation in one space dimension}, J. Hyperbolic Differ. Equ. \textbf{14}
  (2017), no.~1, pp.~1--26.

\bibitem{FLM17}
{\sc R.~Folino, C.~Lattanzio, and C.~Mascia}, {\em Metastable dynamics for
  hyperbolic variations of the {A}llen-{C}ahn equation}, Commun. Math. Sci.
  \textbf{15} (2017), no.~7, pp.~2055--2085.

\bibitem{GZ98}
{\sc R.~A. Gardner and K.~Zumbrun}, {\em The gap lemma and geometric criteria
  for instability of viscous shock profiles}, Comm. Pure Appl. Math.
  \textbf{51} (1998), no.~7, pp.~797--855.

\bibitem{GiKe15}
{\sc B.~H. Gilding and R.~Kersner}, {\em On a nonlinear hyperbolic equation
  with a bistable reaction term}, Nonlinear Anal. \textbf{114} (2015),
  pp.~169--185.

\bibitem{Had1}
{\sc K.~P. Hadeler}, {\em Hyperbolic travelling fronts}, Proc. Edinburgh Math.
  Soc. (2) \textbf{31} (1988), no.~1, pp.~89--97.

\bibitem{Hame99}
{\sc F.~Hamel}, {\em Formules min-max pour les vitesses d'ondes progressives
  multidimensionnelles}, Ann. Fac. Sci. Toulouse Math. (6) \textbf{8} (1999),
  no.~2, pp.~259--280.

\bibitem{HaeMa1}
{\sc J.~H{\"a}rterich and C.~Mascia}, {\em Front formation and motion in
  quasilinear parabolic equations}, J. Math. Anal. Appl. \textbf{307} (2005),
  no.~2, pp.~395--414.

\bibitem{Holm1}
{\sc E.~E. Holmes}, {\em Is diffusion too simple? {C}omparisons with a
  telegraph model of dispersal}, American Naturalist \textbf{142} (1993),
  no.~5, pp.~779--796.

\bibitem{IDRWZB95}
{\sc G.~Iz{\'u}s, R.~Deza, O.~Ram{\'{\i}}rez, H.~S. Wio, D.~H. Zanette, and
  C.~Borzi}, {\em Global stability of stationary patterns in bistable
  reaction-diffusion systems}, Phys. Rev. E (3) \textbf{52} (1995), no.~1, part
  A, pp.~129--136.

\bibitem{KaPro13}
{\sc T.~Kapitula and K.~Promislow}, {\em Spectral and dynamical stability of
  nonlinear waves}, vol.~185 of Applied Mathematical Sciences, Springer, New
  York, 2013.

\bibitem{KS98}
{\sc T.~Kapitula and B.~Sandstede}, {\em Stability of bright solitary-wave
  solutions to perturbed nonlinear {S}chr\"odinger equations}, Phys. D
  \textbf{124} (1998), no.~1-3, pp.~58--103.

\bibitem{Kat80}
{\sc T.~Kato}, {\em Perturbation Theory for Linear Operators}, Classics in
  Mathematics, Springer-{V}erlag, {N}ew {Y}ork, {S}econd~ed., 1980.

\bibitem{KoMi14}
{\sc R.~Koll{\'a}r and P.~D. Miller}, {\em Graphical {K}rein signature theory
  and {E}vans-{K}rein functions}, SIAM Rev. \textbf{56} (2014), no.~1,
  pp.~73--123.

\bibitem{LMPS16}
{\sc C.~Lattanzio, C.~Mascia, R.~G. Plaza, and C.~Simeoni}, {\em Analytical and
  numerical investigation of traveling waves for the {A}llen-{C}ahn model with
  relaxation}, Math. Models Methods Appl. Sci. \textbf{26} (2016), no.~5,
  pp.~931--985.

\bibitem{LaSu1}
{\sc Y.~Latushkin and A.~Sukhtayev}, {\em The algebraic multiplicity of
  eigenvalues and the {E}vans function revisited}, Math. Model. Nat. Phenom.
  \textbf{5} (2010), no.~4, pp.~269--292.

\bibitem{Lbr67a}
{\sc H.~M. Lieberstein}, {\em On the {H}odgkin-{H}uxley partial differential
  equation}, Math. Biosci. \textbf{1} (1967), no.~1, pp.~45--69.

\bibitem{Markus88}
{\sc A.~S. Markus}, {\em Introduction to the spectral theory of polynomial
  operator pencils}, vol.~71 of Translations of Mathematical Monographs,
  American Mathematical Society, Providence, RI, 1988.

\bibitem{MZ02}
{\sc C.~Mascia and K.~Zumbrun}, {\em Pointwise {G}reen's function bounds and
  stability of relaxation shocks}, Indiana Univ. Math. J. \textbf{51} (2002),
  no.~4, pp.~773--904.

\bibitem{McKe70}
{\sc H.~P. McKean, Jr.}, {\em Nagumo's equation}, Advances in Math. \textbf{4}
  (1970), pp.~209--223.

\bibitem{MFF1}
{\sc V.~M{\'e}ndez, J.~Fort, and J.~Farjas}, {\em Speed of wave-front solutions
  to hyperbolic reaction-diffusion equations}, Phys. Rev. E (3) \textbf{60}
  (1999), no.~5, part A, pp.~5231--5243.

\bibitem{Mik94}
{\sc A.~S. Mikha{\u\i}lov}, {\em Foundations of synergetics {I}. Distributed
  active systems}, vol.~51 of Springer Series in Synergetics, Springer-Verlag,
  Berlin, second~ed., 1994.

\bibitem{MurI3ed}
{\sc J.~D. Murray}, {\em Mathematical biology {I}. An introduction}, vol.~17 of
  Interdisciplinary Applied Mathematics, Springer-Verlag, New York, third~ed.,
  2002.

\bibitem{NAY62}
{\sc J.~Nagumo, S.~Arimoto, and S.~Yoshizawa}, {\em An active pulse
  transmission line simulating nerve axon}, Proc. IRE \textbf{50} (1962),
  no.~10, pp.~2061--2070.

\bibitem{Pal1}
{\sc K.~J. Palmer}, {\em Exponential dichotomies and transversal homoclinic
  points}, J. Differential Equations \textbf{55} (1984), no.~2, pp.~225--256.

\bibitem{Pal2}
{\sc K.~J. Palmer}, {\em Exponential
  dichotomies and {F}redholm operators}, Proc. Amer. Math. Soc. \textbf{104}
  (1988), no.~1, pp.~149--156.

\bibitem{Pazy83}
{\sc A.~Pazy}, {\em Semigroups of linear operators and applications to partial
  differential equations}, vol.~44 of Applied Mathematical Sciences,
  Springer-Verlag, New York, 1983.

\bibitem{Per1}
{\sc L.~Perko}, {\em Rotated vector fields}, J. Differential Equations
  \textbf{103} (1993), no.~1, pp.~127--145.

\bibitem{ProWe84}
{\sc M.~H. Protter and H.~F. Weinberger}, {\em Maximum principles in
  differential equations}, Springer-Verlag, New York, 1984.
\newblock Corrected reprint of the 1967 original.

\bibitem{ReRo04}
{\sc M.~Renardy and R.~C. Rogers}, {\em An introduction to partial differential
  equations}, vol.~13 of Texts in Applied Mathematics, Springer-Verlag, New
  York, second~ed., 2004.

\bibitem{Rott11}
{\sc J.~Rottmann-Matthes}, {\em Linear stability of traveling waves in
  first-order hyperbolic {PDE}s}, J. Dynam. Differential Equations \textbf{23}
  (2011), no.~2, pp.~365--393.

\bibitem{Rott12a}
{\sc J.~Rottmann-Matthes}, {\em Stability and
  freezing of nonlinear waves in first order hyperbolic {PDE}s}, J. Dynam.
  Differential Equations \textbf{24} (2012), no.~2, pp.~341--367.

\bibitem{San02}
{\sc B.~Sandstede}, {\em Stability of travelling waves}, in Handbook of
  dynamical systems, Vol. 2, B.~Fiedler, ed., North-Holland, Amsterdam, 2002,
  pp.~983--1055.

\bibitem{We10}
{\sc H.~Weyl}, {\em \"{U}ber gew\"ohnliche {D}ifferentialgleichungen mit
  {S}ingularit\"aten und die zugeh\"origen {E}ntwicklungen willk\"urlicher
  {F}unktionen}, Math. Ann. \textbf{68} (1910), no.~2, pp.~220--269.

\bibitem{Zum04}
{\sc K.~Zumbrun}, {\em Stability of large-amplitude shock waves of compressible
  {N}avier-{S}tokes equations}, in Handbook of mathematical fluid dynamics.
  Vol. III, S.~Friedlander and D.~Serre, eds., North-Holland, Amsterdam, 2004,
  pp.~311--533.

\end{thebibliography}
%

\end{document}